\title{\textbf{LIMITS OF THE BOUNDARY OF RANDOM PLANAR MAPS}}
\author{Loïc Richier%
\thanks{CMAP, \'Ecole polytechnique, Palaiseau, France. Email: \texttt{loic.richier@polytechnique.edu}.\newline This work was partially accomplished at UMPA, \'Ecole Normale Supérieure de Lyon and supported by the grant ANR-14-CE25-0014 (ANR GRAAL).}}
\date{\today}
\newcommand{\m}{\mathbf{m}}
\newcommand{\ms}{\widehat{\m}}
\newcommand{\Vm}{\mathrm{V}(\m)}
\newcommand{\Em}{\mathrm{E}(\m)}
\newcommand{\Fm}{\mathrm{F}(\m)}
\newcommand{\Bm}{\partial\m}
\newcommand{\M}{\mathcal{M}}
\newcommand{\Ms}{\widehat{\M}}
\newcommand{\Mh}{\widehat{M}}
\newcommand{\Mp}{\M^{\bullet}}
\newcommand{\Z}{\mathbb{Z}}
\newcommand{\q}{\mathsf{q}}
\newcommand{\N}{\mathbb{N}}
\newcommand{\R}{\mathbb{R}}
\newcommand{\wq}{w_{\q}}
\newcommand{\dm}{\mathrm{d}_{\m}}
\newcommand{\Zq}{Z_{\q}}
\newcommand{\Zqu}{Z_{\q(u)}}
\newcommand{\Pq}{\mathbb{P}^{\bullet}_{\q}}
\newcommand{\Pqk}{\mathbb{P}^{(k)}_{\q}}
\newcommand{\Pqks}{\widehat{\mathbb{P}}^{(k)}_{\q}}
\newcommand{\Pqr}{\mathbb{P}_{\q,\rq}}
\newcommand{\Eq}{\mathbb{E}^\bullet_{\q}}
\newcommand{\fq}{f_{\q}}
\newcommand{\tr}{\mathbf{t}}
\newcommand{\Tr}{\mathbf{T}}
\newcommand{\Trs}{\mathscr{T}}
\newcommand{\Tinf}{\Tr_\infty}
\newcommand{\T}{\mathcal{T}}
\newcommand{\Tloc}{\T_\textup{loc}}
\newcommand{\tb}{\tr_{\bullet}}
\newcommand{\tw}{\tr_{\circ}}
\newcommand{\BDG}{\Phi_{\textup{BDG}}}
\newcommand{\JS}{\Phi_{\textup{JS}}}
\newcommand{\TC}{\Phi_{\textup{TC}}}
\newcommand{\GWn}{\mathsf{GW}_{\nu}}
\newcommand{\nub}{\nu_{\bullet}}
\newcommand{\nuw}{\nu_{\circ}}
\newcommand{\mnu}{m_{\nu}}
\newcommand{\snu}{\sigma^2_{\nu}}
\newcommand{\snub}{\sigma^2_{\nub}}
\newcommand{\GWnn}{\mathsf{GW}_{\nuw,\nub}}
\newcommand{\GWr}{\mathsf{GW}_{\rho}}
\newcommand{\rhob}{\rho_{\bullet}}
\newcommand{\rhow}{\rho_{\circ}}
\newcommand{\mrhob}{m_{\rho_{\bullet}}}
\newcommand{\mrhow}{m_{\rho_{\circ}}}
\newcommand{\mrho}{m_{\rho}}
\newcommand{\GWrr}{\mathsf{GW}_{\rhow,\rhob}}
\newcommand{\mub}{\mu_{\bullet}}
\newcommand{\mmu}{m_{\mu}}
\newcommand{\smu}{\sigma^2_{\mu}}
\newcommand{\On}{O(n)}
\newcommand{\Fs}{\widehat{F}}
\newcommand{\rsq}{\widehat{r_{\q}}}
\newcommand{\ls}{\widehat{\ell}}
\newcommand{\lb}{\bar{\ell}}
\newcommand{\cs}{\widehat{c}}
\newcommand{\lt}{\mathbf{l}}
\newcommand{\LT}{\mathcal{L}}
\newcommand{\Loop}{\textup{Loop}}
\newcommand{\Tree}{\textup{Tree}}
\newcommand{\Treeb}{\textup{\textbf{Tree}}}
\newcommand{\Loopb}{\textup{\textbf{Loop}}}
\newcommand{\Loopbb}{\overline{\Loopb}}
\newcommand{\Scoop}{\textup{Scoop}}
\newcommand{\dgh}{d_{\textup{GH}}}
\newcommand{\Lt}{\mathbf{L}}
\newcommand{\Lts}{\mathscr{L}}
\newcommand{\Linf}{\Lt_\infty}
\newcommand{\e}{\mathbbm{e}}
\newcommand{\dloc}{d_{\textup{loc}}}
\newcommand{\B}{\mathbf{B}}
\newcommand{\Bl}{\mathbf{B}^\leftarrow}
\newcommand{\Blr}{\mathbf{B}^\leftrightarrow}
\newcommand{\uh}{\widehat{u}}
\newcommand{\Twb}{T^{\circ,\bullet}}
\newcommand{\Tinfwb}{\Tinf^{\circ,\bullet}}
\newcommand{\Minf}{\mathbf{M}_\infty}
\newcommand{\Core}{\textup{Core}}
\newcommand{\IBHPM}{\textup{\textsf{IBHPM}}}
\newcommand{\Cut}{\textup{Cut}}
\newcommand{\Wm}{W_{-1}}
\newcommand{\qd}{\mathbf{q}}
\newcommand{\bell}{\boldsymbol{\ell}}
\newcommand{\Pnghk}{\mathbf{P}^{(k)}_{(n;g,h)}}
\renewcommand{\rq}{r_{\q}}
\newtheorem{Th}{\bf Theorem}[section]
\newtheorem{Prop}[Th]{\bf Proposition}
\newtheorem{Lem}[Th]{\bf Lemma}
\newtheorem{Cor}[Th]{\bf Corollary}
\theoremstyle{definition}
\newtheorem{Def}[Th]{\bf Definition}
\newtheorem{Rk}[Th]{\bf Remark}
\newtheorem*{Ack}{\bf Acknowledgements}
\newtheorem*{Not}{\bf Notation}
\begin{document}

\maketitle

\begin{abstract}
	We discuss asymptotics for the boundary of critical Boltzmann planar maps under the assumption that the distribution of the degree of a typical face is in the domain of attraction of a stable distribution with parameter $\alpha \in (1,2)$. First, in the dense phase corresponding to $\alpha\in(1,3/2)$, we prove that the scaling limit of the boundary is the random stable looptree with parameter $1/(\alpha-1/2)$. Second, we show the existence of a phase transition through local limits of the boundary: in the dense phase, the boundary is tree-like, while in the dilute phase corresponding to $\alpha\in(3/2,2)$, it has a component homeomorphic to the half-plane. As an application, we identify the limits of loops conditioned to be large in the rigid $\On$ loop model on quadrangulations, proving thereby a conjecture of Curien \& Kortchemski. 
\end{abstract}

\section{Introduction}\label{sec:Introduction}

The purpose of this work is to investigate local limits, in the sense of Angel \& Schramm, and scaling limits, in the Gromov-Hausdorff sense, of the boundary of bipartite Boltzmann planar maps conditioned to have a large perimeter. 

\medskip

\noindent\textbf{Model and motivation.} Given a sequence $\q=(q_1,q_2,\ldots)$ of nonnegative real numbers and a planar map $\m$ which is bipartite (i.e., with faces of even degree), the associated Boltzmann weight is
\[\wq(\m):=\prod_{f \in \textup{Faces}(\m)}q_{\deg(f)/2}.\] The sequence $\q$ is admissible if these weights form a finite measure on the set of rooted bipartite maps (with a distinguished oriented edge) that we call the Boltzmann measure with weight $\q$. We also say that $\q$ is critical if moreover the expected squared number of vertices of the map is infinite under this measure.

The scaling limits of Boltzmann maps conditioned to have a large number of faces (or vertices) have attracted a lot of attention. The first model to be considered was the uniform measure on $2p$-angulations, in which all faces have the same degree $2p$. In this case, Le~Gall~\cite{le_gall_topological_2007} proved the subsequential convergence towards a random metric space called the \textit{Brownian map}, first introduced by Marckert \& Mokkadem in~\cite{marckert_limit_2006} and whose distribution has been characterized later by Le Gall \cite{le_gall_uniqueness_2013} and Miermont \cite{miermont_brownian_2013}. This result has been extended by Le Gall \cite{le_gall_uniqueness_2013} to critical sequences $\q$ such that the degree of a typical face has exponential moments (while the first results on this model were obtained by Marckert \& Miermont \cite{marckert_invariance_2007}). The result also holds for critical sequences $\q$ such that the degree of a typical face has a finite variance, as shown in the recent work \cite{marzouk_scaling_2016} (such a sequence is called \textit{generic critical}, see Section \ref{sec:BoltzmannDistribution} for precise definitions). Convergence towards the Brownian map has also been established in the non-bipartite case in~\cite{miermont_invariance_2006,miermont_radius_2008}. All these results demonstrate the universality of the Brownian map, whose geometry is now well understood \cite{le_gall_scaling_2008,le_gall_geodesics_2010}. 

For a different behaviour to arise, Le Gall \& Miermont suggested in \cite{le_gall_scaling_2011} to assume, besides criticality, that the degree of a typical face is in the domain of attraction of a stable law with parameter $\alpha\in(1,2)$. The weight sequence $\q$ is then called \textit{non-generic critical} with parameter $\alpha$. Under slightly stronger assumptions, they proved the (subsequential) convergence towards a one-parameter family of random metric spaces called the \textit{stable maps} with parameter $\alpha$. These are supposed to be very different from the Brownian map because of large faces that remain present in the scaling limit. Their duals have been recently studied in~\cite{budd_geometry_2017,bertoin_martingales_2016}, but their geometry remains widely unknown. The stable maps are believed to undergo a phase transition at $\alpha=3/2$. In the regime $\alpha\in(1,3/2)$, called the \textit{dense phase}, the large faces of the map are supposed to be self-intersecting in the limit, while in the regime $\alpha\in(3/2,2)$, called the \textit{dilute phase}, they are supposed to be self-avoiding. The aim of this work is twofold: first, we identify the branching structure of the large faces in the dense phase via \textit{scaling limits}. Then, we establish the phase transition through \textit{local limits} of large faces.

\medskip

\noindent\textbf{Main results.} This paper adresses maps with a boundary, meaning that the face on the right of the root edge (the root face) is interpreted as the boundary $\Bm$ of the map $\m$. Precisely, we consider a Boltzmann map with weight $\q$ conditioned to have perimeter $2k$, say $M_k$, whose law is denoted by $\Pqk$. One can then interpret $\partial M_k$ as a typical face of degree $2k$ of a Boltzmann map. Our main result studies the scaling limit of $\partial M_k$, equipped with its graph distance.

\begin{Th}\label{th:ScalingDense}  Let $\q$ be a non-generic critical sequence with parameter $\alpha\in(1,3/2)$. For every $k\geq 0$, let $M_k$ be a map with law $\Pqk$. Then, there exists a slowly varying function $\Lambda$ such that in the Gromov-Hausdorff sense,
	\[\frac{\Lambda(k)}{ (2k)^{\alpha-1/2}}\cdot \partial M_k \underset{k\rightarrow \infty}{\overset{(d)}{\longrightarrow}} \Lts_{\beta}, \qquad \text{where} \qquad \beta:=\frac{1}{\alpha-\frac{1}{2}}\in(1,2)\]  and $\Lts_{\beta}$ is the random stable looptree with parameter $\beta$.
\end{Th}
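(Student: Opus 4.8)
The plan is to transfer the boundary structure of $M_k$ to a Galton--Watson-type tree via a Bouttier--Di Francesco--Guitter-like bijection, and then invoke the known convergence of conditioned Galton--Watson trees (with offspring distribution in the domain of attraction of a stable law) to the stable looptree, following the philosophy of Curien--Kortchemski. The first step is to encode $\partial M_k$ by a mobile/labelled tree: under $\Pqk$, the map decomposes into the $2k$ boundary edges together with the ``pendant'' Boltzmann submaps hanging off each boundary vertex. The combinatorial data relevant to the boundary is a discrete \emph{bridge} (the sequence of perimeters of the successive connected pieces cut out along the boundary), and one shows that this bridge is, up to the usual cyclic/root-vertex subtlety, the contour/\L ukasiewicz encoding of a Galton--Watson forest whose offspring law $\mu$ has tail exponent governed by the non-generic criticality exponent $\alpha$. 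Concretely, the pointed Boltzmann measure gives a disintegration in which the number of children of a given boundary vertex has a law in the domain of attraction of a stable law with index $\beta = 1/(\alpha - 1/2)$; this is exactly the place where the exponent $\alpha - 1/2$ (and hence $\beta$) enters, and it should match the computations already present in earlier sections on the Boltzmann distribution.

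Second, with this encoding in hand, I would identify $\partial M_k$ with (the looptree associated to) a Galton--Watson tree conditioned to have $2k$ leaves (or $2k$ vertices, depending on the exact bijection), equipped with a metric that is \emph{quasi-isometric} to the intrinsic looptree distance: two boundary vertices are at graph distance in $\partial M_k$ comparable to the number of ``large cycles'' one must traverse between them, which is precisely the stable-looptree distance. The key technical estimate is that the pendant submaps do not create shortcuts: with high probability, no pendant Boltzmann map attached along the boundary has diameter of the same order as $(2k)^{\alpha-1/2}/\Lambda(k)$, so the graph distance on $\partial M_k$ is, up to $o(1)$ relative error, the distance one reads off from the boundary cycle-structure alone. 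This requires tail bounds on the perimeter and the diameter of a Boltzmann map with a large boundary, which are by now standard (Le Gall--Miermont type peeling estimates, or the volume/perimeter tail asymptotics attached to non-generic critical weights).

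Third, I would apply the scaling limit theorem for stable looptrees of Curien--Kortchemski: if $\tau_n$ is a Galton--Watson tree with offspring law in the domain of attraction of an $\beta$-stable law, conditioned to be large, then $n^{-1/\beta}\cdot \mathrm{Loop}(\tau_n) \to \Lts_\beta$ in the Gromov--Hausdorff sense (with a slowly varying correction coming from the norming sequence of the stable domain of attraction). Composing this with the quasi-isometry of the previous step, and tracking the slowly varying functions through the bijection (the function $\Lambda$ in the statement is built from the norming sequence of the offspring law together with the slowly varying factor in the Boltzmann partition function), yields the claimed convergence with the normalization $\Lambda(k)/(2k)^{\alpha - 1/2}$.

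The main obstacle, I expect, is the second step: controlling the pendant submaps and proving that the intrinsic graph metric on $\partial M_k$ is asymptotically the looptree metric. One must rule out long-range shortcuts through the bulk of the map — a priori two boundary vertices that are far along the boundary could be close in $M_k$ — and here the restriction to the dense phase $\alpha \in (1,3/2)$, equivalently $\beta \in (1,2)$, is essential: it is exactly the regime where the boundary is ``tree-like'' and the relevant Galton--Watson tree is a genuine $\beta$-stable tree with $\beta>1$, so that its looptree is a well-defined compact random metric space of the right Hausdorff dimension. Making the quasi-isometry constants uniform in $k$, and checking that the slowly varying functions compose correctly rather than just up to an unknown slowly varying factor, will be the delicate points.
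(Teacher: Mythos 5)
Your high-level strategy — encode the boundary by a Galton--Watson tree whose offspring distribution lies in the domain of attraction of a $\beta$-stable law and then invoke the Curien--Kortchemski invariance principle for looptrees — is indeed the route the paper takes. But there are two genuine gaps. First, the concern you flag as the ``main obstacle'' (ruling out shortcuts through the bulk via the pendant submaps) is a non-issue here, because $\partial M_k$ in the theorem is the boundary \emph{with its own graph distance}, not with the metric induced from $M_k$. The boundary is exactly the looptree $\Scoop(M_k)$, and the paper uses the identity $\Scoop(\m)=\Loopbb(\JS(\Treeb(\m)))$ to make the identification exact, so no quasi-isometry comparison between a ``bulk metric'' and a ``looptree metric'' is needed. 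The only small metric perturbation to control is between $\Loopb(T_k)$ and the contracted version $\Loopbb(T_k)$, which is bounded by the overall tree height $H(T_k)$ and is $o(k^{\alpha-1/2})$ in probability — a much softer statement than shortcut control.

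Second, and more importantly, you wave at the origin of the exponent $\beta=1/(\alpha-1/2)$ without identifying the mechanism. It does not come directly from the tail of the offspring law $\mu$ of the JS-image of the BDG mobile, whose tail exponent is $\alpha$; it comes from a different offspring law $\nu$ governing the tree of components of the boundary. That law is controlled by the partition functions $\Fs_k$ of Boltzmann maps with a \emph{simple} boundary, and the central technical content of the paper (Section 2.3) is to derive a singular expansion for the generating function $\Fs$ from that of $F$ via the Brezin--Itzykson--Parisi--Zuber relation $F(x)=\Fs(xF^2(x))$. Inverting $P(x)=xF^2(x)$ turns a singularity of order $(1-x/r_\q)^{\alpha-1/2}$ in $F$ into a singularity of order $(1-y/\rsq)^{1/(\alpha-1/2)}$ in $\Fs$, and this is precisely where $\beta$ is produced. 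Without that step, there is no reason the degree distribution of the faces of the boundary looptree should have tail index $\beta$ rather than $\alpha$. Your proposal also conflates ``number of children of a boundary vertex'' (which is geometric, i.e. $\nuw$) with the stable-tail quantity, which is actually the degree of the loops (governed by $\nub$).
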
 The stable looptrees $(\Lts_\beta : \beta\in (1,2))$ were introduced by Curien \& Kortchemski in \cite{curien_random_2014}, and can be seen as the stable trees of Duquesne \& Le Gall \cite{duquesne_random_2002,duquesne_probabilistic_2005} in which branching points are turned into topological circles. Stable looptrees also appear as the scaling limits of discrete \textit{looptrees} \cite{curien_random_2014}, which are informally collections of cycles glued along a tree structure. They have Hausdorff dimension $\beta$ a.s. \cite[Theorem 1.1]{curien_random_2014}.

The result of Theorem \ref{th:ScalingDense} covers the dense case. We believe that in the dilute and generic critical phases, the scaling limit of $\partial M_k$ is a circle. Furthermore, in the subcritical phase, the \textit{Continuum Random Tree} \cite{aldous_continuum_1991,aldous_continuum_1993} is expected to arise as a scaling limit. We will discuss this in Section \ref{sec:ScalingLimitsSection}.

\smallskip 

The local limits of Boltzmann maps with a boundary have been studied by Curien in \cite[Theorem 7]{curien_peeling_2016}. He proved that for any admissible weight sequence $\q$, we have in the local sense
\begin{equation}\label{eqn:LocalLimitBoltzmannMap}
	M_k \overset{(d)}{\underset{k\rightarrow \infty}{\longrightarrow}} \Minf.
\end{equation} The map $\Minf=\Minf(\q)$ is known as the Infinite Boltzmann Half-Planar Map with weight~$\q$ ($\q$-$\IBHPM$ for short). The infinite boundary $\partial \Minf$ of $\Minf$ is a.s.\ non-simple and has self-intersections, the cut vertices (or pinch points). Then, $\Minf$ can be decomposed into \textit{irreducible components}, that are maps with a simple boundary attached by cut vertices of $\partial \Minf$. When $\Minf$ has a unique infinite irreducible component, it is called the \textit{core}. For technical reasons, we rather deal with the \textit{scooped-out} map $\Scoop(\Minf)$, obtained by duplicating the edges of $\partial\Minf$ whose both sides belong to the root face. 

Naturally, $\Scoop(\Minf)$ is a local limit version of looptrees that we briefly describe (see Section \ref{sec:RandomInfiniteLooptrees} for details). Given a pair of offspring distributions $(\rhow,\rhob)$, an alternated two-type Galton-Watson tree is a random tree in which vertices at even (resp.\ odd) height have offspring distribution $\rhow$ (resp.\ $\rhob$) all independently of each other. As in the monotype case, we can make sense of such trees conditioned to survive, and denote the limiting infinite tree by $\Tinfwb=\Tinfwb(\rhow,\rhob)$. When $(\rhow,\rhob)$ is critical (meaning that the product of their means equals one), Stephenson established in \cite{stephenson_local_2016} that $\Tinfwb$ is a two-type version of \textit{Kesten's tree}, that is a.s.\ locally finite with a unique \textit{spine} (see \cite{kesten_subdiffusive_1986,lyons_probability_2016,abraham_local_2014} for details). Under suitable assumptions, we will prove in Proposition \ref{prop:ConvergenceJSTreeTwoType} that when $(\rhow,\rhob)$ is subcritical, $\Tinfwb$ has a.s.\ a unique vertex of infinite degree (at odd height). This phenomenon, known as \textit{condensation}, was first observed by Jonsson \& Stef\'ansson in \cite{jonsson_condensation_2010} (see also \cite{janson_simply_2012,abraham_local_2014-1,kortchemski_limit_2015}). We then define an infinite map $\Linf=\Linf(\rhow,\rhob)$ by taking each vertex at odd height in $\Tinfwb$ and connecting its neighbours by edges in cyclic order. Therefore, $\Linf$ has only finite faces in the critical regime, and a unique infinite face in the subcritical regime. Note that $\rhob$ dictates the size of the finite faces of $\Linf$. We can now state our local limit result.

\begin{Th}\label{th:LocalLimits}
	Let $\q$ be either subcritical, generic critical or non-generic critical with parameter $\alpha\in(1,2)$. For every $k\geq 0$, let $M_k$ be a map with distribution $\Pqk$ and let $\Minf$ be the $\q$-$\IBHPM$. Then, there exists probability measures $\nuw$ (geometric) and $\nub$ such that in the local sense
\[\Scoop(M_k) \overset{(d)}{\underset{k\rightarrow \infty}{\longrightarrow}}\Scoop(\Minf) \overset{(d)}{=} \Linf(\nuw,\nub).\] A phase transition is observed:
\begin{itemize}
	\item If $\q$ is subcritical or non-generic critical with parameter $\alpha\in(1,3/2)$, $(\nuw,\nub)$ is critical and $\Minf$ has only finite irreducible components.
	\item If $\q$ is non-generic critical with parameter $\alpha\in(3/2,2)$ or generic critical, $(\nuw,\nub)$ is subcritical and $\Minf$ has a well-defined core with an infinite simple boundary.
\end{itemize} Moreover, $\nub$ has finite variance if and only if $\q$ is subcritical. Otherwise, $\nub$ is in the domain of attraction of a stable distribution, with parameter $1/(\alpha-1/2)$ $($if $\alpha\in(1,3/2)$$)$, $\alpha-1/2$ $($if $\alpha\in(3/2,2)$$)$ or $3/2$ $($if $\q$ is generic critical$)$.
\end{Th}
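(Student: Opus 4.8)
The plan is to establish Theorem~\ref{th:LocalLimits} by reducing the local limit of $\Scoop(M_k)$ to a convergence statement about two-type Galton--Watson trees, via a Bouttier--Di Francesco--Guitter type bijection. First I would set up the correspondence between bipartite Boltzmann maps with a boundary of perimeter $2k$ and a two-type tree structure: white vertices encoding the non-root faces (with offspring distribution $\nuw$) and black vertices encoding the cut-vertex decomposition of the boundary (with offspring distribution $\nub$). The geometric law for $\nuw$ should fall out of the standard encoding of Boltzmann weights, and $\nub$ should be explicitly expressible in terms of the sequence $\q$ and the critical solution $\Zq$ of the Boltzmann equation. The key point is that conditioning the map to have perimeter $2k$ corresponds exactly to conditioning the associated two-type tree to have $k$ vertices at odd height in its first generation (or an analogous size conditioning), so that the scooped-out map $\Scoop(M_k)$ is read off the tree by the loop construction described before the theorem statement.

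Next I would invoke the local limit theory for conditioned two-type Galton--Watson trees: Stephenson's result \cite{stephenson_local_2016} in the critical case gives convergence of the conditioned tree to $\Tinfwb(\nuw,\nub)$ (a two-type Kesten tree), and Proposition~\ref{prop:ConvergenceJSTreeTwoType} handles the subcritical/condensation case, where the conditioned tree converges to a tree with a single vertex of infinite degree at odd height. Applying the loop construction, which is continuous for the local topology (a point I would need to check carefully, since the loop around an infinite-degree black vertex produces an infinite face), yields $\Scoop(M_k) \to \Linf(\nuw,\nub)$ and simultaneously identifies $\Scoop(\Minf)$ with $\Linf(\nuw,\nub)$, the latter because $\Minf$ itself is the local limit of $M_k$ by \eqref{eqn:LocalLimitBoltzmannMap} and $\Scoop$ is a local function. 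The dichotomy between only-finite irreducible components versus a well-defined core with infinite simple boundary then follows from whether $\Tinfwb$ is locally finite (critical $(\nuw,\nub)$) or has a vertex of infinite degree (subcritical $(\nuw,\nub)$), since an infinite irreducible component of $\Minf$ corresponds precisely to a black vertex of infinite degree.

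The remaining arithmetic is the criticality analysis of $(\nuw,\nub)$ and the tail behaviour of $\nub$. I would compute the means $\mnuw, \mnub$ in terms of $\q$ and show $\mnuw\mnub = 1$ exactly when $\q$ is subcritical or non-generic critical with $\alpha\in(1,3/2)$, and $\mnuw\mnub < 1$ when $\q$ is generic critical or non-generic with $\alpha\in(3/2,2)$; this should reduce to known estimates on the radius of convergence $\rq$ and the generating function $\fq$ from the Boltzmann map literature (e.g. the asymptotics of $f_\q$ near its critical point). The tail of $\nub$ is governed by the tail of the face-degree distribution of the Boltzmann map: since the degree of a typical face is in the domain of attraction of a stable law with index $\alpha$, the displacement in the encoding tree inherits a heavy tail, and a careful bookkeeping of the exponents gives the stated indices $1/(\alpha-1/2)$, $\alpha-1/2$, and $3/2$. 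The finite-variance characterization for the subcritical case is the statement that in that regime the face-degree distribution has a second moment after the Boltzmann reweighting.

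The main obstacle I expect is twofold: first, correctly identifying $\nub$ (and proving it is genuinely subcritical, not critical, in the dilute phase) requires delicate control of the generating functions at criticality, precisely the place where the $\alpha=3/2$ phase transition is hidden; second, verifying that the loop construction $\Tinfwb \mapsto \Linf$ is continuous for the local topology and commutes with taking limits is not purely formal when an infinite-degree vertex appears, since one must check that finite neighbourhoods of the root in $\Scoop(M_k)$ stabilize even as the perimeter-$2k$ boundary becomes infinite and develops (or fails to develop) a macroscopic cut vertex.
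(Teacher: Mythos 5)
Your overall strategy matches the paper's: reduce to a two-type Galton--Watson tree, invoke Stephenson's result in the critical case and Proposition~\ref{prop:ConvergenceJSTreeTwoType} in the condensation case, then push forward via the loop construction, with the criticality/tail analysis done through generating functions. Three points deserve correction or sharper attention.

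First, the relevant encoding is \emph{not} a Bouttier--Di Francesco--Guitter type bijection in which white vertices encode faces of $\m$. BDG (composed with $\JS$) is used in the paper only to classify the weight sequence (Lemma~\ref{lem:JSLaw}). The tree in Theorem~\ref{th:LocalLimits} is the \emph{tree of components} $\Treeb(M)$ of the scooped-out map: white vertices are boundary vertices of $\Scoop(\m)$, black vertices are the internal faces of $\Scoop(\m)$ (equivalently, the irreducible components of $\m$), and the conditioning is on the \emph{total} number of vertices being $2k+1$, since $\vert\Treeb(\m)\vert = \#\partial\m+1$. Its law under $\Pqr$ is identified in Proposition~\ref{prop:LawTreeComponents}, and crucially $\nub$ is given in terms of the partition function $\Fs_k$ for maps with a \emph{simple} boundary, not directly in terms of $\q$ and $\Zq$.

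Second, your sentence ``$\Scoop$ is a local function'' is false, and this is the one genuine gap. Determining whether a vertex of $\B_R(\m)$ lies on $\partial\m$ requires tracing the root face, which is not local when the perimeter is large. The paper therefore proves a separate continuity statement (Lemma~\ref{lem:ConvergenceBoundary}): if $\m_k\to\m_\infty$ locally and $\m_\infty$ is \emph{one-ended}, then $\Scoop(\m_k)\to\Scoop(\m_\infty)$. The proof is nontrivial: one shows by contradiction (using one-endedness) that only finitely many boundary vertices in contour order can enter a given ball. You do flag a continuity concern, but you attribute it to the loop construction $\Tinfwb\mapsto\Linf$ (which is handled by the left-right ball topology, Lemma~\ref{lem:ConvergenceLocalLooptrees}); the harder half is the $\Scoop$ continuity on maps.

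Third, the identification of $\nub$ and its tail is not just ``bookkeeping of exponents''. It goes through the Brézin--Itzykson--Parisi--Zuber relation $F(x)=\Fs(xF^2(x))$ (Lemma~\ref{lem:RelationFFs}), the singular expansion of $\Fs$ obtained by inverting regularly varying functions (Proposition~\ref{prop:SingularExpansionFs}), and Karamata Tauberian theory (Proposition~\ref{prop:PropertyNu}); the paper explicitly calls this ``the technical core'' and it is precisely where the exponent $\alpha-1/2$ versus $1/(\alpha-1/2)$ dichotomy is created. You correctly sense that ``delicate control of the generating functions'' is needed, but your description undersells it as verifying known asymptotics of $\fq$.
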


In the dense phase, $\Minf$ is tree-like, while in the dilute phase, it has an irreducible component homeomorphic to the half-plane on which finite maps are grafted (see Figure \ref{fig:PhaseTransition}). In the subcritical and dense phases, the $\q$-$\IBHPM$ can even be recovered from $\Linf$ and a collection of independent maps with a simple boundary, as shown in Proposition \ref{prop:LocalLimitSubcriticalDense}. Such collections of random combinatorial structures attached to a tree also appear in \cite{stufler_limits_2016}. In the dilute and generic critical regimes, we expect the core of $\Minf$ to be the local limit of Boltzmann maps constrained to have a simple boundary when the perimeter goes to infinity (see Section \ref{sec:LocalLimitsSubsection} for more on this). The critical parameter $\alpha=3/2$ plays a special role that we discuss in Section \ref{sec:TheA2Case}.

\begin{figure}[ht]
	\centering
	\includegraphics[scale=.74]{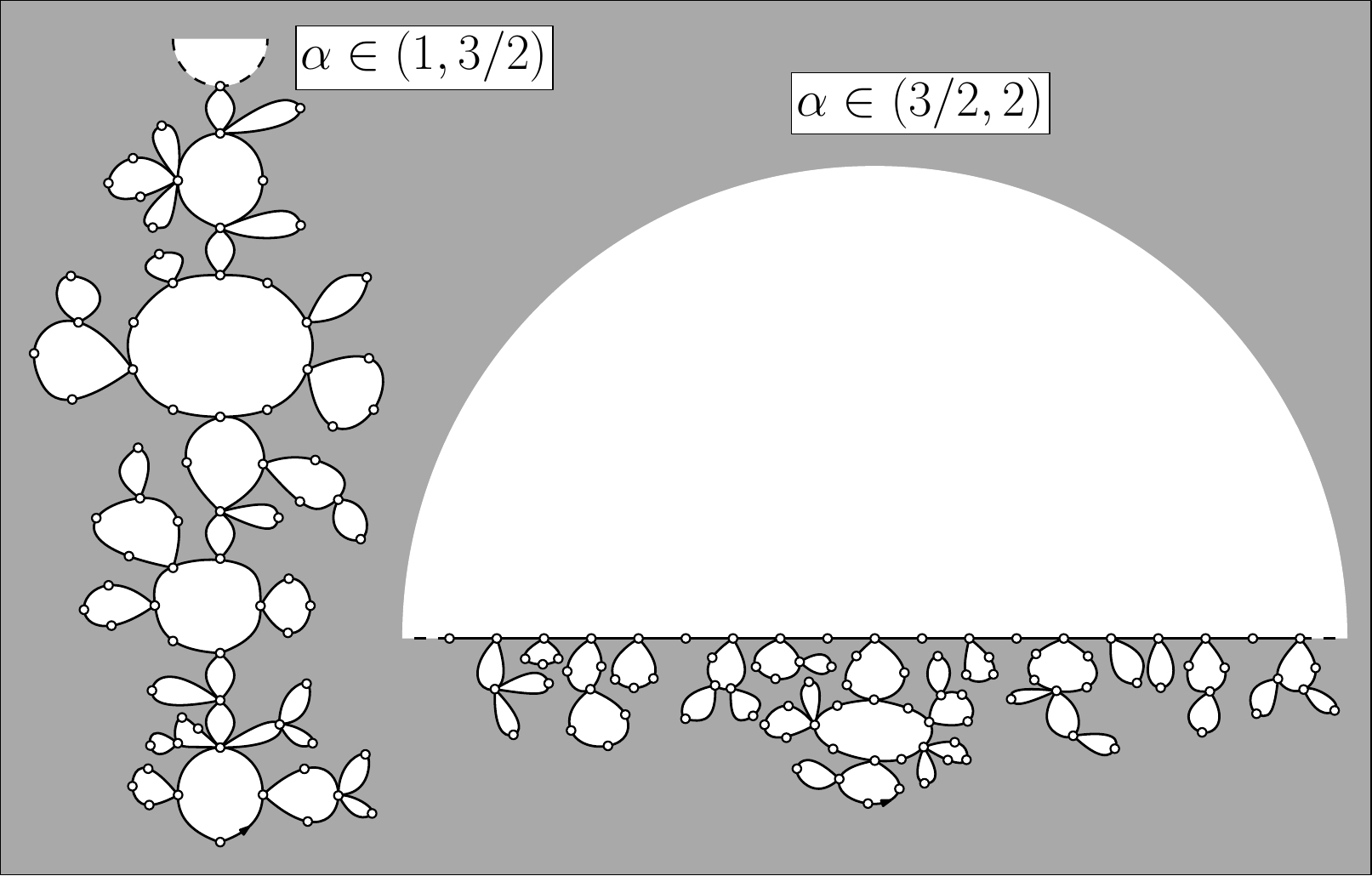}
	\caption{Schematic representation of $\partial \Minf$ for $\q$ non-generic critical with $\alpha\in(1,2)$.}
	\label{fig:PhaseTransition}
	\end{figure}

\medskip

\noindent\textbf{Applications to the rigid $\On$ loop model on quadrangulations.} The study of Boltzmann distributions such that $\q$ is non-generic critical with parameter $\alpha\in(1,2)$ is also motivated by the connection with statistical physics models on random maps. Here, we are interested in the \textit{rigid $\On$ loop model} on quadrangulations, studied by Borot, Bouttier \& Guitter in \cite{borot_recursive_2012}, see also \cite{chen_perimeter_2017,budd_peeling_2017}.

A \textit{loop-decorated} quadrangulation with a boundary $(\qd,\bell)$ is a planar map $\qd$ whose faces all are quadrangles (except the root face), together with a collection of disjoint closed simple paths $\bell=(\ell_1,\ell_2,\ldots)$ drawn on the dual of $\qd$, called \textit{loops} (which do not visit the root face). The \textit{loop configuration} $\bell$ is \textit{rigid} if all loops cross quadrangles through their opposite sides. Given $n\in (0,2)$ and $g,h\geq 0$, we define a measure on loop-decorated quadrangulations by \[W_{(n;g,h)}((\qd,\bell)):=g^{\#\mathrm{Faces}(\qd)-\vert \bell \vert}h^{\vert \bell \vert}n^{\#\bell},\] where $\vert \bell\vert$ is the total length of the loops and $\#\bell$ the number of loops. We say that the triplet $(n;g,h)$ is admissible if for every $k\geq 0$ this induces a probability measure $\Pnghk$ on loop-decorated quadrangulations with perimeter $2k$ (see Figure \ref{fig:OnWeights} for an illustration). The case $k=1$ corresponds to the rigid $\On$ model on quadrangulations of the sphere, by gluing the two edges of the boundary together.

\begin{figure}[ht]
	\centering
	\includegraphics[scale=.74]{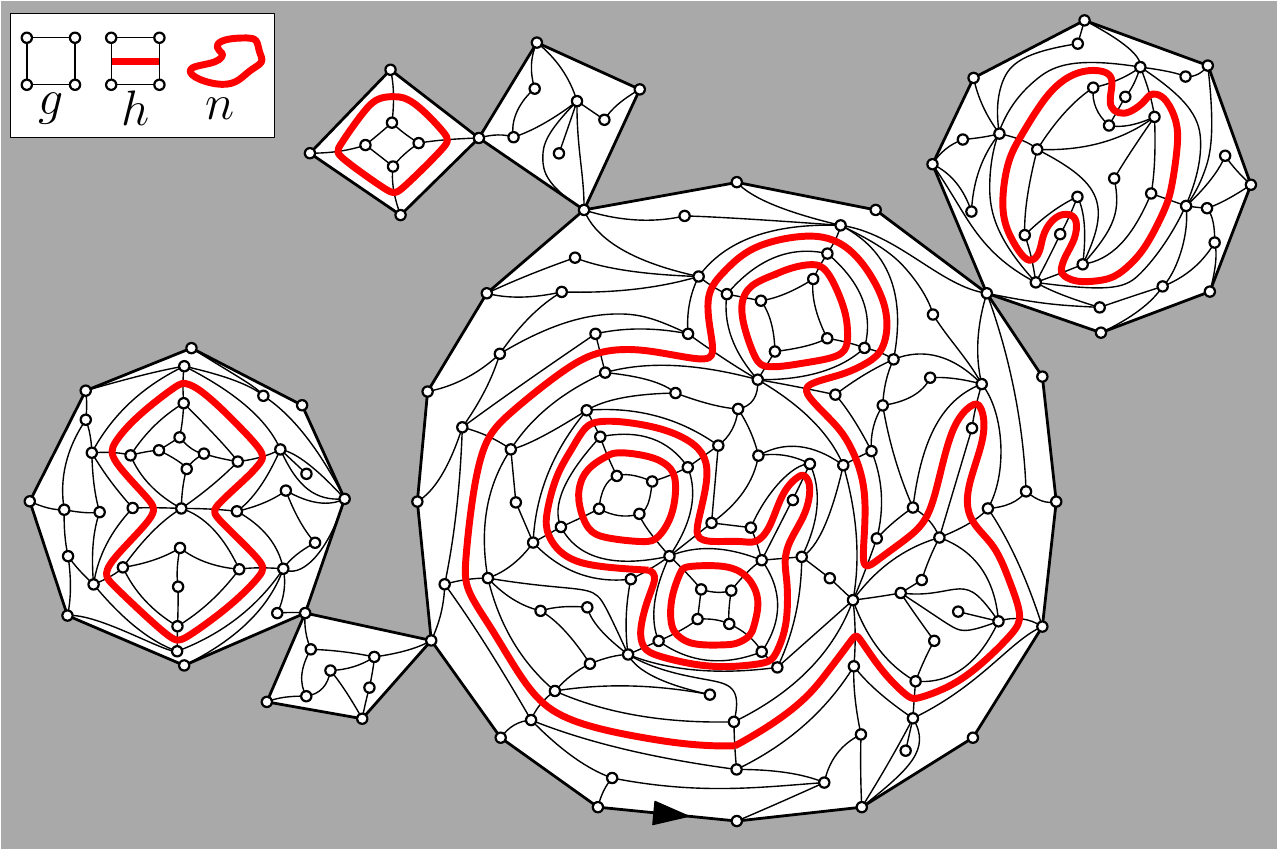}
	\caption{A rigid loop configuration $\bell$ on a quadrangulation with a boundary $\qd$.}
	\label{fig:OnWeights}
	\end{figure}

In \cite{borot_recursive_2012}, Borot, Bouttier \& Guitter introduced the \textit{gasket} of a loop-decorated quadrangulation, obtained by pruning the interior of the outermost loops (with respect to the root). They proved that under $\Pnghk$, the gasket is a Boltzmann map with law $\Pqk$, where $\q=\q(n;g,h)$ is the solution of \cite[Equation 2.3]{borot_recursive_2012}. This leads to a classification of the parameters $(n;g,h)$ in regimes depending of the type of the sequence $\q$. It has been argued in \cite{borot_recursive_2012} (and fully justified in \cite[Appendix]{budd_peeling_2017}) that the model admits a complete phase diagram shown in \cite[Figure 12]{borot_recursive_2012}. For every $n\in(0,2)$, there exists a critical line $h=h_c(n;g)$ that separates subcritical and ill-defined parameters. The regime changes along the critical line. There is a special point $(g^*(n),h^*(n))$ such that the parameters are non-generic critical with parameter $\alpha=3/2-\arccos(n/2)/\pi$ (dense) for $g<g^*$, and generic critical for $g>g^*$. The special point $(g^*,h^*)$ itself is non-generic critical with parameter $\alpha=3/2+\arccos(n/2)/\pi$ (dilute).

In this work, we are motivated by the study of the geometry of large loops in the rigid $\On$ model on quadrangulations. More generally, the interfaces in statistical physics models on maps are of great interest. In \cite{curien_percolation_2014}, Curien and Kortchemski studied percolation on uniform triangulations of the sphere. They proved that the boundary of a critical percolation cluster conditioned to be large admits as a scaling limit the random stable looptree with parameter $3/2$. They also conjectured that the whole family $(\Lts_\beta : \beta \in (1,2))$ appears as scaling limit of large loops in the $\On$ model on triangulations. The following application of Theorem \ref{th:ScalingDense} proves this conjecture for the rigid $\On$ model on quadrangulations.

\begin{Cor}\label{cor:ScalingLoops} Let $n\in(0,2)$, $g\in[0,g^*(n))$ and $h:=h_c(n;g)$. For every $k\geq 0$, let $(Q_k,L_k)$ be a loop-decorated quadrangulation with law $\Pnghk$. Then, there exists a constant $C=C(n,g,h)$ such that in the Gromov-Hausdorff sense,
	\[\frac{C}{ (2k)^{1/\beta}}\cdot \partial Q_k \underset{k\rightarrow \infty}{\overset{(d)}{\longrightarrow}} \Lts_{\beta}, \qquad \text{where} \quad  \beta:= \left. 1 \middle/ \left(1-\frac{1}{\pi} \arccos\left(\frac{n}{2}\right)\right)\right. \in(1,2).\]
\end{Cor}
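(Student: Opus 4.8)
The plan is to deduce Corollary~\ref{cor:ScalingLoops} from Theorem~\ref{th:ScalingDense} by identifying $\partial Q_k$ with the boundary of the gasket and invoking the result of Borot, Bouttier \& Guitter. Concretely, recall from the discussion above that under $\Pnghk$ the gasket $\Gasket(Q_k)$ of a loop-decorated quadrangulation $(Q_k,L_k)$ obtained by pruning the interiors of the outermost loops is a Boltzmann map with law $\Pqk$, where $\q=\q(n;g,h)$ solves \cite[Equation 2.3]{borot_recursive_2012}. The key combinatorial observation is that the pruning operation does not touch the root face, so $\partial Q_k=\partial\,\Gasket(Q_k)$ as rooted maps with a boundary of perimeter $2k$; in particular the graph distance on $\partial Q_k$ coincides with the graph distance on $\partial M_k$ for $M_k$ distributed according to $\Pqk$.

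The second ingredient is the parameter bookkeeping. For $n\in(0,2)$, $g\in[0,g^*(n))$ and $h=h_c(n;g)$, the classification recalled above gives that $\q(n;g,h)$ is non-generic critical with parameter $\alpha=3/2-\frac1\pi\arccos(n/2)\in(1,3/2)$, so we are precisely in the dense phase covered by Theorem~\ref{th:ScalingDense}. Substituting this value of $\alpha$, the exponent in Theorem~\ref{th:ScalingDense} becomes $\alpha-\tfrac12 = 1-\tfrac1\pi\arccos(n/2)$, and the looptree parameter becomes $\beta=1/(\alpha-\tfrac12)=1\big/\bigl(1-\tfrac1\pi\arccos(n/2)\bigr)$, which is exactly the value stated in the corollary and lies in $(1,2)$. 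Thus Theorem~\ref{th:ScalingDense} yields
\[
\frac{\Lambda(k)}{(2k)^{\alpha-1/2}}\cdot\partial Q_k \;\underset{k\to\infty}{\overset{(d)}{\longrightarrow}}\; \Lts_\beta,
\qquad (\alpha-\tfrac12)=1/\beta,
\]
for the slowly varying function $\Lambda$ produced there (depending on $\q$, hence on $(n,g,h)$).

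It remains to upgrade the slowly varying prefactor $\Lambda(k)$ to a genuine constant $C=C(n,g,h)$. This is where one must use more than the bare statement of Theorem~\ref{th:ScalingDense}: in the $\On$ setting the degree of a typical face of the gasket is known explicitly to be \emph{exactly} in the domain of attraction of an $\alpha$-stable law with a polynomial tail (no slowly varying correction), because the relevant generating function $\fq$ has an algebraic singularity by \cite{borot_recursive_2012}; one should check, tracing through the construction of $\Lambda$ in the proof of Theorem~\ref{th:ScalingDense}, that $\Lambda(k)$ then converges to a positive finite limit as $k\to\infty$. Setting $C$ to be (a suitable power of) that limit and using that scaling a metric space by $\Lambda(k)/(2k)^{1/\beta}$ versus by $C/(2k)^{1/\beta}$ differ by a factor tending to $1$, the convergence above transfers to the normalization $C/(2k)^{1/\beta}$ by Slutsky's lemma applied in the Gromov--Hausdorff topology. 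I expect this last step---pinning down that the slowly varying function is asymptotically constant in the $\On$ case---to be the only real point requiring care; the rest is a direct translation through the gasket decomposition and an arithmetic substitution of $\alpha$.
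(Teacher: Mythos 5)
Your proposal is correct and follows the same route as the paper: identify $\partial Q_k$ with the boundary of the gasket (which under $\Pnghk$ is a Boltzmann map with law $\Pqk$ for the explicit $\q(n;g,h)$ of Borot–Bouttier–Guitter), read off that $\q$ is non-generic critical with $\alpha=3/2-\arccos(n/2)/\pi$ in the dense phase, apply Theorem~\ref{th:ScalingDense}, and then invoke the $\On$-specific fact (the paper's Remark~\ref{rk:ConstantOn}) that in the setting of \cite{le_gall_scaling_2011,borot_recursive_2012} the slowly varying function $\Lambda$ may be replaced by a constant. Your identification of the constancy of $\Lambda$ as the one delicate point, and the Slutsky step to pass from $\Lambda(k)$ to $C$, are exactly what the paper leaves implicit.
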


Note that the value of $\beta$ fits the prediction of \cite{curien_percolation_2014}. We also obtain the local limits of large loops in the $\On$ model from Theorem \ref{th:LocalLimits}. These results are obtained by applying Theorems \ref{th:ScalingDense} and \ref{th:LocalLimits} to the gasket of the loop-decorated quadrangulation $(Q_k,L_k)$ (see also Remark \ref{rk:ConstantOn}). At first glance, they hold only for the boundary of loop-decorated quadrangulations. However, by the gasket decomposition, they apply to \textit{any} loop conditioned to be large in the rigid $\On$ loop model. To make it more concrete, on can choose any deterministic procedure to pick a loop in the rigid $\On$ loop model on quadrangulations of the sphere (e.g.\ the loop that is the closest to the root edge) and condition this loop to have perimeter $2k$. Then, the inner contour of this loop is the boundary of a loop-decorated quadrangulation with law $\Pnghk$ (see \cite{borot_recursive_2012,chen_perimeter_2017} for more details).

\medskip

\noindent\textbf{Overview and comments.} The paper is organized as follows. We first introduce a more general framework than \cite{le_gall_scaling_2011,borot_recursive_2012} for Boltzmann maps, by allowing slowly varying corrections. In Section \ref{sec:BoltzmannDistributionBoundary}, we extend known enumerative results to this family. The proofs of the main results rely on a decomposition of maps with a general boundary into a tree of maps with a simple boundary, inspired by \cite{curien_percolation_2014} and described in Section \ref{sec:StructureBoundary}. Then, we need enumerative results for bipartite maps with a simple boundary, which were unknown so far and are of independent interest. This is done in Section \ref{sec:BoltzmannDistributionSimpleBoundary}, by means of a second relation between maps with a general (resp.\ simple) boundary, and by using Tauberian theorems. This is a key feature of this work. 

This method is quite robust, and only needs estimates on the partition function of the model as an input. For this reason, we believe that our proofs can be adapted to more general statistical physics models on random maps for which Borot, Bouttier \& Guitter proved results similar to those of \cite{borot_recursive_2012}. For instance, general $\On$ loop models on triangulations with \textit{bending energy} \cite{borot_more_2012} or \textit{domain symmetry breaking} \cite{borot_loop_2012}. This last case covers in particular the \textit{Potts model} and \textit{Fortuin-Kasteleyn percolation} on general maps, that have been studied in \cite{berestycki_critical_2017,sheffield_quantum_2016,gwynne_scaling_2015-1,gwynne_scaling_2015,gwynne_scaling_2017,chen_basic_2017}. An interesting example is the critical Bernoulli percolation model on random triangulations, treated in \cite[Section 4.2, p.23]{borot_more_2012}. This corresponds to a $\On$ loop model on triangulations for $n=1$ and a suitable choice of the parameters. The asymptotics are similar to the quadrangular case, and we get the exponent $\beta=1/(1-\arccos(1/2)/\pi)=3/2$, which is consistent with the result of \cite{curien_percolation_2014}.

\begin{Not} Throughout this work, we use the notation $\N:=\{1,2,\ldots\}$ and $\Z_+:=\N\cup\{0\}$.\end{Not}

\section{Boltzmann distributions}\label{sec:BoltzmannLaws}

\subsection{Boltzmann distributions on bipartite maps}\label{sec:BoltzmannDistribution} 

\medskip

\noindent\textbf{Maps.} A \textit{planar map} is a proper embedding of a finite connected graph in the two-dimensional sphere $\mathbb{S}^2$, considered up to orientation-preserving homeomorphisms. The faces are the connected components of the complement of the embedding, and the degree $\deg(f)$ of a face $f$ is the number of its incident oriented edges. The sets of vertices, edges and faces of a (planar) map $\m$ are denoted by $\Vm$, $\Em$ and $\Fm$. For technical reasons, the maps we consider are always \textit{rooted}, which means that an oriented edge $e_*=(e_-,e_+)$, called the \textit{root edge}, is distinguished. The face $f_*$ incident on the right of the root edge is called the \textit{root face}. A map \textit{with a boundary} $\m$ is a map in which we consider the root face as an \textit{external face}, whose incident edges and vertices form the \textit{boundary} $\Bm$ of the map. The degree $\#\Bm$ of the external face is the \textit{perimeter} of the map, and non-root faces are called \textit{internal}.

In this paper, we only consider \textit{bipartite} maps, in which all face degrees are even. We denote by $\M$ the corresponding set, and by $\M_{k}$ be the set of (bipartite) maps with perimeter $2k$, for $k \geq 0$. The map $\dagger$ consisting of a single vertex is the only element of $\M_0$. We will also consider \textit{pointed} maps, which have a marked vertex $v_*$. A pointed bipartite map $\m$ such that $\dm (e_+,v_*) = \dm (e_-,v_*) +1$ is said to be \textit{positive}, and the corresponding set is denoted by $\Mp_+$ (by convention, $\dagger\in  \Mp_+$). Finally, $M$ stands for the identity mapping on $\M$.

\medskip

\noindent\textbf{Boltzmann distributions.} Given a \textit{weight sequence} $\q=(q_k : k\in \N)$ of nonnegative real numbers, the \textit{Boltzmann weight} of a bipartite map $\m$ is defined by
\begin{equation}\label{eqn:BoltzmannWeight}
	\wq(\m):=\prod_{f \in \Fm}q_{\deg(f)/2}.
\end{equation} By convention, we set $\wq(\dagger)=1$. This defines a $\sigma$-finite measure on $\Mp_+$ with total mass \begin{equation}\label{eqn:Zq}
	\Zq:=\wq\left(\Mp_+\right) \in [1,\infty].
\end{equation} A weight sequence $\q$ is \textit{admissible} if $\Zq<\infty$ (or equivalently if $\wq(\M)<\infty$, see \cite[Proposition 4.1]{bernardi_boltzmann_2017}). Then, the Boltzmann measure $\Pq$ is defined by
\[\Pq(\m):=\frac{\wq(\m)}{\Zq}, \quad \m\in\Mp_+.\] Following~\cite{marckert_invariance_2007}, we introduce the function 
\begin{equation}\label{eqn:fq}
	\fq(x):= \sum_{k=1}^{\infty}\binom{2k-1}{k-1}q_k x^{k-1}, \quad x\geq 0,
\end{equation} By \cite[Proposition 1]{marckert_invariance_2007}, a weight sequence $\q$ is admissible iff the equation 
	\begin{equation}\label{eqn:AdmissibilityQ}
		\fq(x)=1-\frac{1}{x}, \quad x>0
	\end{equation} has a solution. In that case, the smallest such solution is $\Zq$ and $\Zq^2\fq'(\Zq)\leq 1$. 
	
A classification of weight sequences was introduced in~\cite{marckert_invariance_2007,le_gall_scaling_2011}, which is closely related to the Bouttier-Di Francesco-Guitter bijection \cite{bouttier_planar_2004}. This bijection associates to every map $\m\in\Mp_+$ a plane tree $\BDG(\m)$ (together with labels on vertices at even height). The study is simplified by using additionally a bijection $\JS$ due to Janson and Stef\'ansson \cite[Section 3]{janson_scaling_2015}. This will be of independent interest, so we give a detailed presentation in Section \ref{sec:TreesAndJS}. We are interested in the application that associates to $\m\in \Mp_+$ the tree $\Phi(\m):=\JS(\BDG(\m))$. By \cite[Proposition 7]{marckert_invariance_2007} and \cite[Appendix A]{janson_scaling_2015} (see also Proposition \ref{prop:LawJSTransform}), we get the following.

\begin{Lem}\label{lem:JSLaw}
Let $\q$ be an admissible weight sequence. Under $\Pq$, the tree $\Phi(M)$ is a Galton-Watson tree with offspring distribution $\mu$ defined by
\[\mu(0)=1-\fq(\Zq) \quad \text{and} \quad \mu(k)=\Zq^{k-1}\binom{2k-1}{k-1}q_{k}, \quad k\in \N.\]
\end{Lem}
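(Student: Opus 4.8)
The plan is to trace the distribution of $\Phi(M) = \JS(\BDG(M))$ under $\Pq$ through the two bijections, relying on the known law of the labelled tree $\BDG(M)$ and on the explicit combinatorial description of the Janson--Stefánsson map. First I would recall the structure of $\BDG(\m)$ for $\m \in \Mp_+$: the Bouttier--Di Francesco--Guitter bijection produces a two-type mobile, i.e.\ a plane tree whose vertices at even height correspond to vertices of $\m$ and whose vertices at odd height correspond to (non-root) faces of $\m$, a face of degree $2k$ giving rise to a vertex with $k-1$ white children, together with an admissible labelling on the even-height vertices. By the classical computation of \cite{marckert_invariance_2007} (their Proposition 7), under the Boltzmann measure $\Pq$ the underlying two-type tree of $\BDG(M)$, forgetting labels, is an alternating two-type Galton--Watson tree: a white vertex has a $\mathrm{Geometric}$-type number of black children governed by $\fq(\Zq)$, and a black vertex coming from a face of degree $2k$ has $k-1$ white children with the weight $\Zq^{k-1}\binom{2k-1}{k-1}q_k$, suitably normalised so that $\Zq$ is precisely the normalising constant making this a probability offspring law. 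The labels are irrelevant for the statement, so I would note that $\Phi$ only depends on the shape of the mobile and discard them at this point.

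Next I would invoke the combinatorial description of $\JS$ from \cite{janson_scaling_2015} (to be recalled in detail in Section \ref{sec:TreesAndJS}): the Janson--Stefánsson transformation turns a two-type mobile into a single plane tree by a local re-wiring rule that converts each black vertex with $j$ children into a vertex of the new tree having $j$ children in a way that "transfers" the offspring count, while white vertices become leaves or internal vertices according to whether they had black children. The key algebraic fact, established in \cite[Appendix A]{janson_scaling_2015}, is that this bijection maps an alternating two-type $\mathsf{GW}$ tree with the two offspring laws above onto a monotype $\mathsf{GW}$ tree whose single offspring distribution $\mu$ is obtained by the "stacking" formula: $\mu(k)$ equals the weight of a black vertex with $k-1$ children, namely $\Zq^{k-1}\binom{2k-1}{k-1}q_k$ for $k \ge 1$, and $\mu(0) = 1 - \sum_{k\ge 1}\mu(k)$. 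Then I would check that $\sum_{k\ge 1}\mu(k) = \fq(\Zq)$ directly from the definition \eqref{eqn:fq} of $\fq$, which gives $\mu(0) = 1-\fq(\Zq)$ as claimed; admissibility \eqref{eqn:AdmissibilityQ} guarantees this is a genuine probability measure since $\fq(\Zq) = 1 - 1/\Zq \le 1$.

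The routine verification that the push-forward of a two-type $\mathsf{GW}$ law under $\JS$ is a monotype $\mathsf{GW}$ law with the stated offspring distribution is the content of \cite[Appendix A]{janson_scaling_2015} combined with Proposition \ref{prop:LawJSTransform}; I would cite it rather than reprove it. The only genuine point requiring care is the bookkeeping at the root — one must confirm that conditioning on $\m \in \Mp_+$ (the positive pointed bipartite maps) corresponds on the tree side to the unconditioned $\mathsf{GW}_\mu$ law and not to a size-biased or root-modified version, which follows because the root of the mobile is a white (vertex-type) corner and the BDG construction is set up so that the distinguished vertex $v_*$ and the global sign/labelling constraints decouple from the tree shape. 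I expect the main obstacle to be purely expository: assembling the precise statement of the $\JS$ rewiring and matching normalisations so that the alternating two-type offspring weights collapse exactly to the single sequence $\mu$, with no stray factor of $\Zq$ — but this is exactly what the cited references handle, so the proof reduces to quoting Lemma-level results from \cite{marckert_invariance_2007} and \cite{janson_scaling_2015} and performing the one-line check that $\sum_{k\ge 1}\mu(k)=\fq(\Zq)$.
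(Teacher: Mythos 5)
Your proposal is correct and follows the same route the paper takes: invoke~\cite[Proposition~7]{marckert_invariance_2007} for the alternating two-type Galton--Watson law of the BDG mobile under~$\Pq$, then push it through the Janson--Stef\'ansson bijection using~\cite[Appendix~A]{janson_scaling_2015}, which is exactly the content of Proposition~\ref{prop:LawJSTransform}. The paper in fact gives no standalone proof and simply cites these two results; your write-up adds the sanity check $\sum_{k\ge 1}\mu(k)=\fq(\Zq)$ and the rooting/conditioning remark, both of which are sound. One minor imprecision worth fixing: the normalising constant for the black offspring law is $\fq(\Zq)$ (so that $\nub(k-1)=\mu(k)/\fq(\Zq)$), not $\Zq$ itself; $\Zq$ enters through the admissibility equation $\fq(\Zq)=1-1/\Zq$, which is what forces $p=\fq(\Zq)=1-\Zq^{-1}$ to be the geometric parameter for the white offspring in Proposition~\ref{prop:LawJSTransform}.
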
 Recall that the offspring distribution $\mu$ is critical (resp.\ subcritical) iff it has mean $\mmu=1$ (resp.\ $\mmu<1$). Lemma \ref{lem:JSLaw} transfers to the generating function $G_{\mu}$ of $\mu$, which reads 
\begin{equation}\label{eqn:GFmuAsfq}
	G_{\mu}(s):=\sum_{k= 0}^\infty s^k \mu(k)= 1-\fq(\Zq)+s\fq(s\Zq), \quad s\in[0,1].
\end{equation}

The aforementioned classification of weight sequences can be rephrased as follows. 

\begin{Def}\label{def:CriticityGenericity}
	An admissible sequence $\q$ is critical if $\mu$ is critical, and subcritical otherwise. A critical sequence $\q$ is generic critical if $\mu$ has finite variance. Finally, a critical sequence $\q$ is \textit{non-generic critical} with parameter $\alpha\in(1,2)$ if there exists a slowly varying function $\ell$ such that $\mu([k,\infty))=\ell(k)\cdot k^{-\alpha}$.
\end{Def}

Recall that a positive function $\ell$ is slowly varying (at infinity) if it satisfies $\ell(\lambda x)/\ell(x)\rightarrow 1$ as $x\rightarrow \infty$, for every $\lambda>0$. We emphasize that Definition \ref{def:CriticityGenericity} is more general than that of~\cite{le_gall_scaling_2011}, which implies that the slowly varying function $\ell$ is asymptotically constant (and is also the framework in~\cite{borot_recursive_2012,budd_geometry_2017,bertoin_martingales_2016,curien_peeling_2016}).

\begin{Rk}\label{rk:InterpretationCriticalityGenericity} The classification can be translated in terms of $\Pq$ by properties of $\BDG$. Namely, $\Eq(\#\mathrm{V}(M))=\infty$ iff $\q$ is critical and $\mub(k):=\mu(k+1)/\fq(\Zq)$ is interpreted as the law of (half) the degree of a typical face of the map under $\Pq$, see \cite{bouttier_planar_2004} for more on this.\end{Rk}

We conclude by translating Definition \ref{def:CriticityGenericity} in terms of the Laplace transform $L_{\mu}$ of $\mu$. First, if $\q$ is subcritical, $\mu$ has finite mean $\mmu<1$ and
\begin{equation}\label{eqn:GFMuSubcritical}
	L_{\mu}(t):=G_{\mu}(e^{-t})=1-\mmu t + o(t) \quad \text{as } t\rightarrow 0^+.
\end{equation} When $\q$ is generic critical, $\mu$ has mean $\mmu=1$ and finite variance $\smu$ which yields 
\begin{equation}\label{eqn:GFMuGenericCritical}
	L_{\mu}(t)=1- t + \frac{\smu+1}{2}t^2 +o(t^2) \quad \text{as } t\rightarrow 0^+.
\end{equation} For $\q$ non-generic critical with $\alpha \in (1,2)$, Karamata's Abelian theorem \cite[Theorem 8.1.6]{bingham_regular_1989} gives
\begin{equation}\label{eqn:GFMuNonGenericCritical}
	L_{\mu}(t)=1-t+\vert \Gamma(1-\alpha) \vert t^\alpha \ell\left(1/t\right)  + o(t^\alpha \ell\left(1/t\right)) \quad \text{as } t\rightarrow 0^+.
\end{equation} 

\subsection{Boltzmann distributions on maps with a boundary}\label{sec:BoltzmannDistributionBoundary}

We now deal with maps that have a boundary. The root face $f_*$ is then considered as external to the map, and receives no weight. This amounts to change the Boltzmann weights for
\begin{equation}\label{eqn:BoltzmannWeightBoundary}
	\wq(\m):=\prod_{f \in \Fm\backslash \{f_*\}}q_{\deg(f)/2}.
\end{equation} Let us introduce the partition functions for bipartite maps with a fixed perimeter
\begin{equation}\label{eqn:PartitionFunctionFk}
	F_k:=\sum_{\m\in\M_{k}}\wq(\m), \quad k\in \Z_+,
\end{equation} where we hide the dependence in $\q$ in the notation. These quantities are finite if $\q$ is admissible. The associated Boltzmann measure on maps with fixed perimeter is defined by
\begin{equation}\label{eqn:BoltzmannBoundary}
	\Pqk(\m):=\frac{\mathbf{1}_{\{\m \in \M_k \}}\wq(\m)}{F_k}, \quad \m\in\M, \ k\in \Z_+.
\end{equation} The goal of this section is to derive asymptotics of $F_k$. We also define the generating function \begin{equation}\label{eqn:GeneratingFunctionF}
	F(x):=\sum_{k=0}^\infty F_k x^k, \quad x\geq 0,
\end{equation} whose radius of convergence is denoted by $\rq$. We borrow ideas of \cite[Section 3.1]{borot_recursive_2012} and \cite[Section 5.1]{curien_peeling_2016}, but we need to extend these results due to our more general definition of non-genericity. We let the (admissible) weight sequence $\q$ vary by defining $\q(u):=(u^{k-1}\q_k : k\in \N)$. Using the generating function for pointed maps \cite[Proposition 2, Section A.1]{budd_peeling_2016} and Euler's formula, we obtain (see \cite[Equation (5.2)]{curien_peeling_2016})
\begin{equation}\label{eqn:IntegralFk}
	F_k=\binom{2k}{k} \int_0^1 (u\Zqu)^k \mathrm{d}u, \quad k\in \Z_+.
\end{equation} In the setting of \cite{le_gall_scaling_2011}, the asymptotics of $F_k$ would follow from Laplace's method, see \cite{borot_recursive_2012,curien_peeling_2016}. Here, we use a technique based on Karamata's Abelian theorem. Let $Y_\q$ be the inverse function of $u\mapsto u\Zqu$ on $[0,1]$. Since $\Zqu$ is the smallest solution of (\ref{eqn:AdmissibilityQ}) with $\q=\q(u)$ and $f_{\q(u)}(x)=\fq(ux)$, we have by (\ref{eqn:GFmuAsfq})
\[Y_{\q}(x)=x-x\fq(x)=1+x-\Zq G_{\mu}(x/\Zq), \quad x\in[0,\Zq].\] This proves that $Y_{\q}$ is of class $C^\infty$ on $(0,\Zq)$. Coming back to the integral in (\ref{eqn:IntegralFk}), 
\[\int_0^1 (u\Zqu)^k \mathrm{d}u=\int_0^{\Zq} x^kY_{\q}'(x) \mathrm{d}x=\Zq^{k+1}\int_0^\infty e^{-t(k+1)}Y_{\q}'(\Zq e^{-t}) \mathrm{d}t.\] We now introduce the increasing function 
\[U(t):=\int_0^t \Zq e^{-u}Y_{\q}'(\Zq e^{-u}) \mathrm{d}u=1-Y_{\q}(\Zq e^{-t})=-\Zq e^{-t} + \Zq L_{\mu}(t) , \quad t\geq 0.\] On the one hand, the integral is expressed in terms of the Laplace transform of $U$: 
\[\int_0^1 (u\Zqu)^k \mathrm{d}u=\Zq^k \int_0^\infty e^{-kt}U(\mathrm{d}t),\] and on the other hand from \eqref{eqn:GFMuSubcritical}, \eqref{eqn:GFMuGenericCritical} and \eqref{eqn:GFMuNonGenericCritical}, as $t\rightarrow 0^+$,
\begin{equation*}
U(t)=\left\lbrace
\begin{array}{ccc}
\Zq(1-\mmu)t+o(t) & (\q \ \textup{subcritical})\\
\\
\Zq \smu t^2/2 +o(t^2) & (\q \ \textup{generic critical})\\
\\
\Zq \vert \Gamma(1-\alpha) \vert t^\alpha \ell(1/t) +o(t^\alpha \ell(1/t)) & (\q \ \textup{non-generic critical } \alpha)\\
\end{array}\right..
\end{equation*} We can thus apply Karamata's Abelian theorem \cite[Theorem 1.7.1']{bingham_regular_1989}, giving
\begin{equation}\label{eqn:AsymptoticsFk}
F_k \underset{k\rightarrow \infty}{\sim} \left\lbrace
\begin{array}{ccc}
\displaystyle\frac{\Zq(1-\mmu)(4\Zq)^k}{\sqrt{\pi}k^{3/2}}   & (\q \ \textup{subcritical})\\
\\
\displaystyle\frac{\Zq\smu(4\Zq)^k}{\sqrt{\pi}k^{5/2}}   & (\q \ \textup{generic critical})\\
\\
\displaystyle\frac{\Zq\alpha\sqrt{\pi}(4\Zq)^k \ell(k)}{\sin(\pi(\alpha-1))k^{\alpha+1/2}} & (\q \ \textup{non-generic critical }\alpha)\\
\end{array}\right.,
\end{equation} where we used Stirling's formula and the identity $\Gamma(1-\alpha)\Gamma(1+\alpha)=\alpha\pi/\sin(\pi\alpha)$ for $\alpha \in (1,2)$. The quantity $a:=\alpha+1/2$ is of particular importance, so we use the notation of~\cite{curien_peeling_2016}.

\begin{Not}
An admissible weight sequence $\q$ is said of type $a=3/2$ if it is 	subcritical, of type $a=5/2$ if it is generic critical and of type $a\in(3/2,5/2)$ if it is non-generic critical with parameter $\alpha=a-1/2$.
\end{Not}

This allows us to write (\ref{eqn:AsymptoticsFk}) in a unified way. Let 
\begin{equation}\label{eqn:Constants}
	c_{3/2}:=\frac{\Zq(1-\mmu)}{\sqrt{\pi}}, \quad c_{5/2}:=\frac{\Zq\smu}{\sqrt{\pi}} \quad \textup{and} \quad c_{a}:=\frac{\Zq(a-1/2)\sqrt{\pi}}{\sin(\pi(a-3/2))},\quad a\in(3/2,5/2).
\end{equation} We also set the convention that $\ell=1$ if $a\in\{3/2,5/2\}$. Then,
\begin{equation}\label{eqn:AsymptoticsFkSimplified}
	F_k \underset{k\rightarrow \infty}{\sim}\frac{c_a (4\Zq)^k\ell(k)}{k^{a}}, \quad a\in[3/2,5/2].
\end{equation}

From now on, the case $a=2$ is excluded and will be treated apart in Section \ref{sec:TheA2Case}. Let us derive from \eqref{eqn:AsymptoticsFkSimplified} a singular expansion for $F$, whose radius of convergence is $\rq=1/(4\Zq)$. For $k\geq 0$, let
\[\zeta(k):=\frac{F_k \rq^k}{F(\rq)}\underset{k\rightarrow \infty}{\sim}\frac{c_a\ell(k)}{k^a F(\rq)}.\] The function $k\mapsto k^{a} \zeta(k)$ is slowly varying, so by Karamata's theorem~\cite[Proposition 1.5.10]{bingham_regular_1989}
\[\sum_{j\geq k}{\zeta(j)}\underset{k\rightarrow \infty}{\sim}\frac{k \zeta(k)}{a-1} \underset{k\rightarrow \infty}{\sim} \frac{c_a\ell(k)}{(a-1)F(\rq)k^{a-1}}.\] We then apply Karamata's Abelian theorem~\cite[Theorem 8.1.6]{bingham_regular_1989} to get the asymptotics of the Laplace transform $L_{\zeta}$ of $\zeta$. For $a\in[3/2,2)$, we find
\[L_{\zeta}(t)=1- \frac{\Gamma(2-a)c_a}{(a-1)F(\rq)} t^{a-1} \ell\left(1/t\right)  + o(t^{a-1}\ell\left(1/t\right)) \quad  \text{as } t\rightarrow 0^+,\] while for $a\in(2,5/2]$,
\[L_{\zeta}(t)=1-m_{\zeta}t+\frac{\vert \Gamma(2-a) \vert c_a}{(a-1)F(\rq)} t^{a-1} \ell\left(1/t\right)  + o(t^{a-1}\ell\left(1/t\right)) \quad \text{as } t\rightarrow 0^+.\] The function $\ell_1(y):=\ell(-1/\log(1-1/y))$ is slowly varying at infinity by stability properties of slowly varying functions~\cite[Proposition 1.3.6]{bingham_regular_1989}. We obtain that for $a\in[3/2,2)$, 
\[G_{\zeta}(s)=1- \frac{\Gamma(2-a)c_a}{(a-1)F(\rq)} (1-s)^{a-1} \ell_1\left(\frac{1}{1-s}\right)(1+o(1)) \quad  \text{as } s\rightarrow 1^-,\] and for $a\in(2,5/2]$,
\[G_{\zeta}(s)=1-m_{\zeta}(1-s)+\frac{\vert \Gamma(2-a) \vert c_a}{(a-1)F(\rq)} (1-s)^{a-1} \ell_1\left(\frac{1}{1-s}\right)(1+o(1))\quad  \text{as } s\rightarrow 1^-.\] The singular expansion of $F$ follows from $F(x\rq)=F(\rq)G_{\zeta}(x)$. Note that  we have $m_{\zeta}=\rq F'(\rq)/F(\rq)$, and let $\kappa_a:=c_a\vert \Gamma(2-a) \vert/(a-1)$. Recall also that $ \ell_1=1$ for $a\in\{3/2,5/2\}$.

\begin{Prop}\label{prop:SingularExpansionF} Let $\q$ be a weight sequence of type $a$. For $a\in[3/2,2)$,
\[F(x)=F(\rq)- \kappa_a\left(1-\frac{x}{\rq}\right)^{a-1} \ell_1\left(\frac{1}{1-\frac{x}{\rq}}\right)(1+o(1))\quad  \text{as } x\rightarrow \rq^-,\] and for $a\in(2,5/2]$,
\[F(x)=F(\rq)-\rq F'(\rq)\left(1-\frac{x}{\rq}\right)+\kappa_a \left(1-\frac{x}{\rq}\right)^{a-1} \ell_1\left(\frac{1}{1-\frac{x}{\rq}}\right)(1+o(1))\quad  \text{as } x\rightarrow \rq^-.\]
\end{Prop}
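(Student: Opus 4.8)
The plan is essentially to unwind the chain of reductions already set up in the excerpt and verify that the Tauberian machinery applies as announced. The target is a singular expansion of $F(x)=\sum_k F_k x^k$ near its radius of convergence $\rq = 1/(4\Zq)$, and the excerpt has already established (in \eqref{eqn:AsymptoticsFkSimplified}) that $F_k \sim c_a (4\Zq)^k \ell(k)/k^a$ with $a\in[3/2,5/2]\setminus\{2\}$. First I would set $\zeta(k) := F_k \rq^k / F(\rq)$, noting $F(\rq)<\infty$ precisely because $a>1$ (so the series $\sum_k c_a \ell(k)/k^a$ converges), and observe that $\zeta$ is a probability mass function with $\zeta(k)\sim c_a \ell(k)/(k^a F(\rq))$. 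The generating function identity $F(x\rq) = F(\rq) G_\zeta(x)$ then reduces everything to understanding $G_\zeta$ near $1$.

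Next I would extract the behaviour of $G_\zeta$ near $s=1^-$ via a two-step Tauberian argument, exactly as sketched. Since $k\mapsto k^a\zeta(k)$ is slowly varying, Karamata's theorem (\cite[Proposition 1.5.10]{bingham_regular_1989}) gives $\sum_{j\geq k}\zeta(j)\sim c_a \ell(k)/((a-1)F(\rq)k^{a-1})$, i.e. the tail of $\zeta$ is regularly varying with index $-(a-1)\in(-3/2,-1/2)$, $a\neq 2$. Then Karamata's Abelian theorem (\cite[Theorem 8.1.6]{bingham_regular_1989}), applied to the Laplace transform $L_\zeta(t)=G_\zeta(e^{-t})$, yields the stated expansion of $L_\zeta(t)$ as $t\to 0^+$ — with leading correction term $\mp\Gamma(2-a)c_a t^{a-1}\ell(1/t)/((a-1)F(\rq))$, and for $a>2$ an additional regular linear term $-m_\zeta t$ coming from the finite mean $m_\zeta = \sum_k k\zeta(k) = \rq F'(\rq)/F(\rq)$ (finite exactly when $a>2$). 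I would then transfer from $L_\zeta(t)$ back to $G_\zeta(s)$ via the substitution $t = -\log s$, $s = 1-1/y$ with $y\to\infty$, using that $\ell_1(y):=\ell(-1/\log(1-1/y))$ is again slowly varying (\cite[Proposition 1.3.6]{bingham_regular_1989}) and that $1-s \sim t$ to leading order, so the slowly varying factor and the power $t^{a-1}$ both transport correctly up to the $(1+o(1))$ error. Finally $F(x) = F(\rq)G_\zeta(x/\rq)$ gives the two displayed formulas, with $\kappa_a = c_a |\Gamma(2-a)|/(a-1)$ and the sign bookkeeping: $\Gamma(2-a)<0$ for $a\in(1,2)$ and $>0$ for $a\in(2,3)$, which flips the sign of the correction term between the two regimes.

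The main subtlety — the step I would be most careful about — is the transfer from the asymptotics of $L_\zeta(t)$ at $t\to0^+$ to those of $G_\zeta(s)$ at $s\to1^-$, and in particular justifying that the error term stays $o((1-s)^{a-1}\ell_1(1/(1-s)))$ rather than merely $o(1)$. The point is that $-\log s = (1-s) + O((1-s)^2)$, so writing $t = -\log s$ one has $t^{a-1} = (1-s)^{a-1}(1+O(1-s))$ and $\ell(1/t) = \ell(1/(1-s))(1+o(1))$ by the uniform convergence theorem for slowly varying functions (\cite[Theorem 1.2.1]{bingham_regular_1989}); since $a-1 < 3/2$ the correction $O(1-s)$ is itself $o((1-s)^{a-1})$ only when $a>2$, but in the $a<2$ regime there is no competing lower-order term, so an overall $(1+o(1))$ multiplicative error suffices, which is exactly how the statement is phrased. (In the $a>2$ case the linear term $m_\zeta(1-s)$ genuinely dominates the $(1-s)^{a-1}$ correction, and one checks the expansion is consistent to the order claimed.) Everything else — convergence of $F(\rq)$, finiteness of $F'(\rq)$ for $a>2$, the identity $m_\zeta = \rq F'(\rq)/F(\rq)$, and the trigonometric rewriting via $\Gamma(1-\alpha)\Gamma(1+\alpha) = \alpha\pi/\sin(\pi\alpha)$ — is routine bookkeeping already carried out in the excerpt.
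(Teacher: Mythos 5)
Your proposal retraces the paper's own derivation step for step (the proposition is stated in the paper as a summary of the computation carried out in the text just above it): define $\zeta(k)=F_k\rq^k/F(\rq)$, apply Karamata's Proposition 1.5.10 to the tail, apply Karamata's Abelian theorem to $L_\zeta$, transfer to $G_\zeta$ via $t=-\log s$ with $\ell_1(y):=\ell(-1/\log(1-1/y))$, and conclude with $F(x\rq)=F(\rq)G_\zeta(x)$; your extra care about how the $(1+o(1))$ error survives the substitution is correct and worth having. One small slip in the bookkeeping paragraph: you assert $\Gamma(2-a)<0$ for $a\in(1,2)$ and $\Gamma(2-a)>0$ for $a\in(2,3)$, but it is the other way around ($\Gamma$ is positive on $(0,1)$ and negative on $(-1,0)$); since you only use this to record that the sign of the $(1-x/\rq)^{a-1}$ term flips between the two regimes — which is still true — the final displayed formulas are unaffected.
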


\subsection{Boltzmann distributions on maps with a simple boundary}\label{sec:BoltzmannDistributionSimpleBoundary}

The aim of this section is to obtain enumerative results for maps with a simple boundary. A (bipartite) map with a \textit{simple boundary} is a map whose boundary is a cycle with no self-intersection. Their set is denoted by $\Ms$. Consistently, for $k \geq 0$, $\Ms_{k}$ is the set of maps with a simple boundary of perimeter $2k$. A generic element of $\Ms$ is denoted by $\ms$, and $\dagger\in\Ms_0$ by convention. The associated partition function is
\begin{equation}\label{eqn:PartitionFunctionFks}
	\Fs_k:=\sum_{\ms\in\Ms_{k}}\wq(\ms), \quad k\in \Z_+.
\end{equation} For admissible $\q$, the Boltzmann measure for maps with a simple boundary is defined by
\begin{equation}\label{eqn:BoltzmannSimpleBoundary}
	\Pqks(\m):=\frac{\mathbf{1}_{\left\lbrace \m \in \Ms_k \right\rbrace }\wq(\m)}{\Fs_k}, \quad \m\in\M, \ k\in \Z_+,
\end{equation} and the associated generating function by
\begin{equation}\label{eqn:GeneratingFunctionFs}
	\Fs(x):=\sum_{k=0}^\infty \Fs_k x^k \quad x\geq 0.
\end{equation} The radius of convergence of $\Fs$ is denoted by $\rsq$. We will prove the following analogue of Proposition \ref{prop:SingularExpansionF} for maps with a simple boundary, which is the technical core of this paper. The constants $(\cs_a : a\in\{3/2\}\cup (2,5/2])$ and the slowly varying functions $\ls_1$ (also depending on $a$) will be defined at the end of the section, see \eqref{eqn:ConstantsHat} and \eqref{eqn:FunctionsEllHat}.

\begin{Prop}\label{prop:SingularExpansionFs} Let $\q$ be a weight sequence of type $a$. For $a=3/2$, as $y\rightarrow \rq F^2(\rq)^-$,
\[\Fs(y)=F(\rq)\left( 1-\frac{1}{2}\left(1-\frac{y}{\rq F^2(\rq)}\right) +\cs_{3/2}\left(1-\frac{y}{\rq F^2(\rq)}\right)^2(1+o(1)) \right).\] If $a\in (3/2,5/2]\backslash\{2\}$, $\Fs$ has radius of convergence $\rsq=\rq F^2(\rq)$. Moreover, for $a\in(3/2,2)$,
\[\Fs(y)=F(\rq)\left( 1-\frac{1}{2}\left(1-\frac{y}{\rsq}\right) +\left(1-\frac{y}{\rsq}\right)^{\frac{1}{a-1}}\ls_1\left(\frac{1}{1-\frac{y}{\rsq}} \right)(1+o(1)) \right) \quad \text{as } y\rightarrow \rsq^-,\]and for $a\in (2,5/2]$,
\[\Fs(y)=F(\rq)\left( 1-\frac{\cs_a}{2}\left(1-\frac{y}{\rsq}\right) +\left(1-\frac{y}{\rsq}\right)^{a-1}\ls_1\left(\frac{1}{1-\frac{y}{\rsq}} \right)(1+o(1)) \right) \quad \text{as } y\rightarrow \rsq^-.\]
	
\end{Prop}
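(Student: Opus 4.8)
The plan is to relate the generating function $\Fs$ of maps with a simple boundary to the generating function $F$ of maps with a general boundary via a combinatorial decomposition, and then feed the singular expansion of Proposition~\ref{prop:SingularExpansionF} into the resulting functional equation, inverting it by a Lagrange-type or implicit-function analysis near the singularity. The starting point should be the classical observation (in the spirit of \cite{curien_percolation_2014}) that a bipartite map with a general boundary of perimeter $2k$ can be cut along the cut-vertices of its boundary into a collection of maps with a simple boundary, arranged along a tree-like structure: reading the boundary, each time one returns to an already-visited vertex one closes an irreducible component. Concretely, if one traverses $\Bm$ and records the ``excursions'' between successive visits, one obtains that a map with a general boundary is built by taking a map with a simple boundary of some perimeter $2j$ and substituting, into each of the $2j$ boundary edges (or rather at each boundary vertex), an independent map with a general boundary. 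This should yield a relation of the form $F(x) = \Fs\!\big(x\,\Psi(F(x))\big)$ or, more likely in the bipartite setting, something like $F(x)=\Fs(xF(x)^2)$ — the factor $F(x)^2$ accounting for the two ``dangling'' general-boundary maps grafted at the two endpoints of each simple-boundary edge — possibly up to a correction term coming from the perimeter-$0$ or perimeter-$1$ degenerate pieces. The exact bookkeeping of which vertices/edges receive which grafts, and the role of the duplicated edges in $\Scoop$, is what fixes the precise algebraic form; I would pin it down by a careful first-return decomposition of the boundary contour.

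Once the functional relation, say $F(x) = \Fs\big(x F(x)^2\big)$, is in hand, the radius of convergence of $\Fs$ is identified as follows. As $x \uparrow \rq$, $F(x)\uparrow F(\rq)$ (finite, by Proposition~\ref{prop:SingularExpansionF}, since $a>3/2$ gives $a-1>1/2$... more precisely $F(\rq)<\infty$ in all cases $a\in[3/2,5/2]$), and the argument $xF(x)^2 \uparrow \rq F(\rq)^2$. One must check that this value is exactly the singularity $\rsq$ of $\Fs$: the map $x\mapsto xF(x)^2$ is strictly increasing on $[0,\rq]$, so it is a bijection onto $[0,\rq F(\rq)^2]$, and $\Fs$ is finite and increasing on that image because $F=\Fs\circ(xF^2)$ is; conversely $\Fs$ cannot be analytic past $\rq F(\rq)^2$ because that would force $F$ to be analytic past $\rq$ unless the derivative $\tfrac{d}{dx}(xF(x)^2)$ vanishes at $\rq$ — and from Proposition~\ref{prop:SingularExpansionF} one reads off whether $F'(\rq)$ is finite (it is, for $a>2$) or infinite (for $a<2$, where $F'(x)\sim \kappa_a(a-1)\rq^{-1}(1-x/\rq)^{a-2}\ell_1(\cdots)\to\infty$). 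In the subcritical case $a=3/2$ the relation is analytic-to-analytic near the singularity and one just Taylor-expands to second order, which should produce the displayed quadratic expansion with $\cs_{3/2}$ an explicit combination of $F(\rq),\rq F'(\rq),\rq F''(\rq)$.

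For $a\in(3/2,5/2]\setminus\{2\}$ the heart of the matter is inverting the singular expansion. Write $\varepsilon := 1 - x/\rq$ and $\delta := 1 - y/\rsq$ with $y = xF(x)^2$; a short computation using $F(\rq)-F(x)\sim \kappa_a\varepsilon^{a-1}\ell_1(1/\varepsilon)$ gives $\delta = c\,\varepsilon(1+o(1))$ in the dilute range $a>2$ (because then $xF^2$ is differentiable with nonzero derivative at $\rq$, so $\delta$ is comparable to $\varepsilon$), whereas in the dense range $a<2$ one gets $\delta \sim c\,\varepsilon^{a-1}\ell_1(1/\varepsilon)$ since the $F(x)^2$ factor contributes the dominant singular term. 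Substituting back $\Fs(y) = F(x) = F(\rq) - \kappa_a\varepsilon^{a-1}\ell_1(1/\varepsilon)(1+o(1))$ and re-expressing $\varepsilon$ in terms of $\delta$: in the dilute case $\varepsilon\sim c'\delta$ gives $\Fs(y)=F(\rq)-\kappa_a c'^{a-1}\delta^{a-1}\ls_1(1/\delta)(1+o(1))$ plus a linear term $-\tfrac{\cs_a}{2}\delta$ coming from the analytic part (the regular linear term of the composition), matching the claimed form with exponent $a-1$; in the dense case $\varepsilon^{a-1}\ell_1(1/\varepsilon)\sim c''\delta$, hence $\varepsilon\sim (c''^{-1}\delta)^{1/(a-1)}\ls(\cdots)$ by inversion of regularly varying functions \cite[\S1.5.7]{bingham_regular_1989}, and then $\Fs(y)=F(\rq)-\tfrac12\delta + \delta^{1/(a-1)}\ls_1(1/\delta)(1+o(1))$, exactly the stated exponent $1/(a-1)=\beta$. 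The constants $\cs_a$ and the slowly varying functions $\ls_1$ are then read off and recorded as \eqref{eqn:ConstantsHat}, \eqref{eqn:FunctionsEllHat}; the leading $-\tfrac12$ (resp.\ $-\tfrac{\cs_a}{2}$) coefficient should come out of expanding $xF(x)^2$ and the geometric factor built into the boundary decomposition.

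The main obstacle I anticipate is establishing the precise functional equation relating $F$ and $\Fs$ — getting the combinatorial decomposition of a general boundary into simple-boundary irreducible components exactly right, including the correct power of $F$ in the substitution argument and the treatment of degenerate (perimeter-zero, or single-edge-duplicated) pieces, since the whole singular analysis is extremely sensitive to this algebraic form. Everything after that is Tauberian/regular-variation bookkeeping, routine but requiring care to distinguish the $a<2$ and $a>2$ regimes (and to justify that the error terms $o(\cdot)$ in Proposition~\ref{prop:SingularExpansionF} survive the nonlinear inversion, which uses uniform convergence of slowly varying functions on compacts, \cite[Theorem 1.5.2]{bingham_regular_1989}). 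The excluded case $a=2$ is singled out precisely because there $\varepsilon^{a-1}=\varepsilon$ competes with the linear analytic term and one needs the refined logarithmic expansion deferred to Section~\ref{sec:TheA2Case}.
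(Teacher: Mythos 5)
Your overall roadmap is right: you correctly identify the Brezin--Itzykson--Parisi--Zuber / Bouttier--Guitter relation $F(x)=\Fs(xF^2(x))$ (this is Lemma~\ref{lem:RelationFFs} in the paper), the identification of $\rsq$ with $\rq F^2(\rq)$, and the use of regular-variation inversion to pass from the singular expansion of $P(x)=xF^2(x)$ to that of $P^{-1}$. The dilute and $a=3/2$ discussions are essentially sound (and close to the paper's: see \eqref{eqn:AsymptoticInversePDilute} and \eqref{eqn:AsymptoticInversePSubcritical}).

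There is, however, a genuine gap in your treatment of the dense case $a\in(3/2,2)$. You write $\Fs(y)=F(x)=F(\rq)-\kappa_a\varepsilon^{a-1}\ell_1(1/\varepsilon)(1+o(1))$, then re-express $\varepsilon^{a-1}\ell_1(1/\varepsilon)\sim c''\delta$ to obtain the leading $-\tfrac12\delta$ term, and then simply \emph{assert} the subleading $\delta^{1/(a-1)}\ls_1(1/\delta)$ correction. But that correction is of smaller order than $\delta$ (since $1/(a-1)>1$), so it is invisible inside the $(1+o(1))$ multiplying $\kappa_a\varepsilon^{a-1}\ell_1$: the only information Proposition~\ref{prop:SingularExpansionF} gives you about $F$ near $\rq$ is the \emph{leading} singular term, with an unquantified $o(\cdot)$ error. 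Plugging $x=P^{-1}(y)$ into that one-term expansion yields $\Fs(y)=F(\rq)\bigl(1-\tfrac12\delta(1+o(1))\bigr)$ and nothing finer. To extract the $\delta^{1/(a-1)}$ correction you would need a \emph{second}-order singular expansion of $F$ near $\rq$, which Proposition~\ref{prop:SingularExpansionF} does not provide and which cannot be derived directly from the Tauberian estimate on $F_k$.

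The paper's resolution is a small but crucial algebraic observation that you miss: since $y=xF(x)^2$, the relation $\Fs(y)=F(x)$ can be rewritten as $\Fs(y)=\sqrt{y/x}=\sqrt{y/P^{-1}(y)}$. This identity expresses $\Fs$ entirely through $P^{-1}$, bypassing any further knowledge of $F$. Expanding $\sqrt{(1-\delta)/(1-\varepsilon(y))}$ with $\varepsilon(y)=1-P^{-1}(y)/\rq \asymp \delta^{1/(a-1)}$ from \eqref{eqn:AsymptoticInversePDense} then cleanly produces both the leading $-\tfrac12\delta$ and the subleading $+\tfrac12\varepsilon(y)$ terms, the latter being exactly the $\delta^{1/(a-1)}\ls_1(1/\delta)$ correction. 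The same formula also handles the $a=3/2$ and dilute cases uniformly and gives the radius of convergence immediately: for $a\neq 3/2$ the resulting expansion is non-analytic at $P(\rq)$, so $\Fs$ cannot extend past it, and Lemma~\ref{lem:RelationFFs} gives the reverse inequality (your implicit-function-theorem argument for the radius also needs care in the dense range where $P'(\rq)=\infty$; the non-analyticity of the expansion is more robust).
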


Our approach relies on a simple relation between the generating functions $F$ and $\Fs$, which was first observed in~\cite{brezin_planar_1978} (see also \cite{bouttier_distance_2009} for quadrangulations). This relation is based on the decomposition of a map with a boundary $\m$ into a map with a simple boundary $\ms$ containing the root edge, and a collection of maps with a general boundary attached to vertices of $\partial\ms$ (see \cite[Figure 11 and Equation (5.1)]{bouttier_distance_2009} for details). We then obtain the following identity.

\begin{Lem}[\cite{brezin_planar_1978,bouttier_distance_2009}]\label{lem:RelationFFs}
For every weight sequence $\q$ and every $x\geq 0$, we have
\[F(x)=\Fs\left(xF^2(x)\right).\] In particular, the radius of convergence of $\Fs$ satisfies $\rsq\geq\rq F^2(\rq)$.
\end{Lem}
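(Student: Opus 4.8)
The plan is to establish the identity $F(x)=\Fs(xF^2(x))$ combinatorially, by exhibiting a weight-preserving bijection between rooted bipartite maps with a general boundary and pairs consisting of a map with a simple boundary together with a family of maps with a general boundary grafted along it. First I would set up the decomposition at the level of a single map $\m\in\M_k$. Given $\m$ with its root edge $e_*$ lying on the boundary, I would look at the connected component of the boundary face that remains after removing the ``dangling'' parts: concretely, walk along $\Bm$ starting from $e_*$ and record the sequence of vertices visited; the boundary may revisit vertices, and between two successive visits of the same vertex a submap (with its own boundary) is pinched off. Peeling off all these pinched submaps leaves a map $\ms$ whose boundary is a simple cycle, still carrying the root edge. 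Conversely, to each of the $2j$ boundary half-edges of an element $\ms\in\Ms_j$ one may attach an arbitrary element of $\M$ (a map with a general boundary), the attachment being done by identifying the root edge of the grafted map with the chosen boundary edge side; the trivial map $\dagger$ corresponds to grafting nothing. The careful point is to check this is genuinely a bijection — that the decomposition and the reconstruction are mutually inverse — which is exactly the content of \cite[Figure 11 and Equation (5.1)]{bouttier_distance_2009} and \cite{brezin_planar_1978}; I would cite it rather than redo it in full, but I would at least indicate why no map is counted twice (the simple core is uniquely determined as the ``outermost'' simple boundary containing $e_*$).

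Next I would translate this bijection into generating functions. Since Boltzmann weights are multiplicative over faces and the decomposition partitions the internal faces of $\m$ among $\ms$ and the grafted submaps, one has $\wq(\m)=\wq(\ms)\prod_i \wq(\m_i)$ where $\m_1,\dots$ are the grafted pieces. For a map with a simple boundary of perimeter $2j$ there are $2j$ boundary edge-sides available for grafting, and each receives an independent element of $\M$; summing over all choices gives a factor $F(x)^{2j}$, where the variable $x$ is the one marking half the boundary perimeter. Hence, tracking the perimeter of the resulting general-boundary map, summation over $j\geq 0$ yields
\[
F(x)=\sum_{j\geq 0}\Fs_j\, x^{j}\, F(x)^{2j}=\sum_{j\geq 0}\Fs_j\big(xF^2(x)\big)^{j}=\Fs\big(xF^2(x)\big),
\]
valid as an identity of formal power series and hence, by nonnegativity of all coefficients, as an identity of $[0,\infty]$-valued functions for every $x\geq 0$.

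Finally, for the statement about radii of convergence: evaluating at $x=\rq$ gives $F(\rq)=\Fs(\rq F^2(\rq))$, and by Proposition \ref{prop:SingularExpansionF} the left-hand side is finite (for $a\in(3/2,5/2]$; the case $a=3/2$ requires only admissibility). Since $\Fs$ has nonnegative coefficients, finiteness of $\Fs$ at the point $\rq F^2(\rq)$ forces its radius of convergence to satisfy $\rsq\geq\rq F^2(\rq)$, which is the asserted inequality. The main obstacle, and the only genuinely delicate part, is the combinatorial bijection itself — specifically making precise the notion of the canonical simple-boundary core of a map with a pinched boundary and verifying that the grafting operation recovers every map exactly once; the generating-function manipulation is then routine. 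I would resolve this by appealing to the established decomposition in \cite{bouttier_distance_2009}, adapting only the bookkeeping to the bipartite Boltzmann-weighted setting.
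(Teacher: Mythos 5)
Your proposal takes essentially the same route as the paper: the paper also does not reprove the decomposition but sketches it in the preceding paragraph and cites \cite{brezin_planar_1978,bouttier_distance_2009}, exactly as you do, and your generating-function computation and the radius-of-convergence argument are both correct. One imprecision in your sketch of the inverse map is worth flagging: the grafting is at the $2j$ \emph{vertices} of $\partial\ms$ (equivalently, at the $2j$ corners of the root face of $\ms$, one per boundary vertex), with the root \emph{vertex} of each grafted map identified to a boundary vertex of $\ms$ and the grafted map embedded in that corner. ``Identifying the root edge of the grafted map with the chosen boundary edge side'' is not a coherent gluing and, taken literally, would not give a perimeter additive in $j$ and the grafted perimeters. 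Since a simple boundary of perimeter $2j$ has equally $2j$ vertices, $2j$ corners and $2j$ edge-sides, the factor $F(x)^{2j}$ in your identity is unaffected; the slip only concerns the combinatorial description of the reconstruction, which you (like the paper) ultimately delegate to \cite{bouttier_distance_2009}.
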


We now use this relation to prove Proposition \ref{prop:SingularExpansionFs}. For $x\geq 0$, let $P(x):=xF^2(x)$, so that $P$ is continuous increasing on $[0,\rq]$ with inverse $P^{-1}$. By Proposition \ref{prop:SingularExpansionF}, for $a\in[3/2,2)$, \begin{equation}\label{eqn:AsymptoticPa1}
	P(x)=P(\rq)- \kappa'_a\left(1-\frac{x}{\rq}\right)^{a-1} \ell_1\left(\frac{1}{1-\frac{x}{\rq}}\right)(1+o(1))\quad  \text{as } x\rightarrow \rq^-,
\end{equation} and for $a\in(2,5/2]$,
\begin{equation}\label{eqn:AsymptoticPa2}
	P(x)=P(\rq)-C_{\q}\left(1-\frac{x}{\rq}\right)+\kappa'_a \left(1-\frac{x}{\rq}\right)^{a-1} \ell_1\left(\frac{1}{1-\frac{x}{\rq}}\right)(1+o(1))\quad  \text{as } x\rightarrow \rq^-,
\end{equation}where $C_{\q}:=\rq F(\rq)(F(\rq)+2\rq F'(\rq))$ and $\kappa'_a:=2\rq F(\rq)\kappa_a$. We now invert this expansion to get that of $P^{-1}$, and treat $a\in[3/2,2)$ and $a\in(2,5/2]$ separately. Recall that a positive function $f$ is regularly varying (at infinity) with index $\gamma\in\R$ if it satisfies $f(\lambda x)/f(x)\rightarrow \lambda^\gamma$ as $x\rightarrow\infty$, for every $\lambda>0$. The next lemma is a variant of~\cite[Theorem 1.5.12]{bingham_regular_1989}.

\begin{Lem}\label{lem:RegularlyVaryingInversion}
	Let $f$ be a continuous decreasing regularly varying function with index $-\gamma<0$. Then, $f$ is invertible and the function $y\mapsto f^{-1}(1/y)$ is regularly varying with index $1/\gamma$.\end{Lem}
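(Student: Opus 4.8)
The plan is to deduce this from the classical inversion theorem for regularly varying functions, \cite[Theorem 1.5.12]{bingham_regular_1989}, after a change of variable that turns the ``decreasing, index $-\gamma$, variable tending to a finite endpoint'' situation into the ``increasing, index $1/\gamma$, variable tending to infinity'' situation to which the cited result applies directly. First I would record the setup: $f$ is continuous and strictly decreasing on its domain (an interval with left endpoint, say, $x_0$), regularly varying at infinity with index $-\gamma<0$, hence $f(x)\to 0$ as $x\to\infty$ and $f$ is a bijection from its domain onto an interval of the form $(0,b)$ for some $b\in(0,\infty]$; in particular $f^{-1}$ is well defined and continuous, and $f^{-1}(t)\to\infty$ as $t\to 0^+$. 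Consequently $y\mapsto f^{-1}(1/y)$ is defined for all $y$ large enough and tends to $\infty$ as $y\to\infty$, so asking whether it is regularly varying at infinity makes sense.

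Next I would introduce $g(y):=1/f(y)$. Since $f$ is regularly varying with index $-\gamma$, the function $g$ is regularly varying with index $\gamma>0$, is continuous and strictly increasing, and $g(y)\to\infty$ as $y\to\infty$. Now \cite[Theorem 1.5.12]{bingham_regular_1989} applies to $g$: its (generalized) inverse $g^{\leftarrow}$ is regularly varying at infinity with index $1/\gamma$. It remains only to identify $g^{\leftarrow}$ with $y\mapsto f^{-1}(1/y)$. Indeed, $g(y)=z$ is equivalent to $f(y)=1/z$, i.e. $y=f^{-1}(1/z)$; since $f$ is a genuine continuous strictly monotone bijection, $g$ is too, so the generalized inverse coincides with the true inverse and $g^{\leftarrow}(z)=f^{-1}(1/z)$ for all $z$ in the relevant range. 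Therefore $z\mapsto f^{-1}(1/z)$ is regularly varying with index $1/\gamma$, which is the claim.

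There is essentially no hard step here: the only points requiring a word of care are (i) checking that the monotonicity and continuity of $f$ indeed force $f$ to be a bijection onto a neighbourhood of $0$, so that $f^{-1}$ exists as a true inverse and the ``generalized inverse'' in \cite{bingham_regular_1989} is the ordinary one, and (ii) the elementary fact that reciprocating a regularly varying function of index $\gamma$ yields one of index $-\gamma$ (immediate from the definition $f(\lambda x)/f(x)\to\lambda^{-\gamma}$). The substantive content is entirely carried by \cite[Theorem 1.5.12]{bingham_regular_1989}; our lemma is just its reformulation adapted to the decreasing, $x\to\rq^-$ shape in which $P$ and later $\Fs$ naturally arise, via the substitution $x\mapsto 1/(1-x/\rq)$ that sends $\rq^-$ to $+\infty$.
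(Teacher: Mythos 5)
Your argument is correct and follows exactly the route the paper has in mind: the paper states the lemma with no written proof beyond the remark that it is ``a variant of \cite[Theorem 1.5.12]{bingham_regular_1989}'', and your reduction via $g:=1/f$ (turning index $-\gamma$ into $\gamma>0$, applying the cited inversion theorem, then identifying $g^{\leftarrow}(z)=f^{-1}(1/z)$) is precisely the intended change of variables, made explicit.
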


Let $a\in[3/2,2)$. From (\ref{eqn:AsymptoticPa1}), we know that $R(x):=P(\rq)-P(\rq(1-1/x)) \sim \kappa'_a x^{1-a} \ell_1(x)$ as $x\rightarrow \infty$, thus $R$ is regularly varying with index $1-a<0$. Moreover, $R$ is continuous decreasing on $[1,\infty)$ with inverse $R^{-1}$ defined by
\[R^{-1}(y)=\left. 1 \middle/ \left(1-\frac{1}{\rq}P^{-1}(P(\rq)-y)\right)\right., \quad y\in(0,P(\rq)].\] By Lemma \ref{lem:RegularlyVaryingInversion}, $y\mapsto R^{-1}(1/y)$ is regularly varying with index $1/(a-1)$, so that~\cite[Theorem 1.4.1]{bingham_regular_1989} ensures the existence of a positive slowly varying function $\lb_1$ such that $R^{-1}(1/y)=y^{1/(a-1)}\lb_1(y)$, for $y\in[1/P(\rq),\infty)$. As a consequence, 
\begin{equation}\label{eqn:AsymptoticInversePDense}
	P^{-1}(y)=\rq- \rq \left(P(\rq)-y \right)^{\frac{1}{a-1}}\left.\middle/ \left(\lb_1\left(\frac{1}{P(\rq)-y} \right)\right)\right., \quad y\in [0,P(\rq)).
\end{equation} When $a=3/2$, $\ell_1=1$ so that computation can be made more explicit. Indeed, we find $R(x)\sim \kappa'_{3/2}/\sqrt{x}$ as $x\rightarrow \infty$. Then, the function $Q(x):=R((\kappa'_{3/2}/x)^2)$ satisfies $Q^{-1}(y)\sim y$ as $y\rightarrow 0^+$ and \[R^{-1}(y)=\left(\frac{\kappa'_{3/2}}{Q^{-1}(y)}\right)^2\sim \left(\frac{\kappa'_{3/2}}{y}\right)^2 \quad  \text{as } y\rightarrow 0^+.\] As a conclusion,
\begin{equation}\label{eqn:AsymptoticInversePSubcritical}
	P^{-1}(y)=\rq-\frac{\rq}{(\kappa'_{3/2})^2}\left(P(\rq)-y\right)^2(1+o(1)) \quad \text{as } y\rightarrow P(\rq)^-.
\end{equation} 

We are now interested in the case where $a\in(2,5/2]$. From (\ref{eqn:AsymptoticPa2}), we have
\[R(x):=\left.\left[P(\rq)-P(\rq(1-x)) \right]\middle/ C_{\q}\right. =x-\frac{\kappa'_a}{C_{\q}} x^{a-1} \ell_1\left(1/x\right)(1+o(1))\quad  \text{as } x\rightarrow 0^+.\] The function $R$ is continuous increasing on $[0,1]$, with inverse $R^{-1}$ defined by
\[R^{-1}(y)=1-\frac{1}{\rq}P^{-1}(P(\rq)-C_{\q}y), \quad y\in[0,P(\rq)/C_{\q}].\] It also satisfies $R^{-1}(y)\sim y$ as $y\rightarrow 0^+$. In particular, $y\mapsto R^{-1}(1/y)$ is regularly varying with index $-1$ and by \cite[Proposition 1.5.7]{bingham_regular_1989}, $\lb_1(y):=\ell_1(1/R^{-1}(1/y))$ is slowly varying. We get 
\[R^{-1}(y)-y\sim \frac{\kappa'_a}{C_{\q}} \left(R^{-1}(y)\right)^{a-1} \ell_1\left(1/R^{-1}(y)\right)\sim \frac{\kappa'_a}{C_{\q}} y^{a-1} \lb_1\left(1/y\right)\quad  \text{as } y\rightarrow 0^+,\] and as a conclusion 
\begin{equation}\label{eqn:AsymptoticInversePDilute}
	P^{-1}(y)=\rq-\frac{\rq }{C_{\q}}\left(P(\rq)-y\right)-\frac{\kappa'_a}{C_{\q}^a}\left(P(\rq)-y\right)^{a-1}\lb_1\left(\frac{C_{\q}}{P(\rq)-y}  \right)(1+o(1)) \ \ \text{as } y\rightarrow P(\rq)^-.
\end{equation} We can now introduce the constants involved in the statement of Proposition \ref{prop:SingularExpansionFs},
\begin{equation}\label{eqn:ConstantsHat}
	\cs_{3/2}:=\frac{P(\rq)^2}{2(\kappa'_{3/2})^2}-\frac{1}{8} \quad \text{and} \quad \cs_a=1-\frac{P(\rq)}{C_{\q}}\in(0,1) \quad \text{for } a\in(2,5/2].
\end{equation} and the functions $\ls_1$ (that are slowly varying by~\cite[Proposition 1.3.6]{bingham_regular_1989}) defined by
\begin{equation}\label{eqn:FunctionsEllHat}
 \ls_1(y):=\frac{P(\rq)^{\frac{1}{a-1}}}{2\lb_1\left( \frac{y}{P(\rq)} \right)},\quad a\in(3/2,2) \quad \text{and} \quad \ls_1(y):=\frac{\kappa'_a P(\rq)^{a-1}}{2C_{\q}^a}\lb_1\left( \frac{C_{\q}y}{P(\rq)} \right), \quad a\in(2,5/2].
\end{equation} 

\begin{proof}[Proof of Proposition \ref{prop:SingularExpansionFs}.] By Lemma \ref{lem:RelationFFs}, we have that $\Fs(\rq F^2(\rq))=F(\rq)$, as well as \[\Fs(y)=\sqrt{\frac{y}{P^{-1}(y)}}, \quad 0<y\leq P(\rq).\] We obtain asymptotic expansions for $\Fs$ around $P(\rq)$ using \eqref{eqn:AsymptoticInversePDense}, \eqref{eqn:AsymptoticInversePSubcritical}, and \eqref{eqn:AsymptoticInversePDilute}.
	These expansions are singular for $a\neq 3/2$, and thus $\Fs$ is not of class $C^{\infty}$ at $P(\rq)$. Together with Lemma \ref{lem:RelationFFs}, this proves that the radius of convergence of $\Fs$ is $\rsq=P(\rq)$ in these cases.\end{proof}

\begin{Rk}\label{rk:QuadrangularCase} From there, one expects the theory of singularity analysis \cite[Chapter 6]{flajolet_analytic_2009} to give an equivalent of the partition function $\Fs_k$. However, it is not clear that the so-called delta-analyticity assumption is satisfied by $\Fs$. We will use instead Karamata's Tauberian theorem, which provides a weaker result (see Proposition \ref{prop:PropertyNu}). 

Note also that in the subcritical case, we do not know if $\rsq=\rq F^2(\rq)$ in general because the expansion of $\Fs$ is not singular. In the special case of quadrangulations, computations can be carried out explicitly using~\cite{bouttier_distance_2009}. We find that $\rsq>\rq F^2(\rq)$ if $\q$ is  subcritical. Moreover,
\[ \Fs_k\underset{k \rightarrow \infty}{\sim} \frac{2\sqrt{3}\rsq^{-k}}{27\sqrt{\pi}k^{5/2}} \quad (\q \text{ critical}) \quad \text{and} \quad  \Fs_k\underset{k \rightarrow \infty}{\sim} \frac{c_\q\rsq^{-k}}{k^{3/2}} \quad (\q \text{ subcritical}).\]
\end{Rk}

\section{Structure of the boundary of Boltzmann maps}\label{sec:StructureBoundary}

\subsection{Random trees and the Janson-Stef\'ansson bijection}\label{sec:TreesAndJS}

\medskip

\noindent\textbf{Trees.} A (finite) plane tree $\tr$ \cite{le_gall_random_2005,neveu_arbres_1986} is a finite subset of the sequences of positive integers
\[\mathcal{U}:=\bigcup_{n\in \Z_+}{\mathbb{N}^n}\] satisfying the following properties. First, $\emptyset \in \tr$ and is called the \textit{root vertex}. Then, for every $u=(u_1,\ldots,u_k) \in \tr$, $\uh:=(u_1,\ldots,u_{k-1}) \in \tr$ (and is called the \textit{parent} of $u$ in $\tr$). Finally, for every $u=(u_1,\ldots,u_k) \in \tr$, there exists $k_u=k_u(\tr)\in\Z_+$ (the number of children of $u$ in $\tr$) such that $uj:=(u_1,\ldots,u_{k},j)\in \tr$ iff $1\leq j \leq k_u$.  The height $\vert u \vert$ of a vertex $u=(u_1,\ldots,u_k)\in\tr$ is $\vert u \vert=k$, and we denote by $[\emptyset,u]$ (resp.\ $[\emptyset,u)$) the ancestral line of $u$ in $\tr$, $u$ included (resp.\ excluded). The vertices at even height are called white, and those at odd height are called black. We let $\tr_{\circ}$ and $\tr_{\bullet}$ be the corresponding subsets of vertices of $\tr$. The total number of vertices of a tree $\tr$ is denoted by $\vert\tr\vert$. The set of finite (plane) trees is denoted by $\T_f$, and $T$ stands for the identity mapping on $\T_f$.

Given a probability measure $\rho$ on $\Z_+$ with mean $\mrho\leq 1$, the law $\GWr$ of a \textit{Galton-Watson tree} with offspring distribution $\rho$ is characterized by 
\begin{equation}\label{eqn:GW}
	\GWr(\tr)=\prod_{u\in \tr}\rho(k_u), \quad \forall \ \tr\in\T_f.
\end{equation} A pair $(\rhow,\rhob)$ of probability measures on $\Z_+$ is called critical (resp.\ subcritical) if $\mrhow\mrhob=1$ (resp.\ $\mrhow\mrhob<1$). Then, the law $\GWrr$ of an \textit{(alternated) two-type Galton-Watson trees} with offspring distribution $(\rhow,\rhob)$ is characterized by
\begin{equation}\label{eqn:GWTwoType}
	\GWrr(\tr)=\prod_{u\in \tw}\rhow(k_u)\prod_{u\in \tb}\rhob(k_u), \quad \forall \ \tr\in\T_f.
\end{equation}

\medskip

\noindent\textbf{The Janson-Stef\'ansson bijection.} We now describe the Janson-Stef\'ansson bijection $\JS$ introduced in~\cite[Section 3]{janson_scaling_2015}. First, $\JS(\{\emptyset\})=\{\emptyset\}$. For $\tr\neq \{\emptyset\}$, $\JS(\tr)$ has the same vertices as $\tr$ but different edges defined as follows. For every $u\in\tw$, set the convention that $u0=\uh$ (if $u\neq\emptyset$) and $u(k_u+1)=u$. Then, for every $j\in\{0,1,\ldots,k_u\}$, add the edge $(uj,u(j+1))$ to $\JS(\tr)$. The root vertex of $\JS(\tr)$ is $1$ and its first children is chosen according to the lexicographical order of $\tr$. For further notice, we give a brief description of the inverse application $\JS^{-1}$. For $\tr\neq \{\emptyset\}$, $\JS^{-1}(\tr)$ has the same vertices as $\tr$, and edges defined as follows. For every leaf $u\in\tr$, let $(u_1,u_2,\ldots)$ be the sequence of vertices after $u$ in the contour order of $\tr$, and $\ell(u)$ the largest index such that $u_1,\ldots,u_{\ell(u)}$ all are ancestors of $u$ in $\tr$. Then, add an edge between $u$ and $u_k$ in $\JS^{-1}(\tr)$ for every $k\in\{1,\ldots,\ell(u)\}$. The last leaf $u'$ of $\tr$ in contour order is the root vertex, and $u'_{\ell(u)}$ its first child. 

The application $\JS$ is a bijection from $\T_f$ onto itself, such that every $u\in\tw$ is mapped to a leaf of $\JS(\tr)$, and every $u\in \tb$ with $k$ children is mapped to a vertex of $\JS(\tr)$ with $k+1$ children. See Figure \ref{fig:JS} for an illustration. This bijection simplifies the study of two-type Galton-Watson trees because of the following result of~\cite{janson_scaling_2015} (see also~\cite[Proposition 3.6]{curien_percolation_2014}).

\begin{Prop}{\textup{\cite[Appendix A]{janson_scaling_2015}}}\label{prop:LawJSTransform} Let $\rho_{\circ}$ and $\rho_{\bullet}$ be probability measures on $\Z_+$ such that $m_{\rho_\circ}m_{\rho_\bullet}\leq 1$ and $\rho_{\circ}$ has geometric distribution with parameter $1-p\in(0,1)$: $\rho_{\circ}(k)=(1-p)p^k$ for $k\geq 0$. Then, the image of $\mathsf{GW}_{\rho_{\circ},\rho_{\bullet}}$ under $\JS$ is $\mathsf{GW}_{\rho}$, where
\[\rho(0)=1-p \quad \text{and} \quad \rho(k)=p\cdot\rho_{\bullet}(k-1), \quad k\in \N.\] In particular, $m_\rho-p=(1-p)m_{\rho_\circ}m_{\rho_\bullet}$, so that $(\rho_{\circ},\rho_{\bullet})$ is critical iff $\rho$ itself is critical.
\end{Prop}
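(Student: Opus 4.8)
The plan is to compute the pushforward measure directly from the combinatorial description of $\JS$. Fix a finite plane tree $\tr' \in \T_f$ with $\tr' \neq \{\emptyset\}$, and let $\tr := \JS^{-1}(\tr')$. The key structural fact, recalled just before the statement, is that $\JS$ maps each white vertex of $\tr$ (a vertex at even height) to a \emph{leaf} of $\tr'$, and each black vertex of $\tr$ with $k$ children to a vertex of $\tr'$ with exactly $k+1$ children; conversely, a leaf of $\tr'$ pulls back to a white vertex, and an internal vertex of $\tr'$ with $k+1 \geq 1$ children pulls back to a black vertex with $k$ children. So I would first observe that the vertices of $\tr'$ with at least one child are in bijection with $\tb$, and the leaves of $\tr'$ are in bijection with $\tw$.

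Next I would write out $\GWrr(\tr)$ using \eqref{eqn:GWTwoType} and substitute the geometric form of $\rho_\circ$. Since every white vertex $u \in \tw$ contributes $\rho_\circ(k_u) = (1-p)p^{k_u}$, the white part of the product is $(1-p)^{|\tw|}\, p^{\sum_{u\in\tw}k_u}$. Now $\sum_{u\in\tw}k_u$ counts the edges of $\tr$ emanating downward from white vertices, i.e. the number of black vertices, so this sum equals $|\tb|$. Hence the white contribution is $(1-p)^{|\tw|} p^{|\tb|}$. For the black part, each $u \in \tb$ contributes $\rho_\bullet(k_u)$. Translating through $\JS$: a leaf of $\tr'$ corresponds to a white vertex and contributes a factor $(1-p)$; an internal vertex of $\tr'$ with $j \geq 1$ children corresponds to a black vertex with $j-1$ children, contributing $p \cdot \rho_\bullet(j-1) = \rho(j)$; and each of the $|\tb|$ internal vertices also carries one of the $p$ factors from $p^{|\tb|}$, which is exactly the $p$ appearing in the definition $\rho(j) = p\,\rho_\bullet(j-1)$. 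Since $\rho(0) = 1-p$, the leaf factors $(1-p)$ are exactly $\rho(0)$. Collecting everything, $\GWrr(\tr) = \prod_{v \in \tr'}\rho(k_v) = \GWr(\tr')$, which is the claimed identity. The degenerate case $\tr = \tr' = \{\emptyset\}$ is handled separately by the convention $\JS(\{\emptyset\}) = \{\emptyset\}$ and $\GWr(\{\emptyset\}) = \rho(0) = 1-p = \GWrr(\{\emptyset\})$ (the empty tree in the two-type sense, a single white root with no children).

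Finally I would record the mean identity. Summing $k\,\rho(k)$ over $k \geq 1$ gives $m_\rho = \sum_{k\geq 1} k\, p\, \rho_\bullet(k-1) = p\sum_{j\geq 0}(j+1)\rho_\bullet(j) = p(m_{\rho_\bullet} + 1)$. On the other hand $m_{\rho_\circ} = \sum_{k\geq 0}k(1-p)p^k = p/(1-p)$, so $(1-p)m_{\rho_\circ}m_{\rho_\bullet} = p\, m_{\rho_\bullet} = m_\rho - p$, as stated; in particular $m_\rho = 1 \iff (1-p)m_{\rho_\circ}m_{\rho_\bullet} = 1-p \iff m_{\rho_\circ}m_{\rho_\bullet} = 1$, giving the criticality equivalence. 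The main point requiring care — and the only genuine obstacle — is the bookkeeping in the middle step: making sure that the identity $\sum_{u\in\tw}k_u = |\tb|$ is correctly invoked and that the single global factor $p^{|\tb|}$ is distributed one-per-black-vertex so as to reconstitute $\rho(k) = p\,\rho_\bullet(k-1)$ exactly, with no leftover or missing powers of $p$ or $(1-p)$. Everything else is a direct transcription of the bijection's action on vertex degrees.
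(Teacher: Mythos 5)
The paper does not supply a proof of this proposition; it cites it from Janson \& Stef\'ansson \cite[Appendix A]{janson_scaling_2015} (see also \cite[Proposition 3.6]{curien_percolation_2014}). Your argument is the standard one given in those references and it is correct: the white factors $(1-p)^{|\tw|}p^{|\tb|}$ come out of the geometric law together with the identity $\sum_{u\in\tw}k_u=|\tb|$, and the factor $p^{|\tb|}$ is then absorbed one power per black vertex to produce $\rho(k_u+1)=p\,\rho_\bullet(k_u)$, while the $(1-p)^{|\tw|}$ matches the $\rho(0)$ factors of the leaves of $\JS(\tr)$; the degenerate case $\{\emptyset\}$ and the mean computation $m_\rho=p(m_{\rho_\bullet}+1)$, $m_{\rho_\circ}=p/(1-p)$ are both handled correctly. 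No gap.
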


\begin{figure}[ht]
	\centering
	\includegraphics[scale=.8]{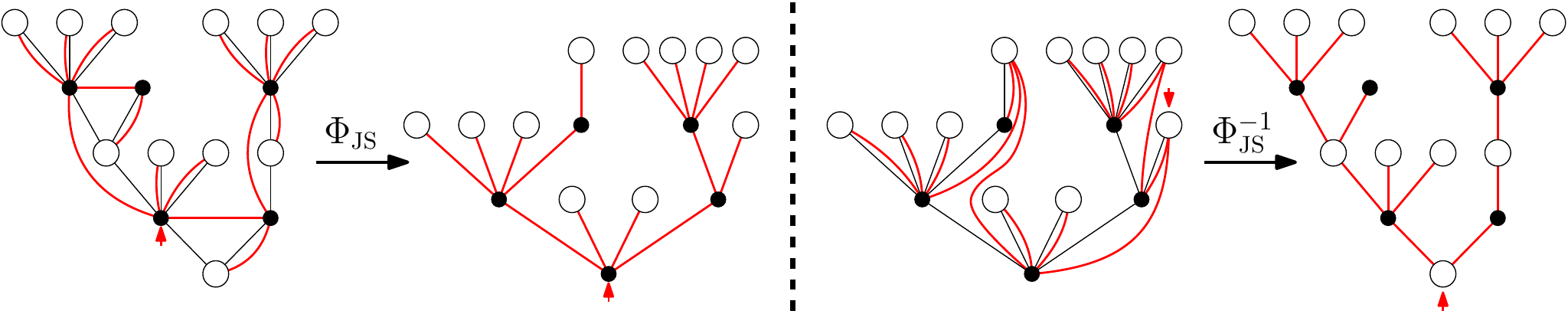}
	\caption{The Janson-Stef\'ansson bijection and its inverse application.}
	\label{fig:JS}
	\end{figure}

\subsection{Random looptrees and scooped-out maps}\label{sec:RandomLooptrees}

We now introduce random looptrees and their \textit{tree of components} to represent the boundary of a map as a tree, following the presentation of~\cite[Section 2.3]{curien_percolation_2014} (see also \cite{curien_random_2014}).  

\medskip

\noindent\textbf{Random looptrees.} A looptree is a map whose edges are incident to two distinct faces, one being the root face (such a map is called edge-outerplanar). Informally, a (finite) looptree is a collection of polygons glued along a tree structure. Their set is denoted by $\LT_f$.

We associate to every tree $\tr\in \T_f$ a looptree $\Loop(\tr)$ as
follows. For every $u\in\tb$, connect all the incident (white) vertices of $u$ in cyclic order. Then,
$\Loop(\tr)$ is the map obtained by discarding the black vertices and edges of $\tr$. The root edge of $\Loop(\tr)$ connects the origin
of $\tr$ to the last child of its first offspring. The inverse application associates to every looptree $\lt\in \LT_f$ a tree $\Tree(\lt)$, called the tree of components, as follows. We add an extra vertex in every internal face of $\lt$, which we connect by an edge to all the vertices of this face. The tree $\Tree(\lt)$ is obtained by discarding the edges of $\lt$. The root edge of $\Tree(\lt)$ connects the origin of $\lt$ to the vertex inside the internal face incident to the root. See Figure~\ref{fig:TreeAndLoop} for an example. 

\begin{figure}[ht]
  \centering
    \includegraphics[scale=.55]{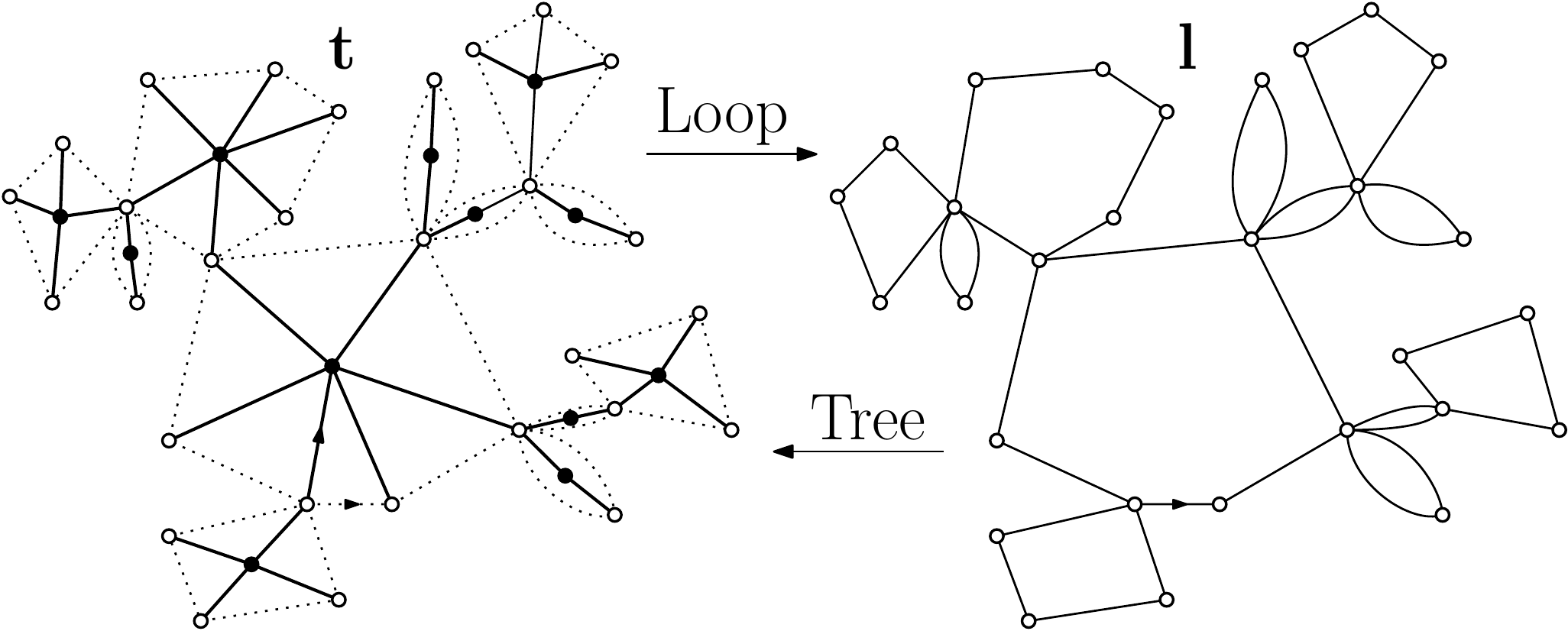}
  \caption{A looptree $\lt$ and the associated tree of components $\tr$.}
  \label{fig:TreeAndLoop}
\end{figure}

\begin{Rk}\label{rk:GluingLooptrees} Every internal face of $\lt\in\LT_f$ is rooted at the oriented edge whose origin is the closest to that of $\lt$, and such that the root face lies on its right. The gluing of a map with a simple boundary of perimeter $k$ into a face of degree $k$ is then determined by the convention that the root edges match.\end{Rk}

This definition of looptree slightly differs from that of \cite{curien_random_2014,curien_percolation_2014}, that we now recall. Given a tree $\tr\in\T_f$, the looptree $\Loopb(\tr)$ (or $\Loopb'(\tr)$ in \cite{curien_random_2014}) is built from $\tr$ as follows. For every $u,v\in \tr$, there is an edge between $u$ and $v$ iff one of these conditions is fulfilled: $u$ and $v$ are consecutive siblings in $\tr$, or $v$ is either the first or the last child of $u$ in $\tr$. We will also need $\Loopbb(\tr)$, which is obtained from $\Loopb(\tr)$ by contracting the edges linking a vertex of $\tr$ and its last child in $\tr$. These objects are rooted at the oriented edge between the origin of $\tr$ and its last child in $\tr$ (resp.\ penultimate for $\Loopbb$). See \cite[Figures 9 and 10]{curien_percolation_2014} for an example. We use the bold print $\Loopb$ to distinguish this construction from $\Loop$. Note that contrary to $\Loop$, $\Loopb$ does not allow several loops to be glued at the same vertex.

\medskip

\noindent\textbf{The scooped-out map.} The \textit{scooped-out} map of a map $\m$ was defined in \cite{curien_percolation_2014} as the looptree $\Scoop(\m)$ obtained from $\Bm$ by duplicating the edges whose both sides belong to the root face. We call tree of components of $\m$ the tree $\Treeb(\m):=\Tree(\Scoop(\m))$.

A map $\m$ is recovered from $\Scoop(\m)$ by gluing into its internal faces the proper maps with a simple boundary. These maps are the connected components obtained when splitting $\m$ at the pinch-points of $\partial\m$, called \textit{irreducible components} in \cite{bouttier_distance_2009} and \cite[Section 2.2]{curien_uniform_2015}. They have the same rooting convention as in Remark \ref{rk:GluingLooptrees}. This construction provides a bijection 
\[\TC: \m \mapsto \left(\Treeb(\m), \left(\ms_u : u \in \Treeb(\m)_\bullet\right)\right) \] that associates to a map $\m\in\M$ the tree $\tr=\Treeb(\m)$, whose vertices at odd height have even degree, and a collection $(\ms_u : u\in\tb)$ of maps with a simple boundary of respective perimeter $\deg(u)$. See Figure \ref{fig:ScoopedOutMap} for an example. The following relations will be useful:
\begin{equation}\label{eqn:PerimeterAndSizeOfTree}
	\vert \tr \vert = \#\Bm + 1 \quad \text{and} \quad \sum_{u\in \tb }{\deg(u)}=\#\Bm  \quad (\tr=\Treeb(\m)).
\end{equation}

\begin{figure}[ht]
	\centering
	\includegraphics[scale=.65]{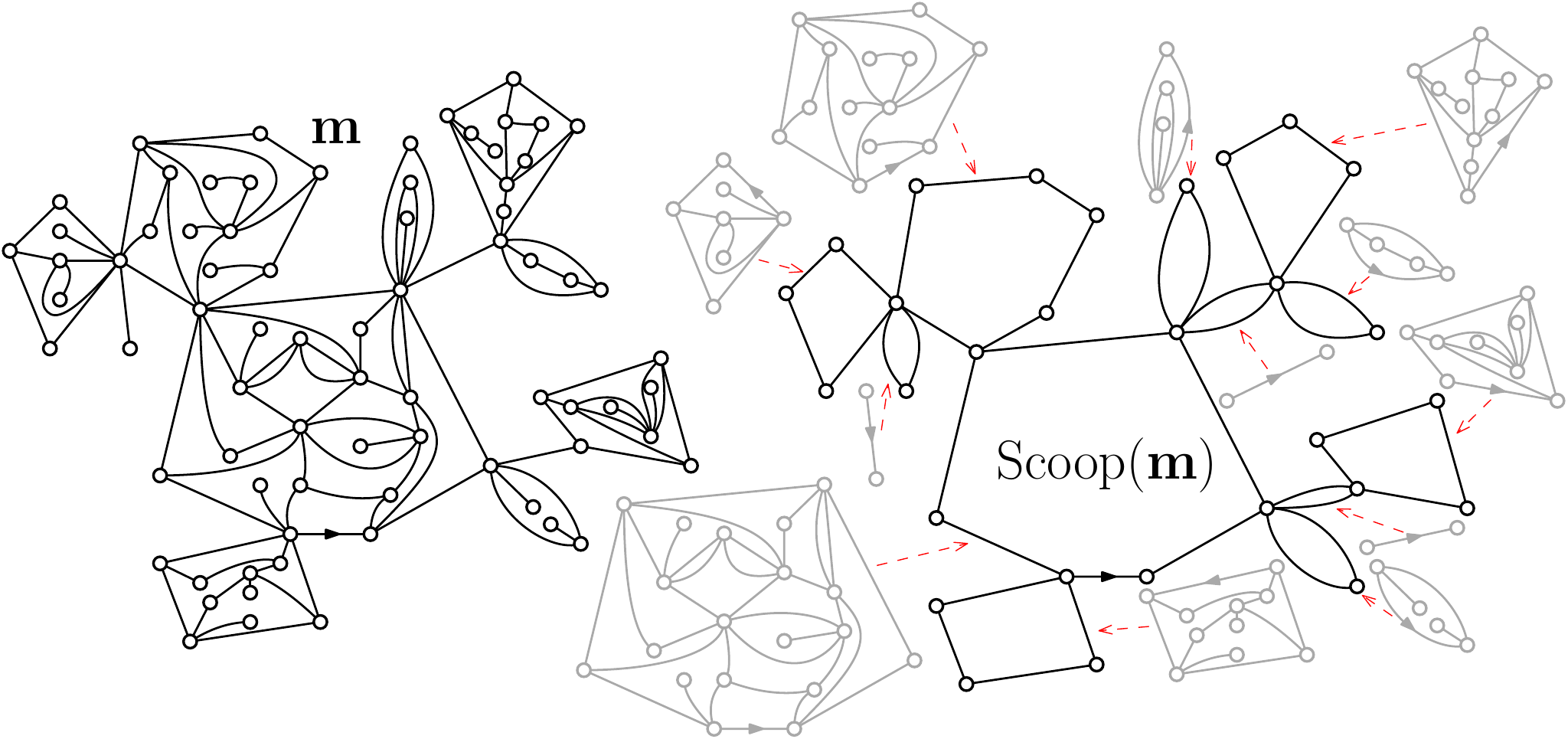}
	\caption{A planar map $\m$ and the associated scooped-out map $\Scoop(\m)$.}
	\label{fig:ScoopedOutMap}
	\end{figure}

\subsection{Distribution of the tree of components}\label{sec:LawTreeOfComponents}

We now introduce the probability measure $\Pqr$ with ``free perimeter" defined by
\begin{equation}\label{eqn:DefPqr}
	\Pqr(\m):=\frac{\rq^{\#\Bm/2}\wq(\m)}{F(\rq)}, \quad \m\in\M.
\end{equation} It is related to $\Pqk$ by conditioning with respect to the perimeter of the map: for every $k\geq 0$ and $\m\in\M$, we have $\Pqr\left(\m \mid \M_k\right)=\Pqk(\m)$. The main result of this section identifies the distribution of the tree of components (see also \cite[Proposition 6]{baur_uniform_2016} for quadrangulations).

\begin{Prop}\label{prop:LawTreeComponents}Let $\q$ be a weight sequence of type $a\in [3/2,5/2]$. Under $\Pqr$, $\Treeb(M)$ is a two-type Galton-Watson tree with offspring distribution $(\nuw,\nub)$ defined by
\[\nuw(k)=\frac{1}{F(\rq)}\left(1-\frac{1}{F(\rq)}\right)^k \quad \text{and} \quad \nub(2k+1)=\frac{1}{F(\rq)-1}\left(\rq F^2(\rq)\right)^{k+1}\Fs_{k+1}, \quad k\in \Z_+.\] (With $\nub(2\Z_+)=0$.) Moreover, conditionally on $\Treeb(M)$, the maps with a simple boundary $(\Mh_u : u\in \Treeb(M)_\bullet)$ associated to $M$ by $\TC$ are independent with respective law $\widehat{\mathbb{P}}^{(\deg(u)/2)}_{\q}$.
\end{Prop}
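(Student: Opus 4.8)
The plan is to combine the bijection $\TC$ from Section~\ref{sec:RandomLooptrees} with the relation $F(x)=\Fs(xF^2(x))$ of Lemma~\ref{lem:RelationFFs} and a direct computation of the Boltzmann weight through the decomposition. First I would record the weight identity underlying the bijection: if $\m\in\M$ corresponds to $(\tr,(\ms_u : u\in\tb))$ with $\tr=\Treeb(\m)$, then $\wq(\m)=\prod_{u\in\tb}\wq(\ms_u)$, since every internal face of $\m$ lies in exactly one irreducible component and the root face receives no weight. Using the relations \eqref{eqn:PerimeterAndSizeOfTree}, namely $\#\Bm=\sum_{u\in\tb}\deg(u)$ and $\#\tr=\#\Bm+1$, I would rewrite the ``free perimeter'' weight as
\[
\rq^{\#\Bm/2}\wq(\m)=\prod_{u\in\tb}\rq^{\deg(u)/2}\wq(\ms_u),
\]
so that, dividing by $F(\rq)$, the measure $\Pqr$ factorises over the black vertices of the tree.

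Next I would turn this product into a genuine two-type Galton-Watson weight of the form $\prod_{u\in\tw}\nuw(k_u)\prod_{u\in\tb}\nub(k_u)$. The key point is bookkeeping of the combinatorial factor $F(\rq)$: the number of black vertices, the number of white vertices, and the sum of black degrees are linked by $\#\tr=\#\Bm+1$ together with the alternating structure (each edge of $\tr$ joins a white and a black vertex, and $\deg(u)=k_u+1$ for $u\in\tb$ since $u$'s parent is also white). Writing $b=\#\tb$ and $w=\#\tw$ we have $w=1+\sum_{u\in\tb}k_u$ and $\#\Bm=\sum_{u\in\tb}(k_u+1)=b+\sum_{u\in\tb}k_u=b+w-1$; one checks $\#\tr=b+w=\#\Bm+1$ consistently. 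I would then distribute the normalising constant $1/F(\rq)$ and the geometric weights so that each white vertex with $k$ children contributes $\nuw(k)=F(\rq)^{-1}(1-F(\rq)^{-1})^k$ (a geometric law, using $\sum_k(1-F(\rq)^{-1})^k=F(\rq)$), and each black vertex with $k_u=2k$ children — note $\deg(u)=2k+1$ is forced to be odd since half-perimeters are integers, explaining $\nub(2\Z_+)=0$ — contributes $\nub(2k+1)$ times $\Fs_{k+1}(\rq F^2(\rq))^{k+1}$, the $(\rq F^2(\rq))^{k+1}$ absorbing the $\rq^{\deg(u)/2}$ factors and the leftover normalisation, and $\Fs_{k+1}$ being exactly the total weight of irreducible components of perimeter $2(k+1)$ to be glued in. This forces the stated formula $\nub(2k+1)=(F(\rq)-1)^{-1}(\rq F^2(\rq))^{k+1}\Fs_{k+1}$, and the constant $F(\rq)-1$ (rather than $F(\rq)$) appears because a black vertex has degree at least $2$, i.e.\ $k\geq 1$ in the perimeter, so the relevant sum is $\sum_{k\geq 1}(\rq F^2(\rq))^k\Fs_k=F(\rq F^2(\rq))-\Fs_0=F(\rq)-1$ by Lemma~\ref{lem:RelationFFs}.

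Once the weight of $\m$ under $\Pqr$ is shown to equal $\GWnn(\tr)\prod_{u\in\tb}\Pqks[\deg(u)/2](\ms_u)$ (with $\tr=\Treeb(\m)$), the proposition follows immediately: marginalising over the maps with simple boundary gives that $\Treeb(M)$ has law $\GWnn$, and the displayed identity exhibits the conditional law of $(\Mh_u)$ as the announced product of $\widehat{\mathbb{P}}^{(\deg(u)/2)}_{\q}$. I would also check that $(\nuw,\nub)$ is a genuine pair of probability measures: $\sum_k\nuw(k)=1$ is the geometric sum above, and $\sum_{k\geq 0}\nub(2k+1)=(F(\rq)-1)^{-1}\sum_{k\geq 1}(\rq F^2(\rq))^k\Fs_k=(F(\rq)-1)^{-1}(F(\rq)-1)=1$, again by Lemma~\ref{lem:RelationFFs}. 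The main obstacle I anticipate is purely bookkeeping: getting the powers of $\rq$, of $F(\rq)$, and of $\rq F^2(\rq)$ to match up correctly across the tree — in particular making precise which vertex ``pays'' the normalising constant and correctly handling the root (the tree $\Treeb(\m)$ is rooted so that its root vertex is white and has $\#\Bm$ in its subtree, and one must make sure the root white vertex is treated like any other white vertex, which is what the clean telescoping $w=1+\sum_{u\in\tb}k_u$ guarantees). No genuinely hard analysis is needed here; the enumerative input, and the only place analysis entered, is already packaged in Lemma~\ref{lem:RelationFFs} and the definition of $\Fs$.
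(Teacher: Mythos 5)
Your proof is correct and follows essentially the same route as the paper: factorise $\wq(\m)\rq^{\#\partial\m/2}$ through the $\TC$ decomposition into a product over black vertices, then use the alternating structure of $\Treeb(\m)$ (equivalently, the identity $|\tw|=1+\sum_{u\in\tb}k_u$ and $\sum_{u\in\tw}k_u=|\tb|$) to distribute the normalisation $1/F(\rq)$ and the geometric weights per vertex; the paper packages exactly this bookkeeping in two telescoping product identities applied with $c=1-1/F(\rq)$ and $c=F(\rq)$. One small slip to fix: you write ``each black vertex with $k_u=2k$ children --- note $\deg(u)=2k+1$ is forced to be odd'', but the parity is reversed: $\deg(u)=k_u+1$ is the perimeter of an internal face of the bipartite map $\Scoop(\m)$, hence \emph{even}, which forces $k_u$ to be \emph{odd}; this is what explains $\nub(2\Z_+)=0$, and your subsequent formula $\nub(2k+1)=(F(\rq)-1)^{-1}(\rq F^2(\rq))^{k+1}\Fs_{k+1}$ (with $k_u=2k+1$, $\deg(u)=2(k+1)$) is the right one.
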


\begin{proof}Let us check that $\nuw$ and $\nub$ are probability measures. This is clear for $\nuw$, and since $1<F(\rq)=\Fs(\rq F^2(\rq))$ we get
\[ \sum_{k\in \Z_+}\nub(k)=\frac{1}{F(\rq)-1}\left(\Fs(\rq F^2(\rq))-1\right)=1.\] Recall that $\TC$ associates to $\m\in\M$ its tree of components $\tr=\Treeb(\m)$ and maps $(\ms_u : u\in \tb)$ with a simple boundary of perimeter $\deg(u)$. Using \eqref{eqn:DefPqr} and (\ref{eqn:PerimeterAndSizeOfTree}), we have
\begin{align*}
	\Pqr(\m)=\frac{\rq^{\#\Bm/2}\wq(\m)}{F(\rq)}=\frac{1}{F(\rq)}\prod_{u\in\tb}\rq^{\deg(u)/2}\wq(\ms_u).
\end{align*} Then, for every $c>0$
\[1=\prod_{u\in\tw}{c^{k_u}\left(\frac{1}{c}\right)^{\vert \tb\vert}} \quad \text{and}  \quad \frac{1}{c}=\prod_{u\in\tb}{c^{k_u}\left(\frac{1}{c}\right)^{\vert \tw \vert}}. \] Applying the first identity with $c=1-1/F(\rq)$ and the second one with $c=F(\rq)$ yields
\begin{align*}
	\Pqr(\m)&=\prod_{u\in\tw}{\frac{1}{F(\rq)}\left(1-\frac{1}{F(\rq)}\right)^{k_u}}\prod_{u\in\tb}{\frac{1}{F(\rq)-1}\left(\rq F^2(\rq)\right)^{(k_u+1)/2}\wq(\ms_u)}\\
	&=\prod_{u\in\tw}{\nuw(k_u)}\prod_{u\in\tb}{\nub(k_u)\wq(\ms_u)\frac{1}{\Fs_{(k_u+1)/2}}}.\end{align*} By convention, both sides equal zero if there exists $u\in\tb$ such that $\Fs_{(k_u+1)/2}=0$. Finally,
\[\Pqr\left(\Treeb(M)=\tr, \Mh_u=\ms_u : u\in \tb\right)=\Pqr(M=\m)=\GWnn(\tr) \prod_{u\in\tb}\widehat{\mathbb{P}}^{(\deg(u)/2)}_{\q}(\ms_u),\] which is the expected result.\end{proof}

By Proposition \ref{prop:LawJSTransform}, we obtain the following.

\begin{Cor}\label{cor:LawTreeComponentsJS}Let $\q$ be a weight sequence of type $a\in [3/2,5/2]$. Under $\Pqr$, $\JS(\Treeb(M))$ is a Galton-Watson tree with offspring distribution $\nu$ defined by
\[\nu(2k)=\frac{1}{F(\rq)}\left(\rq F^2(\rq)\right)^{k}\Fs_{k}, \quad k\in \Z_+ \quad \text{(and } \nu(k)=0 \text{ for } k \text{ odd).}\]\end{Cor}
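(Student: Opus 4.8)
The plan is to combine Proposition \ref{prop:LawTreeComponents} with Proposition \ref{prop:LawJSTransform} directly. By Proposition \ref{prop:LawTreeComponents}, under $\Pqr$ the tree $\Treeb(M)$ is a two-type Galton-Watson tree with offspring distribution $(\nuw,\nub)$, and the first step is simply to observe that $\nuw$ is a geometric distribution: writing $p:=1-1/F(\rq)\in(0,1)$ (recall $F(\rq)>1$), we have $\nuw(k)=(1-p)p^k$ for $k\geq 0$, so the hypothesis of Proposition \ref{prop:LawJSTransform} is met with $\rho_\circ=\nuw$, $\rho_\bullet=\nub$. We also need $m_{\nuw}m_{\nub}\leq 1$, which holds because $\q$ is of type $a\in[3/2,5/2]$ means $\mu$ (hence, as one checks, the pair $(\nuw,\nub)$) is critical or subcritical; alternatively this is automatic from the fact that $\Treeb(M)$ is a.s.\ finite under $\Pqr$, so the two-type Galton-Watson tree it defines cannot be supercritical.

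Granting this, Proposition \ref{prop:LawJSTransform} tells us that the image of $\mathsf{GW}_{\nuw,\nub}$ under $\JS$ is the monotype Galton-Watson law $\mathsf{GW}_\nu$, where $\nu(0)=1-p=1/F(\rq)$ and $\nu(k)=p\cdot\nub(k-1)$ for $k\geq 1$. The second step is to substitute the explicit formula for $\nub$ from Proposition \ref{prop:LawTreeComponents}. Since $\nub$ is supported on odd integers, $\nu(k)=p\,\nub(k-1)$ is supported on even integers, which already gives $\nu(k)=0$ for $k$ odd. For $k=2j$ with $j\geq 1$ we get
\[
\nu(2j)=p\cdot\nub(2j-1)=\left(1-\frac{1}{F(\rq)}\right)\cdot\frac{1}{F(\rq)-1}\left(\rq F^2(\rq)\right)^{j}\Fs_{j}=\frac{1}{F(\rq)}\left(\rq F^2(\rq)\right)^{j}\Fs_{j},
\]
using $1-1/F(\rq)=(F(\rq)-1)/F(\rq)$. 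This matches the claimed formula, and the $j=0$ case $\nu(0)=1-p=\Fs_0/F(\rq)=1/F(\rq)$ (since $\Fs_0=1$, as $\dagger\in\Ms_0$) is consistent with the same expression at $j=0$, giving the unified statement $\nu(2k)=(\rq F^2(\rq))^k\Fs_k/F(\rq)$ for all $k\in\Z_+$.

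I do not expect any serious obstacle here — this is essentially a bookkeeping corollary, and the only point deserving a sentence of care is the verification that the criticality/subcriticality hypothesis $m_{\nuw}m_{\nub}\leq 1$ of Proposition \ref{prop:LawJSTransform} is legitimate, which follows either from the type assumption or, more robustly, from almost-sure finiteness of $\Treeb(M)$ under $\Pqr$ (which holds because $\Pqr$ is a probability measure on finite maps $\M$). A minor auxiliary remark one might include is that $\JS$ is a bijection on finite trees (as recalled in Section \ref{sec:TreesAndJS}), so pushing forward the law of $\Treeb(M)$ under $\Pqr$ by $\JS$ is well-defined and yields exactly the law of $\JS(\Treeb(M))$ claimed. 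That completes the proof.
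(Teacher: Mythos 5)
Your proof is correct and follows exactly the route the paper takes — the paper introduces this corollary with the single phrase ``By Proposition \ref{prop:LawJSTransform}, we obtain the following,'' leaving the bookkeeping (geometric form of $\nuw$, substitution of $\nub$, the telescoping $(1-1/F(\rq))\cdot(F(\rq)-1)^{-1}=1/F(\rq)$, and $\Fs_0=1$) implicit, which you have spelled out correctly. Your added remark that $m_{\nuw}m_{\nub}\leq 1$ follows from the a.s.\ finiteness of $\Treeb(M)$ under $\Pqr$ is a legitimate and self-contained justification of a hypothesis the paper only confirms a posteriori in Lemma \ref{lem:CriticityJSTree}.
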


As a consequence, the generating function of $\nu$ reads
\begin{equation}\label{eqn:GFnu}
	G_{\nu}(s)=\frac{1}{F(\rq)}\sum_{k=0}^\infty s^{2k}\left(\rq F^2(\rq) \right)^{k}\Fs_{k}=\frac{1}{F(\rq)}\Fs\left(\rq F^2(\rq) s^2\right), \quad s\in[0,1].
\end{equation} From Lemma \ref{lem:RelationFFs}, we easily deduce the following formula for the mean of $\nu$ 
\begin{equation}\label{eqn:MeanNu}
	\mnu=G'_{\nu}(1)=\frac{1}{F(\rq)}2\rq F^2(\rq) \Fs'\left(\rq F^2(\rq)\right) = \frac{1}{1+\frac{F(\rq)}{2\rq F'(\rq)}}.
\end{equation}  
Similarly, the generating function of $\nub$ satisfies $G_{\nub}(0)=0$ and 
\begin{equation}\label{eqn:GFnub}
	G_{\nub}(s)=\frac{1}{F(\rq)-1}\cdot\frac{1}{s}\left(\Fs\left(\rq F^2(\rq) s^2\right)-1\right), \quad s\in(0,1].
\end{equation} The next result is a consequence of (\ref{eqn:AsymptoticsFkSimplified}), \eqref{eqn:MeanNu} and Proposition \ref{prop:LawJSTransform}.

\begin{Lem}\label{lem:CriticityJSTree}
	The offspring distribution $\nu$ and the pair of offspring distributions $(\nuw,\nub)$ are critical if $a\in [3/2,2)$ and subcritical if $a\in(2,5/2]$.
\end{Lem}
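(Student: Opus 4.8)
The plan is to read off the criticality of $\nu$ directly from the mean formula \eqref{eqn:MeanNu}, and then transfer this to the pair $(\nuw,\nub)$ via Proposition \ref{prop:LawJSTransform}. First I would note that by \eqref{eqn:MeanNu},
\[\mnu = \frac{1}{1+\frac{F(\rq)}{2\rq F'(\rq)}},\]
so that $\mnu = 1$ precisely when $F'(\rq)=+\infty$ (interpreting the formula in the limit), $\mnu<1$ when $F'(\rq)<\infty$, and one never has $\mnu>1$. Hence everything reduces to deciding whether the derivative $F'$ blows up at the radius of convergence $\rq=1/(4\Zq)$.

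The second step is to extract this from the asymptotics \eqref{eqn:AsymptoticsFkSimplified}, namely $F_k \sim c_a(4\Zq)^k\ell(k)k^{-a}$ with $a\in[3/2,5/2]$. The coefficients of $F'$ are $kF_k\rq^{k-1}$, which up to constants behave like $\ell(k)k^{1-a}$. When $a\in[3/2,2)$ we have $1-a>-1$, so $\sum_k \ell(k)k^{1-a}$ diverges and therefore $F'(\rq)=\sum_k kF_k\rq^{k-1}=+\infty$; when $a\in(2,5/2]$ we have $1-a<-1$, the series converges, and $F'(\rq)<\infty$. (The excluded case $a=2$ is handled separately, as announced.) Plugging into the displayed expression for $\mnu$ gives $\mnu=1$ for $a\in[3/2,2)$ and $\mnu<1$ for $a\in(2,5/2]$; equivalently, $\nu$ is critical in the former range and subcritical in the latter. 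Alternatively, and perhaps more cleanly, one can argue from the singular expansion of $F$ in Proposition \ref{prop:SingularExpansionF}: for $a\in[3/2,2)$ the leading correction is $(1-x/\rq)^{a-1}$ with $a-1<1$, whose derivative blows up as $x\to\rq^-$, whereas for $a\in(2,5/2]$ the expansion contains a genuine linear term $-\rq F'(\rq)(1-x/\rq)$ with the singular part of order $a-1>1$ contributing a vanishing derivative, so $F'(\rq)=F'(\rq)$ is finite and equals the stated value.

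Finally, for the pair $(\nuw,\nub)$: by construction $\nuw$ is geometric with parameter $1-p$ where $p=1-1/F(\rq)$, so Proposition \ref{prop:LawJSTransform} applies to $(\nuw,\nub)$ with image $\nu$ under $\JS$ (this is exactly the content of Corollary \ref{cor:LawTreeComponentsJS} together with Proposition \ref{prop:LawTreeComponents}), and the last sentence of Proposition \ref{prop:LawJSTransform} states that $(\nuw,\nub)$ is critical if and only if $\nu$ is critical. Since $\mnuw\mnub \le 1$ always holds in this setup (again by Proposition \ref{prop:LawJSTransform}, as $m_\rho - p = (1-p)m_{\rho_\circ}m_{\rho_\bullet}$ forces $\mnuw\mnub \le 1$ whenever $\mnu \le 1$), criticality of $\nu$ is equivalent to criticality of $(\nuw,\nub)$ and subcriticality to subcriticality. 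This completes the argument. The only mildly delicate point is the Tauberian/Abelian step identifying divergence versus convergence of $F'(\rq)$ from the tail behaviour of $F_k$ and its slowly varying correction; but with the asymptotics \eqref{eqn:AsymptoticsFkSimplified} already in hand (and the standard fact that $\sum \ell(k)k^{-s}$ converges iff $s>1$ for slowly varying $\ell$), this is routine and poses no real obstacle.
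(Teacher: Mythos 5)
Your argument is correct and follows exactly the route the paper intends: the paper states (without detail) that the lemma is a consequence of \eqref{eqn:AsymptoticsFk}, \eqref{eqn:MeanNu} and Proposition \ref{prop:LawJSTransform}, which are precisely the three ingredients you assemble, with the pivot being finiteness of $F'(\rq)$ decided by the exponent $a-1$ in $kF_k\rq^{k-1}\asymp\ell(k)k^{1-a}$.
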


We now describe $\nu$ and $\nub$ using Proposition \ref{prop:SingularExpansionFs}, \eqref{eqn:GFnu} and \eqref{eqn:GFnub}. For $a=3/2$, as $t\rightarrow 0^+$
\begin{align}\label{eqn:LaplaceNu1}
  L_{\nu}(t)&=1-t+\left(1+4\cs_{3/2}\right)t^2+o(t^2),\\\label{eqn:LaplaceNub1}
  L_{\nub}(t)&=1-\left(\frac{1}{F(\rq)-1}\right)t+\left(\frac{1}{2} +4\cs_{3/2}\frac{F(\rq)}{F(\rq)-1}\right)t^2+o(t^2).
\end{align} For $a\in(3/2,2)$, as $t\rightarrow 0^+$
\begin{align}\label{eqn:LaplaceNu2}
  L_{\nu}(t)&=1-t+2^{\frac{1}{a-1}}t^{\frac{1}{a-1}}\ls(1/t) +o\left(t^{\frac{1}{a-1}}\ls(1/t)\right),\\\label{eqn:LaplaceNub2}
  L_{\nub}(t)&=1-\left(\frac{1}{F(\rq)-1}\right)t+\frac{F(\rq)}{F(\rq)-1}2^{\frac{1}{a-1}}t^{\frac{1}{a-1}}\ls(1/t) +o\left(t^{\frac{1}{a-1}}\ls(1/t)\right).
\end{align}Finally, for $a\in (2,5/2]$, as $t\rightarrow 0^+$,
\begin{align}\label{eqn:LaplaceNu3}
  L_{\nu}(t)&=1-\cs_a t+2^{a-1}t^{a-1}\ls(1/t) +o\left(t^{a-1}\ls(1/t)\right),\\\label{eqn:LaplaceNub3}
  L_{\nub}(t)&=1-\left(1-\cs_a \frac{F(\rq)}{F(\rq)-1}\right)t+\frac{F(\rq)}{F(\rq)-1}2^{a-1}t^{a-1}\ls(1/t) +o\left(t^{a-1}\ls(1/t)\right).
\end{align} The function $\ls(x):=\ls_1(1/(1-\exp(-2/x)))$ is slowly varying from \cite[Proposition 1.5.7]{bingham_regular_1989}. For $a=3/2$, \eqref{eqn:LaplaceNu1} and (\ref{eqn:ConstantsHat}) entail that $\nu$ and $\nub$ have finite variance equal to
\begin{equation}\label{eqn:ConclusionNu1}
	\snu=\left(\frac{2P(\rq)}{\kappa'_{3/2}}\right)^2=\left(\frac{F(\rq)}{\Zq(1-\mmu)}\right)^2 \quad \text{and} \quad \snub=\frac{F(\rq)}{F(\rq)-1}\left(\left(\frac{F(\rq)}{\Zq(1-\mmu)}\right)^2-1\right).
\end{equation} For $a\in(3/2,2)$, Karamata's Tauberian theorem~\cite[Theorem 8.1.6]{bingham_regular_1989}, (\ref{eqn:LaplaceNu2}) and \eqref{eqn:LaplaceNub2} give\begin{equation}\label{eqn:ConclusionNu2}
	\nu([k,\infty))\underset{k\rightarrow \infty}{\sim} \frac{2^{\frac{1}{a-1}}}{\left\vert \Gamma\left(\frac{a-2}{a-1}\right) \right\vert} \cdot \frac{\ls(k)}{k^{\frac{1}{a-1}}} \quad \text{and} \quad \nub([k,\infty))\underset{k\rightarrow \infty}{\sim} \frac{F(\rq)}{F(\rq)-1}\cdot\frac{2^{\frac{1}{a-1}}}{\left\vert \Gamma\left(\frac{a-2}{a-1}\right) \right\vert} \cdot \frac{\ls(k)}{k^{\frac{1}{a-1}}}.
\end{equation} Finally, when $a\in(2,5/2]$, the same version of Karamata's Tauberian theorem gives
\begin{equation}\label{eqn:ConclusionNu3}
	\nu([k,\infty))\underset{k\rightarrow \infty}{\sim} \frac{2^{a-1}}{\vert \Gamma\left(2-a\right)\vert} \cdot \frac{\ls(k)}{k^{a-1}} \quad \text{and} \quad \nub([k,\infty))\underset{k\rightarrow \infty}{\sim} \frac{F(\rq)}{F(\rq)-1}\cdot\frac{2^{a-1}}{\vert \Gamma\left(2-a\right)\vert} \cdot \frac{\ls(k)}{k^{a-1}}.
\end{equation} 

\begin{Prop}\label{prop:PropertyNu}
	For $a=3/2$, $\nu$ and $\nub$ have finite variance $($and exponential moments iff $\rsq>\rq F^2(\rq)$$)$. For $a\in(3/2,2)$, $\nu$ and $\nub$ are in the domain of attraction of a stable law with parameter $1/(a-1)\in (1,2)$ and for $a\in(2,5/2]$, $\nu$ and $\nub$ are in the domain of attraction of a stable law with parameter $a-1\in(1,3/2]$.
\end{Prop}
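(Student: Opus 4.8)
The plan is to read off the claimed statement directly from the Laplace-transform expansions \eqref{eqn:LaplaceNu1}--\eqref{eqn:LaplaceNub3} that were established just above, by invoking standard Tauberian/Abelian theory. Recall the classical correspondence between the behaviour of a probability generating function (equivalently, its Laplace transform $L_{\nu}(t)=G_{\nu}(e^{-t})$) near $t=0^+$ and the tail of the distribution: if $L_{\nu}(t)=1-\mnu t + c\,t^{\beta}\ell^{*}(1/t)+o(t^{\beta}\ell^{*}(1/t))$ with $\beta\in(1,2)$ and $\ell^{*}$ slowly varying, then $\nu$ has mean $\mnu<\infty$ but infinite variance, and by Karamata's Tauberian theorem $\nu([k,\infty))\sim \frac{c}{|\Gamma(1-\beta)|}\,k^{-\beta}\ell^{*}(k)$, which is precisely the condition for $\nu$ to lie in the domain of attraction of a stable law with index $\beta$ (see \cite[Theorem 8.1.6]{bingham_regular_1989} and the criterion for stable domains of attraction, e.g.\ \cite[\S XVII.5]{feller} or \cite[Chapter 8]{bingham_regular_1989}). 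Conversely, a quadratic correction term $L_{\nu}(t)=1-\mnu t+\tfrac{\sigma^2+\mnu^2-\mnu}{2}t^2+o(t^2)$ signals finite variance $\sigma^2$.

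First I would treat $a=3/2$: here \eqref{eqn:LaplaceNu1} and \eqref{eqn:LaplaceNub1} both have a genuine $t^2$ term with positive coefficient (using $\cs_{3/2}>-1/8$ coming from \eqref{eqn:ConstantsHat}, or rather just that the coefficient of $t^2$ is finite), so $\nu$ and $\nub$ have finite variance, with the explicit values already computed in \eqref{eqn:ConclusionNu1}. The statement about exponential moments iff $\rsq>\rq F^2(\rq)$ follows from Corollary \ref{cor:LawTreeComponentsJS} and \eqref{eqn:GFnu}: since $G_{\nu}(s)=\Fs(\rq F^2(\rq)s^2)/F(\rq)$, the radius of convergence of $G_{\nu}$ as a power series is $\sqrt{\rsq/(\rq F^2(\rq))}$, which exceeds $1$ exactly when $\rsq>\rq F^2(\rq)$; the same holds for $\nub$ by \eqref{eqn:GFnub}. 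Next, for $a\in(3/2,2)$ I would apply the above Abelian/Tauberian dictionary with exponent $\beta=1/(a-1)\in(1,2)$ to \eqref{eqn:LaplaceNu2}--\eqref{eqn:LaplaceNub2}; the tail asymptotics are exactly \eqref{eqn:ConclusionNu2}, and regular variation of the tail with index $-\beta$, $\beta\in(1,2)$, is equivalent to membership in the domain of attraction of the stable law of index $\beta$. For $a\in(2,5/2]$ I would argue identically using \eqref{eqn:LaplaceNu3}--\eqref{eqn:LaplaceNub3} and exponent $\beta=a-1\in(1,3/2]$, with tails \eqref{eqn:ConclusionNu3}; note that at $a=5/2$ we get $\beta=3/2$, consistent with the ``generic critical'' label, and the slowly varying function $\ls$ is then constant.

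The only point that requires a little care — and what I expect to be the main (minor) obstacle — is verifying that the correction exponents actually fall in the window $(1,2)$ so that the one-sided Tauberian theorem \cite[Theorem 8.1.6]{bingham_regular_1989} applies without a second moment, and that the candidate tail function is genuinely regularly varying. For $a\in(3/2,2)$ one has $1/(a-1)\in(1,2)$, and for $a\in(2,5/2]$ one has $a-1\in(1,3/2]$; in both cases the mean is finite (the coefficient of $t$ in the Laplace expansions is a finite positive constant) while the putative variance diverges, so the first-order term and the regularly varying correction together pin down the domain of attraction. Slow variation of $\ls$ was already recorded (it is $\ls_1(1/(1-e^{-2/x}))$, slowly varying by \cite[Proposition 1.5.7]{bingham_regular_1989}), and slow variation is preserved under the Tauberian passage, so $k\mapsto k^{-\beta}\ls(k)$ is regularly varying of index $-\beta$ as needed. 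Assembling these three cases gives the Proposition.
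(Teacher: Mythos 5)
Your proposal is correct and follows essentially the same route as the paper: the Proposition is stated as a summary of the calculations \eqref{eqn:LaplaceNu1}--\eqref{eqn:LaplaceNub3}, \eqref{eqn:ConclusionNu1}--\eqref{eqn:ConclusionNu3} carried out just above it, where Karamata's Tauberian theorem is applied to the Laplace-transform expansions exactly as you describe, and the regular variation of the tails with index $-1/(a-1)$ (resp.\ $-(a-1)$) pins down the stable domain of attraction. Your additional remark that the exponential-moment statement at $a=3/2$ follows from $G_{\nu}(s)=\Fs(\rq F^2(\rq)s^2)/F(\rq)$ having radius of convergence $>1$ iff $\rsq>\rq F^2(\rq)$ is a correct way to make explicit a point the paper leaves implicit.
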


For every $n\geq 1$, let $\GWr^{(n)}$ (resp.\ $\GWrr^{(n)}$) be the law of a Galton-Watson tree with offspring distribution $\rho$ (resp.\ $(\rhow,\rhob)$) conditioned to have $n$ vertices, provided this makes sense. We have the following conditioned version of Proposition \ref{prop:LawTreeComponents} and Corollary \ref{cor:LawTreeComponentsJS}.

\begin{Cor}\label{cor:LawConditionedTreeComponents} Let $\q$ be a weight sequence of type $a\in[3/2,5/2]$. Under $\Pqk$, $\Treeb(M)$ has law $\GWnn^{(2k+1)}$, and $\JS(\Treeb(M))$ has law $\GWn^{(2k+1)}$. Moreover, conditionally on $\Treeb(M)$, the maps $(\Mh_u : u\in \Treeb(M)_\bullet)$ associated to $M$ by $\TC$ are independent with law $\widehat{\mathbb{P}}^{(\deg(u)/2)}_{\q}$.
\end{Cor}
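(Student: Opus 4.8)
The plan is to deduce Corollary \ref{cor:LawConditionedTreeComponents} from the unconditioned statements (Proposition \ref{prop:LawTreeComponents} and Corollary \ref{cor:LawTreeComponentsJS}) by a straightforward conditioning argument, the point being to match the number-of-vertices parameter correctly. First I would recall that, by the remark following \eqref{eqn:DefPqr}, the law $\Pqk$ is exactly $\Pqr$ conditioned on the event $\{M\in\M_k\}$, i.e.\ on $\{\#\Bm=2k\}$. By the first relation in \eqref{eqn:PerimeterAndSizeOfTree}, $\#\Bm=2k$ is the same event as $\{|\Treeb(M)|=2k+1\}$. Since $\Treeb(\cdot)$ is (the tree part of) a bijection $\TC$, conditioning $\Pqr$ on $\{|\Treeb(M)|=2k+1\}$ and reading off the law of $\Treeb(M)$ gives, by Proposition \ref{prop:LawTreeComponents}, precisely $\GWnn$ conditioned on having $2k+1$ vertices, that is $\GWnn^{(2k+1)}$. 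One must note here that this conditioning makes sense: $\GWnn(\{|\tr|=2k+1\})>0$ because, e.g., the tree consisting of the root (white) with $k$ black children each a leaf has $2k+1$ vertices and positive $\GWnn$-mass (as $\nuw(k)>0$ for all $k$ and $\nub(1)=\rq F^2(\rq)\Fs_1/(F(\rq)-1)>0$).

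Next I would transfer this to the Janson--Stef\'ansson image. Since $\JS$ is a bijection on $\T_f$ that preserves the number of vertices (it only redefines edges), the event $\{|\Treeb(M)|=2k+1\}$ coincides with $\{|\JS(\Treeb(M))|=2k+1\}$, and Corollary \ref{cor:LawTreeComponentsJS} together with the same conditioning yields that under $\Pqk$ the tree $\JS(\Treeb(M))$ has law $\GWn^{(2k+1)}$; alternatively this follows from the previous paragraph and the fact that the $\JS$-image of $\GWnn^{(2k+1)}$ is $\GWn^{(2k+1)}$, which is immediate from Proposition \ref{prop:LawJSTransform} applied at fixed size. Since $\nuw$ is geometric, Proposition \ref{prop:LawJSTransform} indeed applies and identifies the offspring law $\nu$ as in Corollary \ref{cor:LawTreeComponentsJS}.

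Finally, for the conditional independence of the maps with a simple boundary $(\Mh_u : u\in\Treeb(M)_\bullet)$: under $\Pqr$, Proposition \ref{prop:LawTreeComponents} states that, given $\Treeb(M)=\tr$, these maps are independent with $\Mh_u\sim\Pqks$ for $k=\deg(u)/2$. The conditioning event $\{|\Treeb(M)|=2k+1\}$ is measurable with respect to $\Treeb(M)$ alone, so conditioning further on it does not disturb the conditional law of $(\Mh_u)$ given $\Treeb(M)$; hence the same independence and marginals persist under $\Pqk$. More concretely, one just repeats the factorisation in the proof of Proposition \ref{prop:LawTreeComponents}: for any admissible tree $\tr$ with $|\tr|=2k+1$ and maps $(\ms_u)$,
\[
\Pqk\!\left(\Treeb(M)=\tr,\ \Mh_u=\ms_u:u\in\tb\right)
=\frac{\Pqr\!\left(\Treeb(M)=\tr,\ \Mh_u=\ms_u:u\in\tb\right)}{\Pqr\!\left(|\Treeb(M)|=2k+1\right)}
=\frac{\GWnn(\tr)}{\GWnn(\{|\tr|=2k+1\})}\prod_{u\in\tb}\Pqks(\ms_u),
\]
with $k=\deg(u)/2$ in each factor, which is exactly the claimed description. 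I do not expect a genuine obstacle here; the only point requiring a word of care is the non-degeneracy of the conditioning (positivity of $\GWnn(\{|\tr|=2k+1\})$, equivalently that $F_k>0$ so that $\Pqk$ is well defined), which is clear from admissibility of $\q$ and the formula for $F_k$ in \eqref{eqn:IntegralFk}, and the observation that $|\cdot|$ is $\JS$-invariant.
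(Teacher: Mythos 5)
Your argument is exactly the conditioning argument the paper intends (the paper states Corollary~\ref{cor:LawConditionedTreeComponents} without proof, as an immediate ``conditioned version'' of Proposition~\ref{prop:LawTreeComponents} and Corollary~\ref{cor:LawTreeComponentsJS}), and it is correct: $\Pqk=\Pqr(\,\cdot\mid\M_k)$, the event $\{M\in\M_k\}$ equals $\{|\Treeb(M)|=2k+1\}=\{|\JS(\Treeb(M))|=2k+1\}$ by \eqref{eqn:PerimeterAndSizeOfTree} and the fact that $\JS$ is a vertex-set-preserving bijection, and this event is $\sigma(\Treeb(M))$-measurable so the regular conditional law of $(\Mh_u)$ given $\Treeb(M)$ is unchanged.

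One small slip in your positivity check: the tree you describe, ``root (white) with $k$ black children each a leaf,'' has $k+1$ vertices (not $2k+1$) and zero $\GWnn$-mass, since $\nub(0)=0$ (the black offspring law is supported on odd integers). Presumably you meant that each black child itself carries a single white leaf, giving $2k+1$ vertices and mass $\nuw(k)\,\nub(1)^k\,\nuw(0)^k$; this does work, and the needed fact $\Fs_1>0$ follows from Lemma~\ref{lem:RelationFFs}, which gives $\Fs_1=F_1$ after extracting the $x^1$-coefficient. In any case, your fallback argument is the cleanest: $\GWnn(|T|=2k+1)=\Pqr(\M_k)=F_k\rq^k/F(\rq)>0$ because $F_k>0$ for all $k$ by \eqref{eqn:IntegralFk} (the integrand $(u Z_{\q(u)})^k$ is positive since $Z_{\q(u)}\geq 1$).
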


\section{Scaling limits of the boundary of Boltzmann maps}\label{sec:ScalingLimitsSection}

This section deals with the scaling limits of the boundary of Boltzmann maps in the Gromov-Hausdorff sense. We refer to~\cite{burago_course_2001} for a complete definition of this topology. We start with a preliminary result directly adapted from \cite[Lemma 4.3]{curien_percolation_2014}.

\begin{Lem}{\textup{\cite{curien_percolation_2014}}}\label{lem:ScoopAndLoop} For every $\m\in\M$, we have $\Scoop(\m)=\Loopbb(\JS(\Treeb(\m)))$.
\end{Lem}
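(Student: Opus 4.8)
The plan is to reduce the statement to a purely combinatorial identity between the two looptree constructions and then to unwind the Janson--Stef\'ansson bijection. Since $\Treeb(\m)$ is by definition $\Tree(\Scoop(\m))$, since $\Scoop(\m)$ is a looptree, and since $\Loop$ and $\Tree$ are mutually inverse bijections (Section \ref{sec:RandomLooptrees}), one gets $\Scoop(\m)=\Loop(\Treeb(\m))$ for free. Thus the lemma is equivalent to the combinatorial identity $\Loop(\tr)=\Loopbb(\JS(\tr))$ applied to $\tr=\Treeb(\m)$, which I would prove for an arbitrary finite plane tree $\tr$ (the case $\tr=\{\emptyset\}$ being immediate).

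To prove $\Loop(\tr)=\Loopbb(\JS(\tr))$, I would match the two looptrees face by face and vertex by vertex. The internal faces of $\Loop(\tr)$ are indexed by the black vertices $u\in\tb$, the face of $u$ being the cycle visiting in cyclic order the white vertices $\uh,u1,\dots,uk_u$ incident to $u$ in $\tr$. Likewise the internal faces of $\Loopbb(\JS(\tr))$ are indexed by the internal vertices of $\JS(\tr)$, which are exactly the elements of $\tb$ (a vertex $u\in\tb$ with $k_u$ children in $\tr$ has $k_u+1$ children in $\JS(\tr)$). Unwinding the description of $\JS$ from Section \ref{sec:TreesAndJS}, the children of $u\in\tb$ in $\JS(\tr)$, read in order, come from $u1,\dots,uk_u$ and then from the parent $\uh$; in particular the parent of $u$ in $\tr$ supplies the \emph{last} child of $u$ in $\JS(\tr)$. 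Hence the $(k_u+2)$-cycle of $u$ in $\Loopb(\JS(\tr))$ loses exactly the edge from $u$ to its last child when passing to $\Loopbb$, and becomes a $(k_u+1)$-cycle. A short computation with the $\JS$-conventions shows that the cascade of contractions identifies, for each $w\in\tw$, the vertex $w$ with all its black children $w1,\dots,wk_w$, so that the vertices of $\Loopbb(\JS(\tr))$ are canonically indexed by $\tw$, just as those of $\Loop(\tr)$; under this indexing the $(k_u+1)$-cycle above visits the classes of $\uh,u1,\dots,uk_u$ in cyclic order, matching the face of $u$ in $\Loop(\tr)$. One concludes with a check that the root-edge conventions agree: the root of $\Loop(\tr)$ joins the origin of $\tr$ to the last child of its first offspring, while $\Loopbb(\JS(\tr))$ is rooted at the origin of $\JS(\tr)$ and its penultimate child there, and these correspond under the above dictionary.

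The main obstacle is precisely this bookkeeping: extracting from the somewhat intricate definition of $\JS$ the ordered list of children of a black vertex (in particular the fact that the parent becomes the last child), and then tracking the chain of edge contractions defining $\Loopbb$ so as to recover both the correct vertex identifications and the correct root edge. The boundary cases --- a black vertex incident to the root $\emptyset$ of $\tr$, white leaves, and the penultimate-child rooting convention --- each require a separate but routine verification. None of this is deep: it is the direct transcription of \cite[Lemma 4.3]{curien_percolation_2014} with $\JS(\Treeb(\m))$ playing the role of the tree used there, and Figures \ref{fig:JS} and \ref{fig:TreeAndLoop} make the manipulations transparent.
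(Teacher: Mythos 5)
Your proposal is correct and follows essentially the same route as the paper, which states this lemma without proof by citing it as a direct adaptation of \cite[Lemma 4.3]{curien_percolation_2014}: reduce to the combinatorial identity $\Loop(\tr)=\Loopbb(\JS(\tr))$ via $\Scoop(\m)=\Loop(\Treeb(\m))$, then match faces and contraction classes through the $\JS$-bijection. You supply more of the bookkeeping than the paper does (in particular the observations that contraction classes in $\Loopbb(\JS(\tr))$ are the chains $\{w,w1,\dots,wk_w\}$ for $w\in\tw$, and that $\uh$ yields the last $\JS$-child of $u\in\tb$), but the underlying argument is the one the cited lemma carries out.
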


\medskip

\noindent\textbf{Scaling limits: the dense regime.} We first focus on the dense phase $a\in(3/2,2)$ and prove Theorem \ref{th:ScalingDense}. The proof parallels that of \cite[Theorem 1.2]{curien_percolation_2014}.

\begin{proof}[Proof of Theorem \ref{th:ScalingDense}.] For every $k\geq 0$, let $M_k$ be a random map with law $\Pqk$ and set $T_k:=\JS(\Treeb(M_k))$. By definition of $\Loopbb$, we have
\begin{equation}\label{eqn:GHDistanceLoop}
	\dgh\left(\Loopb(T_k),\Loopbb(T_k)\right)\leq 2 H(T_k),
\end{equation} where $H(T_k)$ is the overall height of $T_k$. Indeed, the longest path of vertices of $T_k$ that are identified in $\Loopbb(T_k)$ has length at most $H(T_k)$. By scaling limits results for conditioned Galton-Watson trees (\cite[Theorem 3.1]{duquesne_limit_2003}, \cite[Theorem 3]{kortchemski_simple_2013}) we have that
\begin{equation}\label{eqn:CvgProbaHeight}
	\frac{H(T_k)}{k^{a-1}}\underset{k\rightarrow \infty}{\longrightarrow} 0 \quad \text{in probability.} 
\end{equation} The results of \cite{duquesne_limit_2003,kortchemski_simple_2013} together with \eqref{eqn:CvgProbaHeight} ensure that the invariance principle of \cite[Theorem 4.1]{curien_random_2014} applies: there exists a slowly varying function $\Lambda$ such that in the Gromov-Hausdorff sense
\[\frac{\Lambda(k)}{(2k)^{a-1}}\cdot \Loopb(T_k) \underset{k\rightarrow \infty}{\overset{(d)}{\longrightarrow}} \Lts_{\frac{1}{a-1}}. \] Applying \eqref{eqn:GHDistanceLoop}, \eqref{eqn:CvgProbaHeight} and Lemma \ref{lem:ScoopAndLoop}, we deduce that in the Gromov-Hausdorff sense
\[\frac{\Lambda(k)}{(2k)^{a-1}} \cdot \Scoop(M_k) \underset{k\rightarrow \infty}{\overset{(d)}{\longrightarrow}} \Lts_{\frac{1}{a-1}}.\] This concludes the proof since $\Bm$ and $\Scoop(\m)$ always define the same metric space.\end{proof}

\begin{Rk}\label{rk:ConstantOn} In the setting of \cite{le_gall_scaling_2011,borot_recursive_2012} (in particular for applications to the $\On$ model), the definition of non-generic critical sequences imply that $\Lambda$ can be replaced by a constant.
\end{Rk}

\medskip

\noindent\textbf{Scaling limits: the subcritical regime.} In the subcritical case, we expect that there exists $K_\q>0$ such that in the Gromov-Hausdorff sense
\begin{equation*}
	\frac{K_\q}{\sqrt{2k}}\cdot \partial M_k \underset{k\rightarrow \infty}{\overset{(d)}{\longrightarrow}} \Trs_\e, 
\end{equation*} where $\Trs_\e$ is the Continuum Random Tree \cite{aldous_continuum_1991,aldous_continuum_1993}. When $\nu$ has exponential moments (i.e., if $\rsq>\rq F^2(\rq)$) this follows from \cite[Theorem 14]{curien_crt_2015}. As mentioned in Remark \ref{rk:QuadrangularCase}, we do not know if this is satisfied for all subcritical sequences. However, we believe that \cite[Theorem 14]{curien_crt_2015} holds under a finite variance assumption, by proving tightness of the sequence of laws and identifying the finite-dimensional marginals.

\medskip

\noindent\textbf{Scaling limits: the generic and dilute regimes.} In the generic and dilute regimes, we believe that there exists $K_\q>0$ such that in the Gromov-Hausdorff sense
\begin{equation*}
	\frac{K_\q}{2k}\cdot \partial M_k \underset{k\rightarrow \infty}{\overset{(d)}{\longrightarrow}} \mathbb{S}_1,
\end{equation*} where $\mathbb{S}_1$ stands for the unit circle. A proof could be adapted from \cite[Theorem 1.2]{curien_percolation_2014}, which is itself based on the results of \cite{jonsson_condensation_2010,kortchemski_limit_2015} about condensation in non-generic trees. However, these results apply only if we have an equivalent of the partition function $\Fs_k$, which our techniques do not provide (see Remark \ref{rk:QuadrangularCase}).

\section{Local limits of the boundary of Boltzmann maps}\label{sec:LocalLimitsSection}

\subsection{Local limits of Galton-Watson trees}\label{sec:LocalLimitGW}

\medskip

\noindent\textbf{The local topology.} The \textit{local topology} on the set $\M$ is induced by the local distance \begin{equation}\label{eqn:DefLocalCvg}
	\dloc(\m,\m'):=\left. 1 \middle/ \left(1+\sup\left\lbrace R\geq 0 : \B_R(\m) = \B_R(\m')\right\rbrace \right)\right., \quad \m,\m' \in \M.
\end{equation} Here, $\B_R(\m)$ is the ball of radius $R$ in $\m$ for the graph distance, made of all the vertices of $\m$ at distance less than $R$ from the origin, and all the edges whose endpoints are in this set. We let $\M’$ be the completed space of $\M$, so that elements of $\M_\infty := \M’ \backslash \M$ are infinite (bipartite) maps. All the elements of $\M_\infty$ we consider can be seen as proper embeddings of a graph in the plane (up to orientation preserving homeomorphisms, see \cite[Proposition 2]{curien_peeling_2016}). Then, the boundary $\partial\m$ of $\m\in\M_\infty$ is the embedding of edges and vertices of its root face. When the boundary is infinite, it is called simple if isomorphic to $\Z$. 

In order to take account of convergence towards plane trees with vertices of infinite degree, a weaker form of local convergence has been introduced in \cite{jonsson_condensation_2010} (see also \cite[Section 6]{janson_simply_2012}). The idea is to replace the ball $\B_R(\tr)$ in \eqref{eqn:DefLocalCvg} by the sub-tree $\Bl_R(\tr)$, called the \textit{left ball} of radius $R$ of $\tr$. Formally, the root vertex belongs to $\Bl_R(\tr)$, and a vertex $u=\uh k\in\tr$ belongs to $\Bl_R(\tr)$ iff $\uh\in\tr$, $k\leq R$ and $\vert u \vert \leq R$. 

For our purposes, a slightly stronger form of convergence is needed. For every $\tr \in \T_f$ and every $u\in\tr$, we denote by $(-u1,-u2,\ldots,-uk_u)=(uk_u,u(k_u-1),\ldots,u1)$ the children of $u$ in counterclockwise order. For every $\tr\in \T_f$ and every $R\geq 0$, the \textit{left-right ball} of radius $R$ in $\tr$ is the sub-tree $\Blr_R(\tr)$ defined as follows. First, $\emptyset\in\Blr_R(\tr)$. Then, a vertex $u\in\tr$ belongs to  $\Blr_R(\tr)$ iff $\uh\in \Blr_R(\tr)$, $\vert u \vert \leq 2R$ and $u\in\{\uh 1, \ldots, \uh R\}\cup \{-\uh 1, \ldots, -\uh R\}$ ($u$ is among the $R$ first or last children of its parent). We call \textit{local-$*$ topology} the topology on $\T_f$ induced by
\[\dloc^*(\tr,\tr'):=\left. 1 \middle/\left(1+\sup\left\lbrace R\geq 0 : \Blr_R(\tr) = \Blr_R(\tr') \right\rbrace \right)\right., \quad \tr,\tr' \in \T_f.\] The set $\T$ of general trees is the completion of $\T_f$ for $\dloc^*$, while the set $\Tloc$ of locally finite trees is the completion of $\T_f$ for $\dloc$. 

\medskip

\noindent\textbf{Local limits of conditioned Galton-Watson trees.} We next recall results concerning local limits of Galton-Watson trees conditioned to survive. 

\smallskip

\noindent\textit{The critical case.} The critical setting was first investigated by Kesten \cite{kesten_subdiffusive_1986} (see also \cite{abraham_local_2014}) and extended by Stephenson in \cite{stephenson_local_2016}. Let $(\rhow,\rhob)$ be a critical pair of offspring distributions, and recall that for every probability measure $\rho$ on $\Z_+$ with mean $\mrho\in(0,\infty)$, the size-biased distribution $\bar{\rho}$ is defined by 
\[\bar{\rho}(k):=\frac{k\rho(k)}{\mrho}, \quad k\in \Z_+.\] The infinite random tree $\Tinfwb=\Tinfwb(\rhow,\rhob)$ is defined as follows. It has a.s.\ a unique spine, in which white (resp.\ black) vertices have offspring distribution $\bar{\rho}_\circ$ (resp.\ $\bar{\rho}_{\bullet}$), and a unique child in the spine chosen uniformly among their offspring. Outside of the spine, white (resp.\ black) vertices have offspring distribution $\rhow$ (resp.\ $\rhob$), and all the numbers of offspring are independent. The tree $\Tinfwb$ is illustrated in Figure \ref{fig:InfiniteLooptrees}.

\begin{Prop}{\textup{\cite[Theorem 3.1]{stephenson_local_2016}}}\label{prop:ConvergenceKestenTree} Let $(\rhow,\rhob)$ be a critical pair of offspring distributions. For every $k\geq 1$, let $\Twb_k$ be a tree with law $\GWrr^{(k)}$. Then, we have in the local sense
	\[\Twb_k \underset{k\rightarrow \infty}{\overset{(d)}{\longrightarrow}} \Tinfwb(\rhow,\rhob).\]
\end{Prop}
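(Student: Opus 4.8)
The plan is to prove the convergence by checking it on the generating cylinder events of the local-$*$ topology. Since both $\GWrr^{(k)}$ and the law of $\Tinfwb$ are Borel probability measures on the Polish space $(\T,\dloc^*)$, whose topology is generated by the countably many clopen sets $\{\tr : \Blr_R(\tr) = B\}$ (with $R\geq 0$ and $B$ a finite admissible left-right ball), it suffices to show that for every such $R$ and $B$,
\[\GWrr^{(k)}\big(\Blr_R(\Twb_k) = B\big) \xrightarrow[k\to\infty]{} \mathbb{P}\big(\Blr_R(\Tinfwb) = B\big).\]
Write $Z^\circ_n$ (resp.\ $Z^\bullet_n$) for the total $\GWrr$-mass of white-rooted (resp.\ black-rooted) alternating two-type trees with exactly $n$ vertices. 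Since $\GWrr$ is white-rooted, the left-hand side above equals $N(B,k)/Z^\circ_k$, where $N(B,k)$ is the total weight of trees of size $k$ whose left-right ball is $B$. Everything thus reduces to sharp asymptotics for $Z^\circ_n,Z^\bullet_n$ together with a combinatorial decomposition of $N(B,k)$.

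For the partition functions, I would encode an alternating two-type tree by its depth-first (\L ukasiewicz) walk, which is a Markov-modulated random walk whose increment law at a given step depends only on the colour of the vertex being explored; the colour sequence is $2$-periodic and the walk is centred because $\mrhow\mrhob = 1$. The cycle lemma (Otter--Dwass) gives $Z^\circ_n = \tfrac1n\,\mathbb{P}_\circ(\text{walk first hits }-1\text{ at time }n)$ and similarly for $Z^\bullet_n$, and a local limit theorem for such Markov-modulated walks yields, on the relevant arithmetic sublattices,
\[Z^\circ_n \sim C_\circ\, n^{-\gamma}, \qquad Z^\bullet_n \sim C_\bullet\, n^{-\gamma},\]
with $\gamma = 3/2$ when the offspring laws have finite variance and $\gamma = 1+1/\alpha\in(1,2)$ in the $\alpha$-stable regime; in particular $Z^\circ_{n-j}/Z^\circ_n \to 1$ and $Z^\bullet_{n-j}/Z^\circ_n \to C_\bullet/C_\circ$ along the sublattices, for each fixed admissible $j$. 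Alternatively, one can contract the black vertices to reduce to a monotype critical Galton-Watson tree and invoke Kesten's classical result \cite{kesten_subdiffusive_1986,abraham_local_2014}, at the cost of relating the two conditionings.

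Granting this, a tree $T$ with $\Blr_R(T) = B$ is obtained from $B$ by attaching, at a prescribed finite set of ``active'' vertices of $B$ (those at height $2R$, and those having recorded children but unrecorded middle offspring), some extra non-recorded offspring together with independent $\GWrr$-subtrees of the appropriate colours; its weight factorises as $w(B)$ times the weights of these attachments. Splitting $N(B,k)$ according to which single attachment carries the bulk of the $k$ vertices --- the contribution of configurations with two or more large subtrees being $o(Z^\circ_k)$ because $\sum_{i+j=n,\ i,j\geq\varepsilon n} i^{-\gamma}j^{-\gamma} = O(n^{1-2\gamma}) = o(n^{-\gamma})$ for $\gamma>1$ --- and using the ratio estimates above, one obtains that $N(B,k)/Z^\circ_k$ converges to an explicit expression in which the active vertex carrying the large subtree contributes a size-biasing factor. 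Matching the constants $C_\circ,C_\bullet$ (pinned down, up to the common prefactor, by the renewal/recursive structure of the $Z_n$) against $\mrhow,\mrhob$ then identifies this limit with $\mathbb{P}(\Blr_R(\Tinfwb)=B)$ for the spine decomposition defining $\Tinfwb$, with the child in the spine chosen uniformly among the offspring.

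The main obstacle is the local limit theorem for the Markov-modulated \L ukasiewicz walk, together with the identification of the sublattice structure (the span of the possible tree sizes) and of the ratio $C_\bullet/C_\circ$; once these are in hand, the ``one big jump'' localisation and the bookkeeping that produces the size-biased spine are routine. A secondary point requiring care is that, in the stable regime, the spine vertices may have unbounded (though a.s.\ finite) degree, so one must check tightness of the number of recorded middle children to secure convergence in the finer local-$*$ topology, rather than merely in the left-ball topology of \cite{jonsson_condensation_2010}.
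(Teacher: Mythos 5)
The paper gives no proof of this statement; it is quoted directly as Stephenson~\cite[Theorem 3.1]{stephenson_local_2016}, the multi-type generalisation of Kesten's theorem. So your attempt is being measured against an external reference, not against an internal argument.

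Your primary route, encoding the two-type tree by its depth-first \L{}ukasiewicz walk, contains a genuine technical error. In depth-first order the vertex colours do \emph{not} form a $2$-periodic sequence: colours alternate by \emph{height}, not by DFS time (take a white root with one black child having two white children; the DFS colour sequence is $\circ,\bullet,\circ,\circ$). Consequently the walk you describe is not a two-periodic Markov-modulated walk, and criticality $\mrhow\mrhob=1$ does \emph{not} make the step-by-step increments centred (that would require $\mrhow=\mrhob=1$). The Otter--Dwass cycle lemma also needs exchangeable increments, so it does not apply directly to a colour-dependent walk. Your ``alternative'' --- contracting the black vertices --- is the right fix: the resulting monotype tree of white vertices has i.i.d.\ offspring with mean $\mrhow\mrhob=1$, hence a genuinely centred \L{}ukasiewicz walk. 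Equivalently one can use the Janson--Stef\'ansson bijection $\JS$ of Proposition~\ref{prop:LawJSTransform}, which preserves the vertex set and therefore transports the conditioning $|\Twb_k|=k$ directly; this is precisely the strategy the paper adopts for the \emph{subcritical} analogue in Proposition~\ref{prop:ConvergenceJSTreeTwoType}, carrying the monotype local limit through $\JS^{-1}$. Adapting that argument to the critical case would give a self-contained proof of the present statement.

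Two further remarks. First, your one-big-jump bound $\sum_{i+j=n,\,i,j\ge\varepsilon n}i^{-\gamma}j^{-\gamma}=o(n^{-\gamma})$ presumes polynomial (regularly varying) asymptotics for the partition functions, hence either finite variance or a stable domain of attraction. The proposition assumes only that $(\rhow,\rhob)$ is critical; Stephenson's theorem --- like Kesten's, e.g.\ in the form of~\cite{abraham_local_2014} --- requires no tail regularity, only ratio estimates of the form $\mathbb{P}(|T|=n-j)/\mathbb{P}(|T|=n)\to 1$ along the support lattice, available for general centred aperiodic walks. This gap in generality is harmless for the paper's applications (where $(\nuw,\nub)$ are always regularly varying by Proposition~\ref{prop:PropertyNu}), but it means your sketch does not prove the proposition as written. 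Second, the closing remark about securing convergence in the ``finer local-$*$ topology'' is misplaced: in the critical case $\Tinfwb$ is a.s.\ locally finite, so local-$*$ convergence to it is equivalent to ordinary local convergence, and the proposition is stated in the latter sense; the local-$*$ machinery is only needed in the subcritical case, where a vertex of infinite degree appears.
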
 Here and after, we implicitly work along a subsequence on which $\GWrr(\{\vert \tr \vert =k\})>0$.

\smallskip

\noindent\textit{The subcritical case.} We start with subcritical monotype trees, first considered in \cite{jonsson_condensation_2010} and studied in full generality in \cite{janson_simply_2012,abraham_local_2014-1}. Let $\rho$ be a subcritical offspring distribution (such that $\rho(0)\in (0,1)$). The infinite random tree $\Tinf=\Tinf(\rho)$ is defined as follows. It has a.s.\ a unique finite spine of random size $L$, such that $P(L=k)=(1-\mrho)\mrho^{k-1}$ for $k\in\N$. The last vertex of the spine has infinite degree. The $L-1$ first vertices of the spine have offspring distribution $\bar{\rho}$, and a unique child in the spine chosen uniformly among the offspring. Outside of the spine, vertices have offspring distribution $\rho$, and all the numbers of offspring are independent. This defines a random element of $\T$.

\begin{Prop}\label{prop:ConvergenceJSTree} Let $\rho$ be a subcritical offspring distribution with no exponential moment $($and $\rho(0)\in (0,1)$$)$. For every $k\geq 1$, let $T_k$ be a tree with law $\GWr^{(k)}$. Then, in the local-$*$ sense,
	\[T_k \underset{k\rightarrow \infty}{\overset{(d)}{\longrightarrow}} \Tinf(\rho).\]
\end{Prop}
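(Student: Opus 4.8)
The plan is to follow the now-standard strategy for establishing condensation in subcritical conditioned Galton-Watson trees, as pioneered by Jonsson \& Stef\'ansson \cite{jonsson_condensation_2010} and made general by Janson \cite{janson_simply_2012} and Abraham \& Delmas \cite{abraham_local_2014-1}. In fact, under the hypotheses of the proposition (subcriticality with $\rho(0)\in(0,1)$, and no further moment assumptions beyond the absence of exponential moments — which is needed only to place us in the \emph{condensation} rather than the generic subcritical regime), the result is essentially \cite[Theorem 1]{janson_simply_2012} specialized to the conditioning $\{|\tr|=k\}$, so the cleanest route is to invoke it directly and only carry out the identification of the limiting object $\Tinf(\rho)$ and the translation between the local and local-$*$ topologies. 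I would first recall that for subcritical $\rho$ the conditioned law $\GWr^{(k)}$ is well-defined along the subsequence of $k$ with $\GWr(|\tr|=k)>0$ (the usual periodicity caveat already flagged in the excerpt), and that Janson's theorem gives convergence in the \emph{local} topology $\dloc$ to the tree obtained by size-biasing along a spine of geometric length, with the terminal spine vertex carrying infinitely many offspring: this is precisely the description of $\Tinf(\rho)$ given just before the statement.

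The key steps, in order, are as follows. First, verify the hypotheses of \cite[Theorem 1]{janson_simply_2012} (equivalently \cite[Theorem 5.1]{abraham_local_2014-1}): $\rho$ has mean $\mrho<1$, $\rho(0)>0$, and — to rule out the generic subcritical case where a conditioned GW tree converges instead to Kesten's tree with a \emph{finite}-degree spine — we use that $\rho$ has no exponential moment, which forces the ``$\rho$-subcritical with a fat tail'' scenario. (More precisely, one checks the relevant $L^1$/tail condition on $\rho$ that guarantees the single big jump occurs at a single vertex; the no-exponential-moment assumption is exactly what is quoted in the analogous Theorem~\ref{th:LocalLimits}, via the stable domain of attraction of $\nub$.) Second, apply the theorem to obtain $T_k \to \Tinf(\rho)$ for $\dloc$, i.e.\ in the sense of the left-ball topology with $\Bl_R$. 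Third, upgrade this to the \emph{local-$*$} topology of the excerpt, which uses the symmetric left-right balls $\Blr_R$: this is a matter of checking that, since all but one vertex of $\Tinf(\rho)$ has finite degree and the unique infinite-degree vertex is the condensation point, the law of $\Blr_R(T_k)$ is determined asymptotically by finitely many independent subtrees together with the first and last $R$ children of the spine vertices — and by exchangeability/symmetry of the offspring labelling in a GW tree the ``last $R$ children'' behave like an independent copy of the ``first $R$ children'' in the limit. Finally, identify the spine law: condition on the event that the big-degree vertex sits at height $L$; a Bayesian computation (the same one underlying the size-biased spine in Kesten's construction) shows $P(L=k)=(1-\mrho)\mrho^{k-1}$ and that the $L-1$ proper spine ancestors are size-biased with a uniformly chosen marked child, matching the stated $\Tinf(\rho)$.

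The step I expect to be the main obstacle is the passage from $\dloc$-convergence to $\dloc^*$-convergence, because the local-$*$ topology is genuinely finer than the left-ball topology used in Janson's and Abraham--Delmas's statements — it ``sees'' the last children of each vertex, not just the first $R$ children — and it is this refinement that is actually needed downstream to control looptrees (recall $\Loopb$ and $\Loopbb$ both depend on first \emph{and} last children). The resolution is that convergence of $\Blr_R$ reduces, after conditioning on the height and degree of the condensation vertex, to convergence of a fixed finite collection of independent fringe subtrees attached to the first/last $R$ children along the spine, and these are handled by an elementary exchangeability argument plus the fact that a big-jump decomposition of $\GWr^{(k)}$ (the vertex achieving the maximal degree) concentrates all the mass at one vertex; a secondary technical point is to make the tightness for $\dloc^*$ explicit, i.e.\ to show no mass escapes when we track last children, which again follows from exchangeability of offspring in GW trees together with the $\dloc$ tightness already granted. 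A complete proof would either cite a version of the condensation theorem already phrased for the symmetric balls (if available) or spell out this exchangeability/big-jump reduction; given the length constraints here, the natural write-up is to state that the argument is a verbatim adaptation of \cite{janson_simply_2012,abraham_local_2014-1} to the local-$*$ topology, pointing to the exchangeability of the offspring ordering and the single-big-jump principle as the only new ingredients.
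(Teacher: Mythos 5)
Your proposal follows essentially the same two-step plan as the paper: invoke Janson's condensation theorem (\cite[Theorem 7.1]{janson_simply_2012}) to get convergence in the left-ball topology $\dloc$, then upgrade to the left-right-ball topology $\dloc^*$ via exchangeability of offspring ordering. The difference is in how the upgrade is carried out, and the paper's version is much lighter than what you sketch. You propose conditioning on the location and degree of the condensation vertex, invoking a big-jump decomposition, arguing asymptotic independence of the fringe subtrees hanging from last children, and re-checking tightness for $\dloc^*$. None of this is needed. The paper uses a single \emph{non-asymptotic} identity: for every fixed $k$ and $R$, the random trees $\Blr_R(T)$ and $\Bl_{2R}(T)$ have the same law under $\GWr^{(k)}$. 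Indeed, the map that, for each vertex $u$ with $k_u>2R$ children, swaps the subtrees rooted at positions $R+1,\ldots,2R$ with those rooted at the last $R$ positions is a bijection on trees with $k$ vertices that preserves every offspring count and hence the measure $\GWr^{(k)}$, and it carries $\Blr_R$ onto $\Bl_{2R}$. Given this equidistribution, $\dloc^*$-convergence of $T_k$ to $\Tinf(\rho)$ is just the $\dloc$-convergence at radius $2R$, so the spine structure, the single-big-jump analysis and the tightness for $\dloc^*$ are all inherited for free from Janson's theorem. In short, same strategy; your ``main obstacle'' (the passage from $\dloc$ to $\dloc^*$) is resolved by this one-line swap rather than by a conditioning argument.
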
 

\begin{proof}The proof follows from \cite[Theorem 7.1]{janson_simply_2012}. However, this result is equivalent to the convergence of left-balls of any radii (see \cite[Lemma 6.3]{janson_simply_2012}), which is weaker than our statement. Then, observe that for every $\tr\in\T_f$, $k\geq 0$ and $R\geq 0$ we have 
	\[\GWr^{(k)}\left( \Blr_R(T)=\tr \right)=\GWr^{(k)}\left( \Bl_{2R}(T)=\tr \right).\] Indeed, $\GWr^{(k)}$ is invariant under the operation consisting in exchanging the descendants of $(u(R+1),\ldots u(2R))$ and $(-u1,\ldots -uR)$ for every $u\in \tr$ such that $k_u(\tr)>2R$ (which exchanges $\Blr_R(\tr)$ and $\Bl_{2R}(\tr)$). This concludes the argument.\end{proof}
 
We now extend Proposition \ref{prop:ConvergenceJSTree} to two-type Galton-Watson trees. Let $(\rhow,\rhob)$ be a subcritical pair of offspring distributions. We build a two-type version $\Tinfwb=\Tinfwb(\rhow,\rhob)$ of $\Tinf$ as follows. It has a.s.\ a unique spine, with random number of vertices $2L'$ satisfying \[P(L'=k)=(1-\mrhow\mrhob)(\mrhow\mrhob)^{k-1}, \quad k\in\N.\] In the spine, the topmost (black) vertex has infinite degree, while other vertices have offspring distribution $\bar{\rho}_\circ$ (if white) and $\bar{\rho}_\bullet$ (if black), with a unique child in the spine chosen uniformly among the offspring. Outside of the spine, white (resp.\ black) vertices have offspring distribution $\rhow$ (resp.\ $\rhob$), and all the numbers of offspring are independent (see Figure~\ref{fig:InfiniteLooptrees}).

\begin{Prop}\label{prop:ConvergenceJSTreeTwoType} Let $(\rhow,\rhob)$ be a subcritical pair of offspring distributions such that $\rhow$ is geometric with parameter $1-p\in(0,1)$ $(\rhow(k)=(1-p)p^k$ for $k\geq 0)$, and $\rhob$ has no exponential moment. For every $k\geq 1$, let $\Twb_k$ be a tree with law $\GWrr^{(k)}$. Then, in the local-$*$ sense,
	\[\Twb_k \underset{k\rightarrow \infty}{\overset{(d)}{\longrightarrow}} \Tinfwb(\rhow,\rhob).\]
\end{Prop}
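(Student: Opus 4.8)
The plan is to reduce the two-type statement to the monotype Proposition \ref{prop:ConvergenceJSTree} via the Janson–Stef\'ansson bijection $\JS$, exactly as Proposition \ref{prop:LawTreeComponents} was reduced to Corollary \ref{cor:LawTreeComponentsJS}. First I would set $\rho$ to be the monotype offspring distribution associated to $(\rhow,\rhob)$ by Proposition \ref{prop:LawJSTransform}, i.e.\ $\rho(0)=1-p$ and $\rho(k)=p\cdot\rhob(k-1)$ for $k\ge 1$. Since $(\rhow,\rhob)$ is subcritical and $\rhow$ is geometric with parameter $1-p$, Proposition \ref{prop:LawJSTransform} gives that $\rho$ is subcritical as well ($m_\rho-p=(1-p)m_{\rhow}m_{\rhob}<1-p$), and $\rho(0)=1-p\in(0,1)$. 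Moreover $\rho$ has no exponential moment precisely because $\rhob$ has none (the tail of $\rho$ is a constant multiple of that of $\rhob$), so Proposition \ref{prop:ConvergenceJSTree} applies to $\rho$: if $T_k'$ has law $\GWr^{(k)}$ then $T_k' \to \Tinf(\rho)$ in the local-$*$ sense.

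Second, I would transfer the conditioning through $\JS$. The bijection $\JS$ maps a two-type tree with $j$ vertices at even height and $i$ vertices at odd height to a monotype tree on the same $i+j$ vertices; crucially, white vertices of $\tr$ become leaves of $\JS(\tr)$ and black vertices with $m$ children become vertices of $\JS(\tr)$ with $m+1$ children. In particular $\vert \JS(\tr)\vert = \vert\tr\vert$, so conditioning on the total number of vertices commutes with $\JS$, and by Proposition \ref{prop:LawJSTransform} the image of $\GWrr^{(k)}$ under $\JS$ is exactly $\GWr^{(k)}$. Hence $\JS(\Twb_k)$ has the law of $T_k'$ and converges in the local-$*$ sense to $\Tinf(\rho)$. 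Applying the inverse bijection $\JS^{-1}$, which should be checked to be continuous for the local-$*$ topology (since $\JS^{-1}$ reads off edges of $\tr$ from contour-order data that is determined by left-right balls — a vertex's position among first/last children and the first/last-child structure along ancestral lines are all local-$*$ data), we get $\Twb_k = \JS^{-1}(\JS(\Twb_k)) \to \JS^{-1}(\Tinf(\rho))$ in the local-$*$ sense.

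Third, I would identify $\JS^{-1}(\Tinf(\rho))$ with $\Tinfwb(\rhow,\rhob)$. This is the combinatorial heart of the argument: one must check that applying the (extended, infinite-tree) bijection $\JS^{-1}$ to Kesten's condensation tree $\Tinf(\rho)$ produces precisely the two-type spine description given before the statement. The infinite vertex of $\Tinf(\rho)$, which sits at the bottom of its spine of random length $L$, becomes under $\JS^{-1}$ the topmost black vertex of infinite degree; the geometric law $P(L=k)=(1-m_\rho)m_\rho^{k-1}$ should translate into $P(L'=k)=(1-m_{\rhow}m_{\rhob})(m_{\rhow}m_{\rhob})^{k-1}$ for the number $2L'$ of spine vertices in the two-type picture, using $m_\rho = p + (1-p)m_{\rhow}m_{\rhob}$; the size-biased laws $\bar\rho$ along the spine of $\Tinf(\rho)$ should split into the alternating $\bar\rho_\circ$ (geometric size-biased) and $\bar\rho_\bullet$ data, and the off-spine subtrees with law $\GWr$ should become the off-spine alternating two-type subtrees with law $\GWrr$. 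This requires a careful but essentially bookkeeping analysis of how $\JS^{-1}$ acts locally near a vertex of infinite degree and along a spine; one subtlety is making sense of $\JS$ and $\JS^{-1}$ on infinite trees (in $\T$, with a vertex of infinite degree) rather than only on $\T_f$, which I would handle by passing to the limit in the local-$*$ topology and using continuity.

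The main obstacle I expect is this last identification step — establishing that $\JS^{-1}$ is local-$*$ continuous and that it carries the explicit law of $\Tinf(\rho)$ onto the explicit law of $\Tinfwb(\rhow,\rhob)$, including getting the spine-length geometric parameter and the size-biased offspring laws to match exactly. The subcriticality and moment hypotheses enter only to legitimize the use of Proposition \ref{prop:ConvergenceJSTree}, and the geometric hypothesis on $\rhow$ is exactly what makes Proposition \ref{prop:LawJSTransform} available, so once the bijective dictionary is in place the convergence is immediate; the work is entirely in the dictionary.
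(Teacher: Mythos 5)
Your high-level strategy — push the two-type conditioned tree through $\JS$ to a monotype conditioned tree, invoke Proposition \ref{prop:ConvergenceJSTree}, and pull back — is exactly what the paper does. The issue is in your second step, and it is not a small one.

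You write that $\JS^{-1}$ "should be checked to be continuous for the local-$*$ topology" and sketch a justification based on contour-order data being local. This is not correct: the root of $\JS^{-1}(\tr)$ is defined to be the \emph{last leaf of $\tr$ in contour order}, which is reached by following the rightmost descendant path all the way down, and this is genuinely global data about $\tr$. In particular, knowing $\Blr_R(\tr)$ for any fixed $R$ does \emph{not} determine the root of $\JS^{-1}(\tr)$, let alone its $\Blr_R$-neighbourhood. So $\JS^{-1}$ does not extend continuously from $\T_f$ to $\T$ in the way your plan requires, and you cannot simply "apply $\JS^{-1}$ to both sides of $T_k'\to\Tinf(\rho)$". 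The paper gets around this with a two-scale truncation argument that you do not reproduce: it introduces the pruned trees $T_k[N,R+1]$ (prune to the left-right ball of radius $N$ \emph{and} trim the offspring of the distinguished high-degree vertex $u_N$ to its $2(R+1)$ extreme children), shows that on an event $\mathcal{E}(R,N,k)$ of probability tending to $1$ (first $k\to\infty$, then $N\to\infty$) the ball $\Blr_R\bigl(\JS^{-1}(T_k)\bigr)$ is a deterministic function of $T_k[N,R+1]$, and only then transfers the convergence. Tracking precisely where the discarded vertices land under $\JS^{-1}$ — namely among the outer children of $u'_N:=\JS^{-1}(u_N)$ — is the substance of this step.

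Your third step also underestimates the work. The identification of $\JS^{-1}(\Tinf(\rho))$ with $\Tinfwb(\rhow,\rhob)$ is not carried out at the level of the infinite trees directly; the paper establishes the pushforward identity $\JS\bigl(\GWrr^{[d]}\bigr)=\GWr^{[d+1]}$ for the laws of the \emph{pointed, truncated} trees $(\Tinfwb[d],u_\infty)$ and $(\Tinf[d+1],u_\infty)$, by an explicit finite-tree computation, and then feeds this into the truncation scheme above. So the "dictionary" you describe is correct in spirit, but it must be proved at the truncated level and combined with the stabilization estimate, not invoked as a separate continuity-plus-identification argument. As written, your proposal has a genuine gap where the paper's $\mathcal{E}(R,N,k)$ argument and the $\GWrr^{[d]}\to\GWr^{[d+1]}$ identity should be.
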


\begin{proof}For every $k\geq 1$, let $T_k:=\JS(\Twb_k)$. By Proposition \ref{prop:LawJSTransform}, $T_k$ has law $\GWr^{(k)}$, where 
\[\rho(0)=1-p \quad \text{and} \quad \rho(k)=p\cdot\rho_{\bullet}(k-1), \quad k\in \N.\] In particular, $\rho$ satisfies the hypothesis of Proposition \ref{prop:ConvergenceJSTree}. For every $N\geq 1$, let $u_N=u_N(T_k)$ be the first vertex of $\Blr_N(T_k)$ in contour order having $2N$ offspring (or the root vertex otherwise). For every $R\geq 0$, we also let $T_k \langle u_N,R \rangle$ be the collection of subtrees of $T_k$ containing all the children of $u_N$ different from $\{\pm u_N1,\ldots \pm u_NR\}$, as well as their descendants. Finally, set $T_k[N,R]:=\Blr_N(T_k)\backslash T_k \langle u_N,R \rangle$, and extend these definitions to $\Tinf=\Tinf(\rho)$. We denote by $u_\infty$ the vertex of infinite degree of $\Tinf$, and let $\Tinf[R]$ be the subtree of $\Tinf$ in which children of $u_\infty$ other than $\{u_\infty 1,\ldots u_\infty R\}$ and their descendants are discarded. This definition extends to $\Tinfwb$.

Fix $R\geq 0$. By Proposition \ref{prop:ConvergenceJSTree} and the definition of $\Tinf$, we have in the local sense
\begin{equation}\label{eqn:PrunedJSTree}
	T_k[N,R+1]\underset{k\rightarrow \infty}{\overset{(d)}{\longrightarrow}} \Tinf[N,R+1], \quad \text{and} \quad \Tinf[N,R+1]\underset{N\rightarrow \infty}{\overset{(d)}{\longrightarrow}} \Tinf[2(R+1)].
\end{equation} In particular, the event (measurable with respect to $\Blr_N(T_k)$)
\[\mathcal{E}(R,N,k):=\left\lbrace \sup\{\vert u \vert \vee k_u : u\in T_k[N,R+1]\} <N\right \rbrace\] has probability tending to one when $k$ and then $N$ go to infinity. On the event $\mathcal{E}(R,N,k)$, one has $T_k\backslash T_k[N,R+1]\subseteq T_k \langle u_N,R+1 \rangle $, which in turn enforces \begin{equation}\label{eqn:StabilityJSEventE}
	\Blr_R(\Twb_k)=\Blr_R(\JS^{-1}(T_k))\subseteq \JS^{-1}(T_k[N,R+1]).
\end{equation} Indeed, on this event, the images of vertices of $T_k\backslash T_k[N,R+1]$ in $\JS^{-1}(T_k)$ are descendants of the children of $u'_N:=\JS^{-1}(u_N)$ that are not in $\{\pm u'_N1,\ldots \pm u'_NR\}$. (See Figure \ref{fig:ProofLocal}.)

Let $d\geq 0$, and keep the notation $u_\infty$ for the pointed vertex with $d$ children in $\Tinf[d]$ and $\Tinfwb[d]$. We let $\GWr^{[d]}$ be the law of $(\Tinf[d],u_\infty)$, and $\GWrr^{[d]}$ be that of $(\Tinfwb[d],u_\infty)$. Then,
\begin{equation}\label{eqn:PushforwardJSGW}
	\JS\left(\GWrr^{[d]}\right)=\GWr^{[d+1]}.
\end{equation} We temporarily admit \eqref{eqn:PushforwardJSGW} and conclude the proof. Let $A$ be a Borel set for the local-$*$ topology. We have by \eqref{eqn:StabilityJSEventE} that for every $k\geq 1$ and $N\geq 1$
\[\left| P\left(\Blr_R\left( \Twb_k \right)\in A \right)-P\left(\Blr_R\left( \JS^{-1}(T_k[N,R+1]) \right)\in A \right) \right|\leq 2P(\mathcal{E}(R,N,k)^c).\] Next, for every $N\geq 1$, \eqref{eqn:PrunedJSTree} entails \[\left| P\left(\Blr_R\left( \JS^{-1}(T_k[N,R+1]) \right)\in A \right)-P\left(\Blr_R\left( \JS^{-1}(\Tinf[N,R+1]) \right)\in A \right) \right|\underset{k\rightarrow \infty}{\longrightarrow} 0.\] Then, by \eqref{eqn:PrunedJSTree} again and the fact that $\Tinf[2(R+1)]$ is a.s.\ finite,
\[\left| P\left(\Blr_R\left( \JS^{-1}(\Tinf[N,R+1]) \right)\in A \right)-P\left(\Blr_R\left( \JS^{-1}(\Tinf[2(R+1)]) \right)\in A \right) \right| \underset{N\rightarrow \infty}{\longrightarrow} 0.\]  Finally, for every $R\geq 0$, $\Blr_R\left( \Tinfwb \right)=\Blr_R\left( \Tinfwb[2R+1]\right)$ by definition so that by \eqref{eqn:PushforwardJSGW}, \[P\left(\Blr_R\left( \JS^{-1}(\Tinf[2(R+1)]) \right)\in A \right)=P\left(\Blr_R\left( \Tinfwb[2R+1] \right)\in A \right)=P\left(\Blr_R\left( \Tinfwb \right)\in A \right).\] As a conclusion, by letting $k$ and then $N$ go to infinity, we have 
\begin{align*}\lim_{k\rightarrow \infty}\left| P\left(\Blr_R\left( \Twb_k \right)\in A \right)-P\left(\Blr_R\left( \Tinfwb \right)\in A \right) \right|=0.
\end{align*} Let us now prove assertion \eqref{eqn:PushforwardJSGW}. Let $(\tr,u^*)$ be a pointed plane tree such that $k_{u^*}(\tr)=d+1$. By definition, $(\tr',v^*):=\JS^{-1}(\tr,u^*)$ is a pointed plane tree satisfying $k_{v^*}(\tr')=d$, and $v^*\in \tb'$. Then, we have by definition of $\rhow$ and the identity $\sum_{u\in\tw'}{k_u(\tr')}=\vert \tb'\vert$,
\begin{align*}\GWrr^{[d]}\left(\JS^{-1}((\tr,u^*))\right)=\GWrr^{[d]}\left((\tr',v^*)\right)&=\frac{1-\mrhow\mrhob}{\mrhow}\prod_{u\in \tw'}{(1-p)p^{k_u(\tr')}} \prod_{u\in \tb' \atop u \neq v^* }{\rhob(k_u(\tr'))}\\
&=\frac{p(1-\mrhow\mrhob)}{\mrhow}\prod_{u\in \tw'}{(1-p)} \prod_{u\in \tb' \atop u \neq v^* }{p\cdot \rhob(k_u(\tr'))}.
\end{align*} Vertices of $\tw'$ are mapped to leaves of $\tr$ by $\JS$, while vertices of $\tb'$ with $k$ children are mapped to vertices of $\tr$ with $k+1$ children. By Proposition \ref{prop:LawJSTransform}, we get
\begin{align*}\GWrr^{[d]}\left(\JS^{-1}((\tr,u^*))\right)
	&=(1-\mrho)\prod_{u\in \tr \atop k_u(\tr)=0 }{(1-p)} \prod_{u\in \tr\backslash\{u^*\} \atop k_u(\tr)>0 }{p\cdot \rhob(k_u(\tr)-1)}\\
	&=(1-\mrho)\prod_{u\in \tr \backslash\{u^*\}}{\rho(k_u(\tr))},
\end{align*} which is $\GWr^{[d+1]}((\tr,u^*))$, as expected.\end{proof}

\begin{figure}[ht]
	\centering
	\includegraphics[scale=1.1]{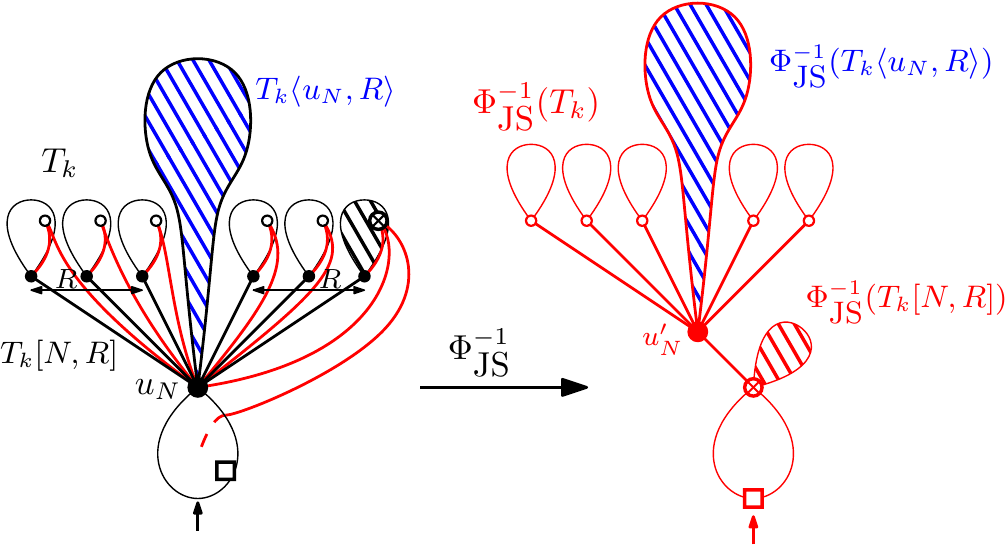}
	\caption{The image of $T_k$ by $\JS^{-1}$, on the event $\mathcal{E}(R,N,k)$. The boxed vertex is the last leaf of $T_k$ in contour order, while the crossed vertex is the last leaf among the descendants of $u_N$.}
	\label{fig:ProofLocal}
	\end{figure}

We conclude with a property of $\Tinfwb$ in the subcritical case. Let $u_\infty$ be the unique vertex with infinite degree of $\Tinfwb$, and $\uh_\infty$ its parent. There exists $j\in\{1,\ldots,k_{\uh_\infty}\}$ such that $u_\infty=\uh_\infty j$. We define the vertex $u_\infty^\leftarrow$ as $\uh_\infty (j-1)$ if $j>1$, and $\uh_\infty$ itself if $j=1$. The vertex $u_\infty$ and its incident edges disconnect $\Tinfwb$ in infinitely many connected components that we denote by $(\Tr_i:i\in \Z)$. For every $i\neq 0$, $\Tr_i$ is the connected component containing $u_\infty i$, rooted at the oriented edge going from $u_\infty i$ to its first child in $\Tinfwb$. Finally, $\Tr_0$ is the connected component containing the root vertex of $\Tinfwb$, and has the same root edge as $\Tinfwb$. 

\begin{Lem}\label{lem:LawSizeBiasedGW}
	The plane trees $(\Tr_i:i\in \Z)$ are independent. For every $i\neq 0$, $\Tr_i$ has law $\GWrr$, while $\Tr_0$ has the size-biased law $\overline{\GWrr}$ defined by
	\[\overline{\GWrr}(\tr)=\frac{\vert \tr \vert \GWrr(\tr)}{\GWrr(\vert T \vert)}, \quad \tr \in \T_f.\] Moreover, conditionally on $\Tr_0$, $u_\infty^\leftarrow$ has uniform distribution on $\Tr_0$.
\end{Lem}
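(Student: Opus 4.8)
The plan is to read off all four assertions from an explicit computation of the joint law of $\Tr_0$, the marked vertex $u_\infty^\leftarrow$, and any finite sub-family of the components $\Tr_i$, $i\neq0$, performed directly on the spinal description of $\Tinfwb$. Throughout, $\rhow(k)=(1-p)p^k$, so that $\mrhow=p/(1-p)$ and $\rhow(k+1)=p\,\rhow(k)$; this last identity is the only point at which the geometric hypothesis enters. Write $\rho_{(u)}$ (resp.\ $\bar{\rho}_{(u)}$) for $\rhow$ or $\rhob$ (resp.\ for their size-biased versions) according as $u$ is white or black.

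First I would establish a purely combinatorial bijection. Given a finite plane tree $\tr_0$, recall that $\uh_\infty$ is the topmost spine vertex lying in $\Tr_0$, hence white, hence at even height, and that $u_\infty=\uh_\infty j$ for some $j\in\{1,\dots,k_{\uh_\infty}(\tr_0)+1\}$. I claim that $(\uh_\infty,j)$ and the vertex $x:=u_\infty^\leftarrow\in\tr_0$ determine each other: if $x$ has even height then $\uh_\infty=x$ and $j=1$, while if $x$ has odd height then $\uh_\infty$ is the parent of $x$ and $j$ is one plus the index of $x$ among its siblings in $\tr_0$. These two cases are mutually exclusive by parity, and $\sum_{v}(k_v(\tr_0)+1)=|\tr_0|$ with $v$ ranging over even-height vertices shows this is a bijection between admissible pairs and vertices of $\tr_0$. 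The care needed here is in viewing $\Tr_0$ as a genuine plane tree, i.e.\ re-indexing the children of $\uh_\infty$ after the deletion of $u_\infty$.

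Next, for fixed finite $\tr_0$, $x\in\tr_0$ and $\tr_1,\dots,\tr_m$, let $(v,j)$ be the pair associated with $x$ and let $S=[\emptyset,v]$ be the spine of $\Tinfwb$ inside $\tr_0$, so that $|v|=2L'-2$. Reading off the construction of $\Tinfwb$, the probability $P(\Tr_0=\tr_0,\ u_\infty^\leftarrow=x,\ \Tr_i=\tr_i\ \text{for}\ 1\le i\le m)$ equals
\[
P(L'=|v|/2+1)\cdot\!\!\prod_{u\in S\setminus\{v\}}\!\!\frac{\bar{\rho}_{(u)}(k_u(\tr_0))}{k_u(\tr_0)}\cdot\frac{\bar{\rho}_{\circ}(k_v(\tr_0)+1)}{k_v(\tr_0)+1}\cdot\!\!\prod_{u\in\tr_0\setminus S}\!\!\rho_{(u)}(k_u(\tr_0))\cdot\prod_{i=1}^{m}\GWrr(\tr_i),
\]
the factors $1/k_u(\tr_0)$ and $1/(k_v(\tr_0)+1)$ accounting for the uniform choice of spine child (the latter also fixing the slot $j$), and the $\GWrr(\tr_i)$ coming from the subtrees detached at the children of $u_\infty$. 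Using $\bar{\rho}(k)=k\rho(k)/m_\rho$, each factor over $S\setminus\{v\}$ is $\rho_{(u)}(k_u(\tr_0))/m_{(u)}$ and the factor at $v$ is $\rhow(k_v(\tr_0)+1)/\mrhow=p\,\rhow(k_v(\tr_0))/\mrhow$. Since $S\setminus\{v\}$ has $L'-1$ white and $L'-1$ black vertices, the product of the $1/m_{(u)}$ is $(\mrhow\mrhob)^{-(L'-1)}$, which cancels the factor $(\mrhow\mrhob)^{L'-1}$ in $P(L'=|v|/2+1)=(1-\mrhow\mrhob)(\mrhow\mrhob)^{L'-1}$; collecting the remaining $\rho$-weights (the $v$-term now supplying the true $\GWrr$-weight of $v$) reduces the whole expression to $(1-\mrhow\mrhob)\,\tfrac{p}{\mrhow}\,\GWrr(\tr_0)\prod_{i=1}^{m}\GWrr(\tr_i)$, independent of $x$.

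Finally, summing over the $|\tr_0|$ admissible values of $x$ — and using subcriticality of $\GWrr$, so that summing each $\GWrr(\tr_i)$ over $\tr_i\in\T_f$ contributes $1$ — gives
\[
P\big(\Tr_0=\tr_0,\ \Tr_i=\tr_i\ \text{for}\ 1\le i\le m\big)=|\tr_0|\,(1-\mrhow\mrhob)\,\tfrac{p}{\mrhow}\,\GWrr(\tr_0)\prod_{i=1}^{m}\GWrr(\tr_i).
\]
This factorisation over arbitrary finite sub-families yields the independence of $(\Tr_i:i\in\Z)$ and $\Tr_i\overset{(d)}{=}\GWrr$ for $i\neq0$; there remains only to identify the constant. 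By Proposition \ref{prop:LawJSTransform}, $\JS$ is a vertex-count-preserving bijection from $\GWrr$ onto $\GWr$ with $\rho(0)=1-p$ and $\rho(k)=p\,\rhob(k-1)$, so $\GWrr(|T|)=1/(1-\mrho)$ with $\mrho=p(1+\mrhob)$, which is $<1$ exactly because $(\rhow,\rhob)$ is subcritical; a one-line computation then gives $(1-\mrhow\mrhob)p/\mrhow=1-\mrho=1/\GWrr(|T|)$, so $\Tr_0$ has law $\overline{\GWrr}$. Uniformity of $u_\infty^\leftarrow$ on $\Tr_0$ is immediate from the $x$-independence of the joint density. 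The main obstacle is not any individual computation but keeping the spinal bookkeeping straight — principally the parity bijection of the second step together with the correct re-indexing of $\Tr_0$, and the clean cancellation of the powers of $\mrhow\mrhob$.
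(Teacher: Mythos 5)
Your proof is correct and takes essentially the same route as the paper's: you read off the probability of the event directly from the spinal construction of $\Tinfwb$, observe that the size-biased normalisations over the spine cancel against $(\mrhow\mrhob)^{L'-1}$, use the geometric identity $\rhow(k+1)=p\,\rhow(k)$ to turn the deficient offspring count at $\uh_\infty$ back into the correct $\GWrr$-weight, and identify the constant via $\GWrr(\vert T\vert)=\GWr(\vert T\vert)=1/(1-\mrho)$ from Proposition~\ref{prop:LawJSTransform}. The only presentational difference is that you carry the components $\Tr_i$ for $i\neq 0$ through the joint-law computation, whereas the paper focuses on the law of $(\Tr_0,u_\infty^\leftarrow)$ and takes the independence and $\GWrr$-distribution of the other components as immediate from the construction of $\Tinfwb$.
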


\begin{proof}We focus on $\Tr_0$. Let $(\tr,u^*)$ be a pointed plane tree, and let $u^\circ$ be either the parent of $u^*$ in $\tr$ if $u^*\in \tb$, or $u^*$ itself otherwise. Then, $(\Tr_0,u_\infty^\leftarrow)=(\tr,u^*)$ enforces $\uh_\infty=u^\circ$. Since $k_{\uh_\infty}(\Tr_0)=k_{\uh_\infty}(\Tinfwb)-1$ and by definition of $\rhow$, we obtain
\begin{align*}P\left((\Tr_0,u_\infty^\leftarrow))=(\tr,u^*)\right)&=\prod_{u\in \tw \atop u \in [\emptyset,u^\circ) }{\frac{\bar{\rho}_\circ(k_u(\tr))}{k_u(\tr)}} \prod_{u\in \tb \atop u \in [\emptyset,u^\circ)}{\frac{\bar{\rho}_\bullet(k_u(\tr))}{k_u(\tr)}}\prod_{ u\in \tw \atop u \notin [\emptyset,u^\circ]  }{\rhow(k_u(\tr))} \prod_{u\in \tb \atop u \notin [\emptyset,u^\circ] }{\rhob(k_u(\tr))}\\
&\qquad\qquad\qquad\times \bar{\rho}_\circ(k_{u^\circ}(\tr)+1)\frac{1}{k_{u^\circ}(\tr)+1} (1-\mrhow\mrhob)(\mrhow\mrhob)^{\frac{\vert u^\circ \vert}{2}}\\
&=\frac{p(1-\mrhow\mrhob)}{\mrhow}\prod_{u\in \tw}{\rhow(k_u(\tr))} \prod_{u\in \tb}{\rhob(k_u(\tr))}=(1-\mrho)\GWrr(\tr).
\end{align*} We conclude by Proposition \ref{prop:LawJSTransform}, which gives $\GWrr(\vert T\vert)=\GWr(\vert T\vert)=1/(1-\mrho)$.\end{proof}

\subsection{Random infinite looptrees.}\label{sec:RandomInfiniteLooptrees} We now define infinite planar maps out of the infinite trees $\Tinfwb=\Tinfwb(\rhow,\rhob)$. 

\smallskip

\noindent\textit{The critical case.} When $(\rhow,\rhob)$ is critical, $\Tinfwb$ is a.s.\ locally finite. We extend the mapping $\Loop$ to $\tr\in\Tloc$ by defining $\Loop(\tr)$ as the consistent sequence of maps $(\Loop(\B_{2R}(\tr)) : R\geq 0 )$. This mapping is continuous on $\Tloc$ for the local topology. When $\tr$ is infinite and one-ended, $\Loop(\tr)$ is an infinite looptree, that is, an edge-outerplanar map whose root face is the unique infinite face. Then, the random infinite looptree $\Linf=\Linf(\rhow,\rhob)$ is defined by \[\Linf:=\Loop(\Tinfwb).\] See Figure \ref{fig:InfiniteLooptrees} for an illustration. Note that similar infinite looptrees also appear in \cite{bjornberg_random_2015}.

\smallskip

\noindent\textit{The subcritical case.} When $(\rhow,\rhob)$ is subcritical, $\Tinfwb$ has a unique vertex $u_\infty$ with infinite degree. Since $u_\infty$ has odd height, the sequence $(\B_r(\Loop(\Blr_R(\Tinfwb))) : R\geq 0)$ is eventually stationary, for every $r\geq 0$. Consequently, we define $\Linf=\Linf(\rhow,\rhob)$ as the local limit
\begin{equation}\label{eqn:DefinitionLinfSubcritical}
	\Linf:=\lim_{R\rightarrow \infty}\Loop(\Blr_R(\Tinfwb)).
\end{equation} Although $\Linf$ is not a looptree in the aforementioned sense, we keep the notation $\Linf=\Loop(\Tinfwb)$. The map $\Linf$ can also be obtained by gluing onto vertices $i\in \Z$ the independent looptrees $\Lt_i:=\Loop(\Tr_i)$, with $(\Tr_i:i\in \Z)$ as in Lemma \ref{lem:LawSizeBiasedGW} (see Figure \ref{fig:InfiniteLooptrees} for an illustration). From the above arguments, we get the following result.

\begin{figure}[ht]
	\centering
	\includegraphics[scale=.88]{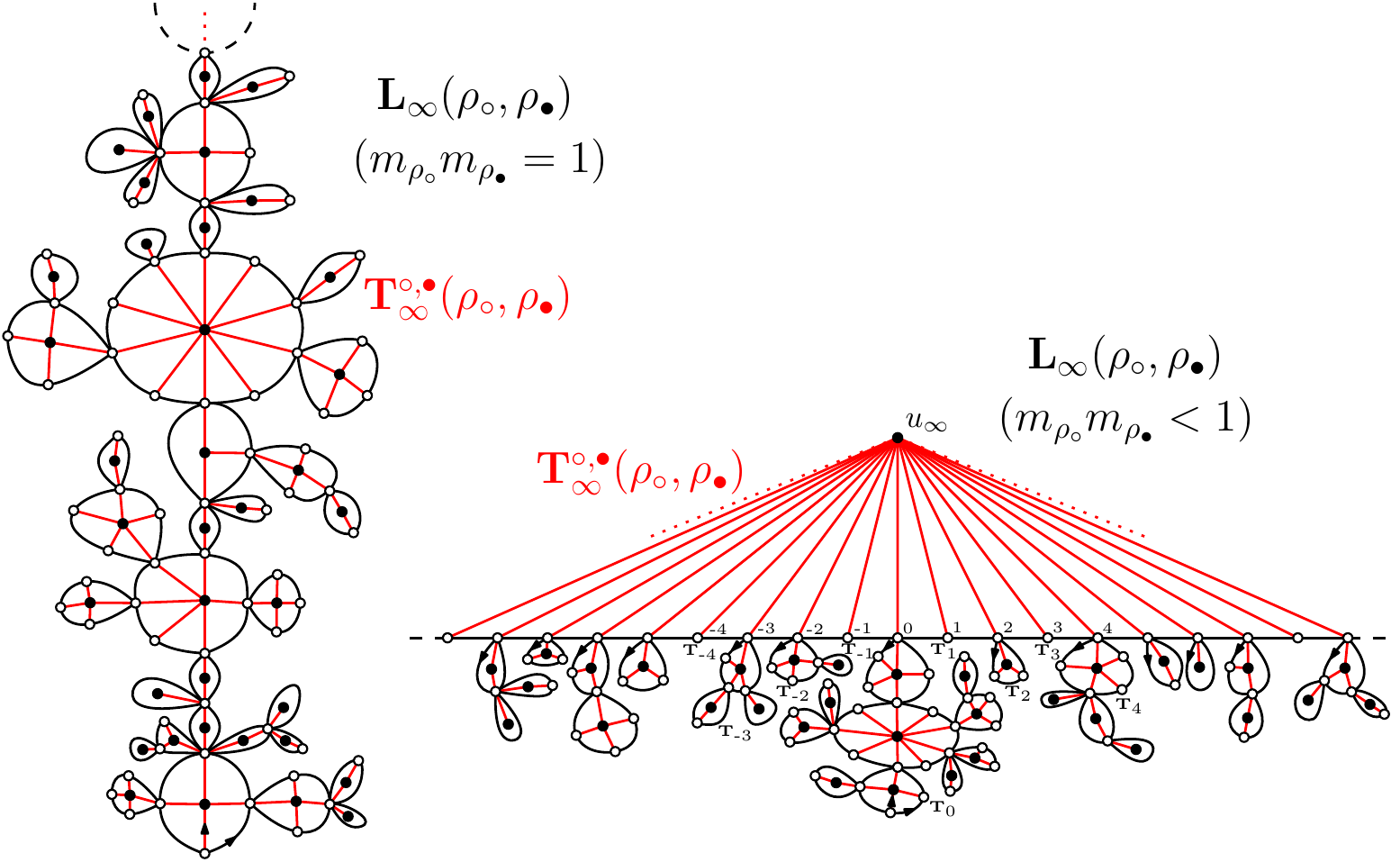}
	\caption{The infinite planar map $\Linf$ and the associated tree $\Tinfwb$.}
	\label{fig:InfiniteLooptrees}
	\end{figure}
	
\smallskip

\begin{Lem}\label{lem:ConvergenceLocalLooptrees} Let $(\rhow,\rhob)$ be a (sub)critical pair of offspring distributions such that $\rhow$ is geometric and $\rhob$ has no exponential moment. For every $k\geq 1$, let $\Twb_k$ be a tree with distribution $\GWrr^{(k)}$. Then, in the local sense,
	\[L_k:=\Loop(\Twb_k) \underset{k\rightarrow \infty}{\overset{(d)}{\longrightarrow}} \Linf(\rhow,\rhob).\]
\end{Lem}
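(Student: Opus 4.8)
The plan is to reduce everything to the convergence of finite balls. Since two finite rooted maps lie at $\dloc$-distance at most $1/(1+r)$ precisely when their balls of radius $r$ agree, and the set of possible balls is countable, it suffices to prove that $\B_r(L_k)$ converges in distribution to $\B_r(\Linf(\rhow,\rhob))$ for each fixed $r\geq 0$; this for all $r$ is equivalent to local convergence. In the critical case this is immediate: by Proposition \ref{prop:ConvergenceKestenTree} one has $\Twb_k\to\Tinfwb(\rhow,\rhob)$ in the local sense, with limit a.s.\ in $\Tloc$, and the map $\Loop$ is continuous on $\Tloc$ for the local topology, so the continuous mapping theorem yields $L_k=\Loop(\Twb_k)\to\Loop(\Tinfwb(\rhow,\rhob))=\Linf(\rhow,\rhob)$.

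The subcritical case is where the argument has content, since $\Loop$ is genuinely discontinuous for the local-$*$ topology: a white vertex with many children carries correspondingly many loops, so its degree in $\Loop(\tr)$ is not a function of any fixed left-right ball of $\tr$. The plan is to show this is the only obstruction, via the following deterministic statement: for every $r\geq 0$, the map $\tr\mapsto\B_r(\Loop(\tr))$ is continuous, for the local-$*$ topology, at every plane tree whose white vertices all have finite degree and whose vertices of infinite degree, if any, are black. Granting this, I conclude as follows. Proposition \ref{prop:ConvergenceJSTreeTwoType} applies (as $\rhow$ is geometric and $\rhob$ has no exponential moment) and gives $\Twb_k\to\Tinfwb(\rhow,\rhob)$ for the local-$*$ topology; the subcritical tree $\Tinfwb(\rhow,\rhob)$ has a.s.\ only finite-degree white vertices and a single vertex $u_\infty$ of infinite degree, which is black since it sits at odd height; hence $\tr\mapsto\B_r(\Loop(\tr))$ is a.s.\ continuous at $\Tinfwb(\rhow,\rhob)$ and the continuous mapping theorem gives $\B_r(\Loop(\Twb_k))\to\B_r(\Loop(\Tinfwb(\rhow,\rhob)))$ in distribution. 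Finally, this limit equals $\B_r(\Linf(\rhow,\rhob))$ by the definition \eqref{eqn:DefinitionLinfSubcritical} of $\Linf$ together with the eventual stationarity in $R$ of $\B_r(\Loop(\Blr_R(\Tinfwb(\rhow,\rhob))))$ recorded above.

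The heart of the proof, and the step I expect to be the main obstacle, is the deterministic continuity statement. I would prove it by fixing such a tree $\tr$ and a radius $r$, observing first that $\Loop(\tr)$ is then locally finite, so $\B_r(\Loop(\tr))$ is a finite map. Tracing through the construction of $\Loop$ shows that this ball is a function of a finite subtree $V_r\subseteq\tr$ consisting of the white vertices it contains, the finitely many black vertices incident to them, their ancestors, and, for each such black vertex $b$, either all of its children (when its degree is below some threshold depending on $r$) or merely its first and last few children. One then picks $R_0=R_0(\tr,r)<\infty$ exceeding the heights, the degrees of the white and low-degree black vertices, and the relevant first/last child indices occurring in $V_r$, and checks that $\Blr_{R_0}(\tr)$ records $V_r$ together with all the data needed to rebuild $\B_r(\Loop(\tr))$, so that any $\tr'$ with $\Blr_{R_0}(\tr')=\Blr_{R_0}(\tr)$ has the same distinguished subtree and the same ball. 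An alternative that avoids this case analysis, valid at least when $\rhob$ is supported on odd integers (as in the application to Boltzmann maps), is to use the identity $\Loop=\Loopbb\circ\JS$ — an extension to arbitrary plane trees of Lemma \ref{lem:ScoopAndLoop}, checked directly — combined with Propositions \ref{prop:LawJSTransform} and \ref{prop:ConvergenceJSTree}, which give $\JS(\Twb_k)\to\JS(\Tinfwb(\rhow,\rhob))$ for the local-$*$ topology; since $\Loopbb(\tr)$ has uniformly bounded degree and never glues several loops at one vertex, $\tr\mapsto\B_r(\Loopbb(\tr))$ is local-$*$-continuous with no restriction, and the continuous mapping theorem applies directly.
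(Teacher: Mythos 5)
Your primary argument is correct and tracks what the paper intends (the paper dispatches this lemma with the remark ``From the above arguments,'' referring precisely to the preceding stationarity observation in the subcritical case and the continuity of $\Loop$ on $\Tloc$ in the critical case). You spell out the missing deterministic step — that $\tr\mapsto\B_r(\Loop(\tr))$ is local-$*$-continuous at any tree whose vertices of infinite degree are black and whose white vertices all have finite degree — and your sketch of how to choose $R_0$ so that $\Blr_{R_0}(\tr)$ pins down $\B_r(\Loop(\tr))$ is sound: a black vertex of large (possibly infinite) degree contributes a long loop whose middle is automatically outside $\B_r$, so only its first and last $\Oc(r)$ children matter and they are visible in $\Blr_{R_0}$.

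However, your ``alternative that avoids the case analysis'' is flawed. You claim that $\Loopbb(\tr)$ has uniformly bounded vertex degrees, but the contraction in passing from $\Loopb(\tr)$ to $\Loopbb(\tr)$ merges entire maximal chains of last children into a single vertex. If $u_0,u_1,\dots,u_n$ is such a chain (each $u_{i+1}$ the last child of $u_i$), the merged vertex collects the non-contracted incidences of all the $u_i$ (left-sibling and first-child edges), giving it degree of order $n$, which is unbounded. This is consistent with $\Scoop(\m)=\Loopbb(\JS(\Treeb(\m)))$: $\Scoop(\m)$ has vertices of degree twice the number of times the boundary contour visits them, which is unbounded at pinch points. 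So $\tr\mapsto\B_r(\Loopbb(\tr))$ is \emph{not} local-$*$-continuous without restriction, and the ``continuous mapping theorem applies directly'' step of the alternative fails for the same reason $\Loop$ is discontinuous. Your main route does not suffer from this because it works with the original two-type tree, where the problematic vertices are exactly the black ones, and you isolate them explicitly. Keep the main argument, drop or repair the alternative.
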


Note that the internal faces of $\Linf=\Loop(\Tinfwb)$ are all finite in the critical case, while there is a unique infinite internal face in the subcritical case. 

\subsection{Local limits of Boltzmann maps with a boundary}\label{sec:LocalLimitsSubsection}

We now deal with the $\q$-$\IBHPM$ introduced in \eqref{eqn:LocalLimitBoltzmannMap}. This map, denoted by $\Minf=\Minf(\q)$, is a.s.\ one-ended with an infinite boundary (see \cite[Theorem 7]{curien_peeling_2016}).

The definitions of the scooped-out map and the irreducible components extend to any $\m\in\M_\infty$. We are now interested in the continuity of $\Scoop$ for the local topology.

\begin{Lem}\label{lem:ConvergenceBoundary} Let $(\m_k : k\in \N)$ be a sequence of  maps in $\M$, and $\m_\infty$ a one-ended infinite map such that $\m_k \rightarrow \m_\infty$ in the local sense, as $k\rightarrow \infty$. Then, in the local sense,
\[  \Scoop(\m_k) \underset{k\rightarrow \infty}{\longrightarrow}  \Scoop(\m_\infty) .\]
\end{Lem}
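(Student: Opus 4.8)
The plan is to reduce the statement to a purely local, combinatorial observation: the scooped-out map $\Scoop(\m)$ is built from the boundary $\partial\m$ by duplicating exactly those boundary edges whose two sides both belong to the root face, and whether a given edge has this property is a question that can be decided by inspecting a bounded neighbourhood of that edge. So first I would fix a radius $r\geq 0$ and ask which edges of $\Scoop(\m_\infty)$ lie in $\B_r(\Scoop(\m_\infty))$. Since $\m_\infty$ is one-ended with an infinite boundary, the ball of radius $r$ in $\Scoop(\m_\infty)$ is determined by a ball of some finite radius $r'=r'(r,\m_\infty)$ in $\m_\infty$ itself: indeed, the graph distance in $\Scoop(\m)$ dominates that in $\m$ (duplicating edges and discarding the interior only increases distances, up to the identification of the two copies of a duplicated edge), and conversely a path of length $r$ in $\Scoop(\m_\infty)$ stays within the boundary and hence within bounded $\m_\infty$-distance of the root. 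Thus there is a deterministic $r'$ such that $\B_r(\Scoop(\m))$ is a measurable function of $\B_{r'}(\m)$, uniformly over all maps $\m$ (finite or infinite) that agree with $\m_\infty$ on $\B_{r'}$.

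Next I would use the hypothesis $\m_k\to\m_\infty$: for $k$ large enough, $\B_{r'}(\m_k)=\B_{r'}(\m_\infty)$, and therefore $\B_r(\Scoop(\m_k))=\B_r(\Scoop(\m_\infty))$ by the previous paragraph. Since $r$ was arbitrary, this gives $\dloc(\Scoop(\m_k),\Scoop(\m_\infty))\to 0$, which is the claim. The argument is essentially the observation that $\Scoop$ is a ``local'' operation together with the comparison of the two graph metrics.

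The main obstacle is making rigorous the claim that $\B_r(\Scoop(\m))$ really is determined by a \emph{fixed-radius} ball of $\m$, uniformly in $\m$. Two points need care here. First, one must check that deciding whether a boundary edge of $\m$ is ``duplicable'' (both sides in the root face) can be read off from a bounded neighbourhood; this uses that $\m_\infty$ is one-ended, so that for $k$ large the relevant portion of $\partial\m_k$ and the way it bounds the root face are already visible inside $\B_{r'}(\m_\infty)$, and no ``surprise'' identification can occur far away. Second, one must handle the case where $r'$ a priori depends on $\m$: here one argues that since we only compare maps agreeing with $\m_\infty$ on a large ball, and $\m_\infty$ is fixed and one-ended, a single $r'=r'(r,\m_\infty)$ works for all sufficiently large $k$. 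Once these two points are settled the rest is immediate. I expect this lemma to be short; its role is to feed into the local limit theorem by transporting the convergence $M_k\to\Minf$ of \eqref{eqn:LocalLimitBoltzmannMap} to the level of scooped-out maps.
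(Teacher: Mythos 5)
You identify the right strategy---$\Scoop$ is a local operation, so one should transfer local convergence through it---and this is indeed the route the paper takes. You also correctly put your finger on the key obstacle: showing that $\B_r(\Scoop(\m))$ is a measurable function of a bounded-radius ball $\B_{r'}(\m)$. But your proposal does not actually establish this, and it is not automatic; it is precisely the technical content of the paper's proof.

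The difficulty is that whether an edge of $\m$ is a boundary edge (incident to the root face), and at which contour indices a given boundary vertex is visited, are not local data. A priori the contour of the root face of $\m_k$ could leave the ball $\B_{r'}(\m_k)$ and re-enter it far away in contour order at a vertex $v$ close to the origin; then $v$ would be a pinch point of $\Scoop(\m_k)$ whose local structure in the scooped-out map depends on information not visible in $\B_{r'}(\m_k)$. Your comparison of the two graph metrics rules this in the wrong direction: it only tells you that $\mathrm{V}(\B_r(\Scoop(\m)))\subseteq \mathrm{V}(\B_r(\m))$, which constrains the vertex set but says nothing about the adjacency or face structure of $\Scoop(\m)$ near the origin, which is exactly where the problem sits. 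The paper devotes the bulk of its proof to the claim that there exist $R$ and $K$ such that for all $k\geq K$ one has $\mathrm{V}(\B_r(\m_k))\cap\{v_k(l): |l|>R\}=\emptyset$, i.e.\ all boundary vertices seen in $\B_r(\m_k)$ already have contour index at most $R$. This is proved by contradiction: local convergence gives a uniform bound $M$ on the number of contour indices pointing into $\B_r(\m_k)$, so a vertex at large contour index would force two disjoint long contour excursions outside $\B_r$; passing to $\m_\infty$ and letting the excursion length $N\to\infty$ contradicts one-endedness. Only after this ``range'' lemma is it legitimate to write $\B_r(\Scoop(\m_k))=\B_r(H(\B_R(\m_k)))$ for a fixed measurable $H$ and conclude. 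Your phrases ``no surprise identification can occur far away'' and ``a single $r'$ works for all sufficiently large $k$'' gesture at this but supply no argument, so as written the proposal has a genuine gap at its central step. (Two smaller points: the paper first dispatches the easy case where $(\#\partial\m_k)$ stays bounded, reducing to $\#\partial\m_k\to\infty$; and the function $H$ must be chosen independently of $k$, which is what the uniform constants $R,K$ buy you.)
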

\begin{proof}First, if $(\#\Bm_k : k\geq 1)$ is bounded, there exists $R\geq 0$ such that for every $k\geq 1$, $\Bm_k\subseteq \B_R(\m_k)$ and the result follows. Thus, we can assume that $\#\Bm_k \rightarrow \infty$ as $k\rightarrow \infty$. 

For every $k\in \N\cup\{\infty\}$, let $p(k):=\#\Bm_k/2$ and denote by $(v_k(0),v_k(1),\ldots,v_k(p(k)))$ the vertices of the root face of $\m_k$ from the origin, in right contour order (with repetition). We use the notation $(v_k(0),v_k(-1),\ldots,v_k(-p(k)))$ for the left contour order. 

Let $r\geq 0$. We now prove that there exists $R\geq 0$ and $K\geq 1$ such that for every $k\geq K$,
\begin{equation}\label{eqn:RangeBoundary}
	\mathrm{V}(\B_r(\m_k))\cap\{v_{k}(l) : \vert l \vert >R\}=\emptyset.
\end{equation} We proceed by contradiction. By local convergence, the sequence $(\#\mathrm{V}(\B_r(\m_k)): k\geq 0)$ is bounded. Moreover, for every $v\in \mathrm{V}(\B_r(\m_k))$ we have 
\[\#\{ -p(k) \leq l \leq p(k) : v_{k}(l)=v \}\leq \deg_{\m_k}(v)\leq \sup_{u\in \mathrm{V}(B_r(\m_k))}{\deg_{\m_k}(u)},\] which is also bounded. Therefore, there exists $M\geq 0$ such that for every $k\geq 0$,
\[\#\{-p(k)\leq l \leq p(k) : v_k(l)\in \mathrm{V}(\B_r(\m_k))\}\leq M.\] Let $N\geq 0$. By assumption, there exists infinitely many $k$ such that $p(k) > 2M(N+2)$ and \[\mathrm{V}(\B_r(\m_k))\cap\{v_{\m_k}(l) : \vert l \vert >M(N+2)\}\neq\emptyset.\] As a consequence, in the cycle $(-p(k),\ldots,p(k))$, there exists two distinct sequences of consecutive indices $(i,\ldots,i+x)$ and $(j,\ldots,j+y)$ such that $x,y\geq N+2$ and
\[\mathrm{V}(\B_r(\m_k))\cap\{v_{k}(l) : i\leq l \leq i+x \}=\{v_{k}(i),v_{k}(i+x)\},\] and similarly for $(j,\ldots,j+y)$. In particular, the sets $E_1:=\{v_{k}(i+1),\ldots,v_{k}(i+x-1)\}$ and $E_2:=\{v_{k}(j+1),\ldots,v_{k}(j+y-1)\}$ are disjoint. Indeed, a vertex $v\in E_1\cap E_2$ would disconnect $\Scoop(\m_k)$ in two submaps each containing a vertex at distance less than $r$ from the origin, which is in contradiction with $v\notin \B_r(\m_k)$. Now, for every $-p(k)\leq i < p(k)$, $(v_k(i),v_k(i+1))$ is an edge of $\Scoop(\m_k)$. Therefore, $\{((v_{k}(l),v_{k}(l+1)) : i<l\leq i+N+1\}$ and $\{((v_{k}(l),v_{k}(l+1)) : j<l\leq j+N+1\}$ are disjoint sets of $N$ half-edges contained in $\B_{r+N}(\m_k)\backslash \B_r(\m_k)$. This holds for infinitely many $k\geq 1$, thus for $\m_\infty$. Since $\m_\infty$ has one end and $N$ is arbitrary, this is a contradiction.

Let us choose $R$ and $K$ such that assertion \eqref{eqn:RangeBoundary} holds for every $k\geq K$ (and thus for $\m_\infty$). For every $k\geq K$, let $\langle  v_{k}(-R),\ldots,v_{k}(R)\rangle$ be the sub-map induced by the $R$ first half-edges of $\Scoop(\m_k)$ in left and right contour order. We denote by $H$ the measurable function such that $\langle  v_{k}(-R),\ldots,v_{k}(R)\rangle=H(\m_k)=H(\B_R(\m_k))$. By \eqref{eqn:RangeBoundary}, we have for every $k\geq K$
\[\B_r(\Scoop(\m_k))=\B_r(H(\B_R(\m_k))\underset{k\rightarrow \infty}{\longrightarrow}\B_r(H(\B_R(\m_\infty))=\B_r(\Scoop(\m_\infty)),\] which concludes the proof.\end{proof}

Recall that when $\m\in \M_\infty$ has a unique infinite irreducible component, it is called the \textit{core} of $\m$, and denoted by $\Core(\m)$. We are now ready to prove Theorem \ref{th:LocalLimits}. 

\begin{proof}[Proof of Theorem \ref{th:LocalLimits}] For every $k\geq 1$, let $M_k$ be a map with law $\Pqk$. By Corollary \ref{cor:LawConditionedTreeComponents}, $\Twb_k:=\Treeb(M_k)$ has law $\GWnn^{(2k+1)}$. By \eqref{eqn:LocalLimitBoltzmannMap} and Lemmas \ref{lem:ConvergenceBoundary} and \ref{lem:ConvergenceLocalLooptrees}, we have  \[\Scoop(M_k) \underset{k\rightarrow \infty}{\overset{(d)}{\longrightarrow}} \Scoop(\Minf) \quad \text{and} \quad \Scoop(M_k)=\Loop(\Twb_k) \underset{k\rightarrow \infty}{\overset{(d)}{\longrightarrow}} \Linf(\nuw,\nub).\] Lemma \ref{lem:CriticityJSTree} and Proposition \ref{prop:PropertyNu} conclude the first part of the proof. For $a\in[3/2,2)$, $\Scoop(\Minf)$ has only finite internal faces, which are the boundaries of the irreducible components of $\Minf$. Since $\Scoop(\Minf)$ and $\Minf$ are one-ended, these components are necessarily finite. For $a\in(2,5/2]$, $\Scoop(\Minf)$ has a unique infinite internal face, which is the boundary of an infinite irreducible component. Since $\Minf$ is one-ended, the other components are finite, and $\Minf$ has a well-defined core. Moreover, $\Core(\Minf)$ is one-ended with an infinite simple boundary, and thus homeomorphic to the half-plane.\end{proof}

\medskip

\noindent\textbf{Local limits: the subcritical and dense regimes.} When $\q$ is of type $a\in[3/2,2)$, $\Minf$ can be built from the looptree $\Linf(\nuw,\nub)$ and a collection of independent Boltzmann maps. This generalizes \cite[Theorem 4]{baur_uniform_2016} which deals with subcritical quadrangulations. 

We define a fill-in mapping that associates to a one-ended tree $\tr\in\Tloc$ and a collection $(\ms_u : u\in \tb)$ of finite maps with a simple boundary of respective perimeter $\deg(u)$ the map 
\[\TC^{-1}\left(\tr,\left( \ms_u : u\in \tb \right)\right),\] obtained from $\lt:=\Loop(\tr)$ by gluing the map $\ms_u$ in the face of $\lt$ associated to $u$, for every $u\in\tb$. We keep the notation $\TC^{-1}$ by consistency, although we consider infinite trees. This mapping is continuous with respect to the natural topology.

\begin{Prop}\label{prop:LocalLimitSubcriticalDense} Let $\q$ be of type $a\in[3/2,2)$, and $\Tinfwb=\Tinfwb(\nuw,\nub)$. Conditionally on $\Tinfwb$, let $(\widehat{M}_u : u\in (\Tinfwb)_\bullet)$ be independent maps with a simple boundary and law $\widehat{\mathbb{P}}^{(\deg(u)/2)}_{\q}$. Then, the map \[M_\infty=\TC^{-1}\left(\Tinfwb,\left( \widehat{M}_u : u\in (\Tinfwb)_\bullet\right)\right)\] has the law of the $\q$-$\IBHPM$.
\end{Prop}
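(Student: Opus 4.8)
The strategy is to transfer the finite-$k$ decomposition of Corollary \ref{cor:LawConditionedTreeComponents} to the limit via the local convergences already in hand. Recall that for a map $M_k$ with law $\Pqk$, the bijection $\TC$ decomposes $M_k$ into the tree of components $\Treeb(M_k)$, which has law $\GWnn^{(2k+1)}$, and a family of conditionally independent maps with a simple boundary $(\widehat{M}_u : u\in\Treeb(M_k)_\bullet)$ with the prescribed laws $\widehat{\mathbb P}^{(\deg(u)/2)}_\q$. Equivalently, $M_k = \TC^{-1}(\Treeb(M_k),(\widehat{M}_u))$. On the other side, by \eqref{eqn:LocalLimitBoltzmannMap} (Curien's theorem), $M_k \to \Minf$ in the local sense, so it suffices to show that the right-hand side $\TC^{-1}(\Treeb(M_k),(\widehat M_u))$ converges in distribution, in the local sense, to $\TC^{-1}(\Tinfwb,(\widehat M_u))$ with the independent simple-boundary maps as described; uniqueness of the local limit then gives the equality in law.

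The key steps, in order, are as follows. First, by Lemma \ref{lem:ConvergenceLocalLooptrees} (using that $\nuw$ is geometric by Proposition \ref{prop:LawTreeComponents} and $\nub$ has no exponential moment for $a\in[3/2,2)$ by Proposition \ref{prop:PropertyNu}, and that $(\nuw,\nub)$ is critical by Lemma \ref{lem:CriticityJSTree}), we have $\Loop(\Treeb(M_k)) \to \Loop(\Tinfwb) = \Linf(\nuw,\nub)$ in the local sense. The underlying statement, which I would extract from the proof of that lemma, is that $\Treeb(M_k)$ converges to $\Tinfwb$ in the local topology on $\Tloc$ (this holds in the critical regime where $\Tinfwb$ is a.s.\ locally finite and one-ended). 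Second, I would upgrade this to joint convergence of the tree together with the decorations: conditionally on the trees, the maps $(\widehat M_u)$ in the prelimit are independent with law $\widehat{\mathbb P}^{(\deg(u)/2)}_\q$, and the same recipe applied to the limiting tree $\Tinfwb$ produces the claimed decorated object. Because a ball of radius $r$ in $\TC^{-1}(\tr,(\ms_u))$ depends only on a ball of fixed radius in $\tr$ together with finitely many of the $\ms_u$ (those whose perimeter vertices are within that radius), and because these decorations are drawn independently from the same kernel, the joint law of (ball of the tree, relevant decorations) converges; hence the fill-in mapping $\TC^{-1}$, which I would observe is continuous for the appropriate product-of-local topologies, carries the convergence through. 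Third, identifying the two limits: $\TC^{-1}(\Treeb(M_k),(\widehat M_u)) = M_k \overset{(d)}{\to} \Minf$, while the same quantity converges to $\TC^{-1}(\Tinfwb,(\widehat M_u))$; therefore $\Minf \overset{(d)}{=} \TC^{-1}(\Tinfwb,(\widehat M_u))$.

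The main obstacle I anticipate is the measure-theoretic bookkeeping in the second step — specifically, justifying convergence of the joint law of the decorated structures rather than the tree alone. One must check that the conditional independence structure passes to the limit: given $\Blr_R(\Treeb(M_k)) = \tr$, the black vertices appearing in this ball have decorations that are i.i.d.\ draws (across vertices) from the kernel $k \mapsto \widehat{\mathbb P}^{(k)}_\q$, and this kernel does not depend on $k$, so the only thing moving is the law of the tree ball, which already converges. A clean way to package this is to work on the product space $\Tloc \times \prod \M$, note that $\TC^{-1}$ is continuous there, verify that the finite-dimensional decorated balls converge (reducing to convergence of $\Blr_R(\Treeb(M_k))$ plus the fixed decoration kernel), and invoke the continuous mapping theorem; the one-endedness of $\Tinfwb$ in the critical case, together with the stationarity of balls under the local-$*$ pruning, ensures that no decoration "escapes to infinity." I would also remark that, as in Lemma \ref{lem:ConvergenceBoundary}, the boundary/scooped-out structure behaves continuously, so there is no ambiguity in how the simple-boundary maps are glued in the limit.
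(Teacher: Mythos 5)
Your proposal is correct and follows essentially the same route as the paper. The paper implements the "measure-theoretic bookkeeping" you describe via a pruning operation $\Cut_R$ (keeping vertices of the tree at height $\leq 2R$ and the corresponding decorations), writing $\Pqk(\Cut_R(M)=\m)$ as a product of the tree-ball probability $\GWnn^{(2k+1)}(\Cut_R(T)=\tr)$ times the fixed decoration factors $\prod_{u\in\tb}\widehat{\mathbb{P}}^{(\deg(u)/2)}_\q(\ms_u)$, letting $k\to\infty$ via Proposition \ref{prop:ConvergenceKestenTree}, and concluding from the observation $\B_R(\m)=\B_R(\Cut_R(\m))$ — precisely the finite-dimensional argument you sketch, just packaged a bit more directly (the paper invokes Stephenson's theorem rather than routing through Lemma \ref{lem:ConvergenceLocalLooptrees}).
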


\begin{proof}The proof closely follows that of \cite[Theorem 4]{baur_uniform_2016}. For every $\tr \in \Tloc$ and every $R\geq 1$, let $\Cut_R(\tr)$ be the subtree of $\tr$ made of vertices $u\in\tr$ such that $\vert u \vert \leq 2R$. Consistently, if $\m=\TC^{-1}(\tr,(\ms_u : u\in \tb))$, $\Cut_R(\m)$ is the map $\TC^{-1}(\Cut_R(\tr),(\ms_u : u\in \Cut_R(\tr)_\bullet))$.

Let $R\geq 1$ and for every $k\geq 0$, let $M_k$ be a map with law $\Pqk$. Let $\m\in\M$ and $(\tr,(\ms_u : u\in \tb))=\TC(\m)$. By Proposition \ref{prop:LawTreeComponents} and \ref{prop:ConvergenceKestenTree}, we have \begin{align*}
	\Pqk(\Cut_R(M)=\m)=& \ \GWnn^{(2k+1)}(\Cut_R(T)=\tr)\prod_{u\in \tb}{\widehat{\mathbb{P}}^{(\deg(u)/2)}_\q}(\ms_u)\\
	\underset{k\rightarrow \infty}{\longrightarrow}& \ P(\Cut_R(\Tinfwb)=\tr)\prod_{u\in \tb}{\widehat{\mathbb{P}}^{(\deg(u)/2)}_\q}(\ms_u)=P(\Cut_R(M_\infty)=\m).
\end{align*} This concludes since $\B_R(\m)=\B_R(\Cut_R(\m))$ if $\m=\TC^{-1}(\tr,(\ms_u : u\in \tb))$ with $\tr\in\Tloc$.\end{proof}

\medskip

\noindent\textbf{Local limits: the dilute and generic regimes.} When $\q$ is of type $a\in(2,5/2]$, $\Minf$ cannot be fully described using finite maps. We believe that the finite irreducible components of $\Minf$ are independent Boltzmann maps with a simple boundary (conditionally on $\Scoop(\Minf)$). Moreover, we conjecture that there exists a distribution $\widehat{\mathbb{P}}_\q^{(\infty)}$ on one-ended maps with an infinite simple boundary such that $\Pqks \Rightarrow \widehat{\mathbb{P}}_\q^{(\infty)}$ as $k\rightarrow \infty$, and that $\Core(\Minf)$ has law~$\widehat{\mathbb{P}}_\q^{(\infty)}$. This result is proved for quadrangulations in \cite[Proposition 6]{curien_uniform_2015}, but relies on enumeration results for quadrangulations with a simple boundary that are unknown for general maps.

\section{The non-generic critical case with parameter $\alpha=3/2$}\label{sec:TheA2Case}

We now deal with the parameter $\alpha=3/2$ ($a=2$) that has been excluded so far. The results of Section \ref{sec:BoltzmannLaws} still hold by considering $a=2$ as part of the dense regime if $F'(\rq)=\infty$, and of the dilute regime if $F'(\rq)<\infty$. For instance, the proofs of Propositions \ref{prop:SingularExpansionF} and \ref{prop:SingularExpansionFs} can be slightly adapted. For the former, we use \cite[Proposition 1.5.9 a-b]{bingham_regular_1989} to check that the assumption \cite[Equation (8.1.11 a-c)]{bingham_regular_1989} of Karamata's theorem~\cite[Theorem 8.1.6]{bingham_regular_1989} is satisfied. For the latter, when $F'(\rq)=\infty$, we use the so-called de Bruijn conjugate of a slowly varying function \cite[Theorem 1.5.13]{bingham_regular_1989} instead of Lemma \ref{lem:RegularlyVaryingInversion}. The issue comes from Proposition \ref{prop:PropertyNu}, because in this case Karamata's theorem merely provides information on the tail of the size-biased version of $\nu$, see \cite[Equation (8.1.11 a-c)]{bingham_regular_1989}. We now bypass this difficulty by using a special weight sequence introduced in \cite{ambjorn_generalized_2016}, and by calling on de Haan theory \cite[Chapter 3]{bingham_regular_1989}. Let us start with a general statement regarding the criticality of the tree of components that is a consequence of \eqref{eqn:AsymptoticsFk} and \eqref{eqn:MeanNu}.

\begin{Lem}
	Let $\q$ be a weight sequence of type $a=2$. Then, $\nu$ is critical if and only if 
	\[\sum_{k=1}^\infty \frac{\ell(k)}{k}=\infty.\] 
\end{Lem}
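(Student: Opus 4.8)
\emph{Plan.} The strategy is to combine the two ingredients quoted just before the statement: the singular asymptotics \eqref{eqn:AsymptoticsFk} of the partition function $F_k$ and the formula \eqref{eqn:MeanNu} for the mean $\mnu$ of $\nu$, both of which remain valid for the boundary type $a=2$ (that is, $\alpha=3/2$). First I would record that, applying \eqref{eqn:AsymptoticsFk} with $\alpha=3/2$, one has $F_k\rq^k\sim c\,\ell(k)/k^2$ as $k\to\infty$ for some constant $c>0$; since $\ell$ is slowly varying, $\sum_k\ell(k)/k^2<\infty$, so $F(\rq)=\sum_{k\geq 0}F_k\rq^k$ is finite (and $F(\rq)\geq F_0=1>0$). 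Plugging $0<F(\rq)<\infty$ and $F'(\rq)>0$ into \eqref{eqn:MeanNu} gives $\mnu=\bigl(1+F(\rq)/(2\rq F'(\rq))\bigr)^{-1}\leq 1$, with equality — i.e.\ $\nu$ critical — if and only if $F'(\rq)=+\infty$, where $F'(\rq):=\lim_{x\to\rq^-}F'(x)\in(0,+\infty]$.

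It then remains to translate the condition $F'(\rq)=+\infty$ into a condition on $\ell$. Since the power series $F'(x)=\sum_{k\geq 1}kF_kx^{k-1}$ has nonnegative coefficients, monotone convergence yields
\[\rq F'(\rq)=\sum_{k\geq 1}kF_k\rq^{k}\in(0,+\infty].\]
Using \eqref{eqn:AsymptoticsFk} once more, $kF_k\rq^{k}\sim c\,\ell(k)/k$, so the series on the right-hand side converges if and only if $\sum_{k\geq 1}\ell(k)/k<\infty$. Combining the two paragraphs gives exactly the claim; by Proposition \ref{prop:LawJSTransform} the same dichotomy transfers to the pair $(\nuw,\nub)$.

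The argument is essentially bookkeeping once the two quoted facts are in place, so I do not expect a genuine obstacle. The only points deserving care are (i) the extension of \eqref{eqn:AsymptoticsFk} and \eqref{eqn:MeanNu} to $a=2$, which is granted by the discussion preceding the statement, and (ii) the (standard) identification of the radial limit $\lim_{x\to\rq^-}F'(x)$ with the series $\sum_{k\geq 1}kF_k\rq^{k-1}$, which is legitimate precisely because every coefficient $F_k$ is nonnegative.
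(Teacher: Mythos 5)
Your proof is correct and follows exactly the route the paper sketches — it is advertised as a direct consequence of \eqref{eqn:AsymptoticsFk} and \eqref{eqn:MeanNu}, and you fill in precisely that bookkeeping (radial limit of $F'$ at $\rq$ equals the coefficient series by monotone convergence, then the tail $kF_k\rq^k\sim c\,\ell(k)/k$ gives the dichotomy). No gap.
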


The special weight sequence $\q^*=(q^*_k : k\in \N)$ introduced in \cite{ambjorn_generalized_2016} (see also \cite[Section 5]{budd_geometry_2017}) is defined by \begin{equation}\label{eqn:Qstar}
	q^*_k:=\frac{1}{4 }6^{1-k} \frac{\Gamma(k-3/2)}{\Gamma(k+5/2)}\mathbf{1}_{k\geq 2} \quad k\in \N.
\end{equation} The sequence $\q^*$ is admissible, critical, and of type $a=2$. We will prove the following. 

\begin{Prop}\label{prop:SpecialCaseA2}
	Let $\q^*$ be the sequence defined by \eqref{eqn:Qstar}. Then, $(\nuw,\nub)$ is critical, 
\[\nu([k,\infty))\underset{k\rightarrow \infty}{\sim} \frac{1}{k\log^2(k)}, \quad \text{and} \quad \nub([k,\infty))\underset{k\rightarrow \infty}{\sim} \frac{3}{k\log^2(k)}.\] In particular, $\nu$ and $\nub$ are in the domain of attraction of a Cauchy distribution.
\end{Prop}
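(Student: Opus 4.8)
The plan is to compute the singular expansion of the generating function $F$ associated to $\q^*$ explicitly (or at least precisely enough to pin down the slowly varying correction), then feed this into Lemma~\ref{lem:RelationFFs} and the formulas \eqref{eqn:GFnu}--\eqref{eqn:GFnub} to extract the asymptotics of $\nu$ and $\nub$ via a Tauberian argument. The special sequence $\q^*$ of \eqref{eqn:Qstar} is engineered precisely so that the relevant generating functions are amenable to closed-form computation: as noted in \cite{ambjorn_generalized_2016,budd_geometry_2017}, one can identify $\Zq$, $F(\rq)$, $\rq F'(\rq)$, and the behaviour of $F(x)$ near $\rq$. So the first step is to recall these explicit values and determine the precise form of $F(x)$ as $x\to\rq^-$; since $\q^*$ is of type $a=2$ and critical, we are in the borderline case where Proposition~\ref{prop:SingularExpansionF} degenerates, and the Lemma preceding Proposition~\ref{prop:SpecialCaseA2} (with the divergence $\sum \ell(k)/k=\infty$) tells us $\ell(k)$ decays like $1/\log^2 k$ up to constants. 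Concretely I expect $F(x)=F(\rq)-c\,(1-x/\rq)/\log(1/(1-x/\rq))\,(1+o(1))$ for an explicit constant $c$, which is the $a=2$ analogue of Proposition~\ref{prop:SingularExpansionF} with $\ell_1(y)\sim \mathrm{const}/\log y$.

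Next I would invert the relation $F(x)=\Fs(xF^2(x))$ exactly as in the proof of Proposition~\ref{prop:SingularExpansionFs}, but now in the $a=2$ regime: set $P(x)=xF^2(x)$, write the expansion of $P$ near $\rq$, and invert it. Since $F'(\rq)<\infty$ here (as $\q^*$ lies, per Section~\ref{sec:TheA2Case}, in the "dilute-like" part of the $a=2$ case — this needs to be checked for $\q^*$ specifically), the inversion is of the type leading to \eqref{eqn:AsymptoticInversePDilute}, and I obtain $P^{-1}(y)$ with a singular correction of order $(P(\rq)-y)/\log(1/(P(\rq)-y))$. Then $\Fs(y)=\sqrt{y/P^{-1}(y)}$ gives the singular expansion of $\Fs$ near $\rsq=\rq F^2(\rq)$, and via \eqref{eqn:GFnu}, \eqref{eqn:GFnub}, and $G_\nu(s)=\Fs(\rq F^2(\rq)s^2)/F(\rq)$, the singular expansions of $G_\nu$ and $G_{\nub}$ near $s=1$: each will have the form $1-m(1-s)+C(1-s)/\log(1/(1-s))\,(1+o(1))$. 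Criticality of $(\nuw,\nub)$ follows from the divergence criterion in the preceding Lemma, or directly from $\mnu=1$, which is \eqref{eqn:MeanNu} together with $F'(\rq)$ finite — this should come out of the explicit computation.

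Finally, to pass from the singular expansion of the generating function to the tail asymptotics $\nu([k,\infty))\sim 1/(k\log^2 k)$ I would apply Karamata's Tauberian theorem for power series \cite[Corollary 1.7.3 / Theorem 8.1.6]{bingham_regular_1989}: the coefficient of $(1-s)^1$ being present means we must first subtract the mean, i.e.\ look at $G'_\nu(1)-G'_\nu(s)=\sum_k \nu([k+1,\infty)) s^k \cdot(\text{signs})$ — more precisely use the standard fact that $\mnu - G'_\nu(s) \sim C(1-s)/\log(1/(1-s))$ as $s\to1^-$ corresponds, by the Tauberian theorem applied to the monotone sequence $(\sum_{j>k}\nu([j,\infty)))_k$ after one more Karamata integration, to $\nu([k,\infty))\sim C'/(k\log^2 k)$, and then track the constant $C'$ through to get exactly $1$ (resp.\ $3$ for $\nub$, the factor $F(\rq)/(F(\rq)-1)$ appearing as in \eqref{eqn:ConclusionNu3}). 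The de Haan theory reference in Section~\ref{sec:TheA2Case} is the tool that legitimizes handling the $\log^2$ correction rather than a pure power. A Cauchy ($\alpha=1$) stable law is the domain-of-attraction conclusion since the tail exponent is exactly $1$ with a slowly varying correction $1/\log^2 k$ (this is a classical criterion, e.g.\ \cite[Chapter 8]{bingham_regular_1989}).

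The main obstacle I anticipate is the exact constant-chasing in the last step: the $a=2$ Tauberian passage involves an extra Karamata integration because the leading singular term is order $(1-s)$ (not a non-integer power), so one must be careful that the $1/\log^2$ rather than $1/\log$ power emerges correctly and with constant exactly $1$. Getting the closed-form values for $\q^*$ — $\Zq$, $F(\rq)$, and especially the constant $c$ in front of the $(1-x/\rq)/\log$ term — from \eqref{eqn:Qstar} is the other computational bottleneck; this is where the specific arithmetic of the $\Gamma(k-3/2)/\Gamma(k+5/2)$ weights (chosen in \cite{ambjorn_generalized_2016} precisely to make $\fq$ hypergeometric-summable) does the work, and I would lean on the computations already recorded in \cite{budd_geometry_2017}.
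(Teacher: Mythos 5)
Your proposal follows the same high-level strategy as the paper: exploit the explicit formulas for $\q^*$ from \cite{ambjorn_generalized_2016,budd_geometry_2017}, invert the relation $F=\Fs\circ P$ of Lemma~\ref{lem:RelationFFs}, and finish with a Tauberian and de Haan argument. Your identification of the Cauchy domain of attraction at the end is also correct. However, two steps in your plan are concretely wrong and would derail the execution.

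First, you guess that $F(x)=F(\rq)-c\,(1-x/\rq)/\log(1/(1-x/\rq))\,(1+o(1))$, with the logarithmic correction \emph{dividing}. For $\q^*$ one has $F_k\rq^k\sim C/k^2$, and a direct estimate of $F(\rq)-F(x)=\sum_k F_k\rq^k\bigl(1-(x/\rq)^k\bigr)$ shows the singular term is of order $(1-x/\rq)\log(1/(1-x/\rq))$, i.e.\ the log \emph{multiplies}; this is what \eqref{eqn:A2ExpansionF} records. It is only after inverting $P$ in \eqref{eqn:A2ExpansionPInverse} that the log moves to the denominator, so your guessed form for $G_\nu$ happens to be qualitatively right, but via an incorrect route.

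Second, and more seriously, you take $F'(\rq)<\infty$ for $\q^*$ and plan to invert $P$ using the dilute-type expansion \eqref{eqn:AsymptoticInversePDilute}. In fact the $(1-x/\rq)\log(1-x/\rq)$ singularity has infinite slope, so $F'(\rq)=\infty$ (see \eqref{eqn:A2ExpansionDF}; this is also forced by $\mnu=1$ via \eqref{eqn:MeanNu} and by the criterion $\sum_k\ell(k)/k=\infty$ of the Lemma preceding the statement). Consequently $C_\q=\infty$ and \eqref{eqn:AsymptoticInversePDilute} degenerates. The paper instead inverts $P$ using the lower branch of the Lambert $W$ function, adapted to the $x\log x$-type singularity, which produces both the leading $(P(\rq)-y)/\log(\cdot)$ term and the secondary $\log(-\log(\cdot))/\log^2(\cdot)$ correction needed to extract the constants $1$ and $3$. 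Relatedly, the paper notes that at $a=2$ an expansion of $\Fs$ alone is insufficient and one must expand $\Fs'$ (equivalently, work with $-L_\nu'=L_{\bar\nu}$) before invoking de Haan's Tauberian theorem and an integration by parts; this is essentially the ``extra Karamata integration'' you gesture at but do not carry out. Without the Lambert $W$ device and the derivative-level expansion, the inversion step will not yield the exact constants.
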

\begin{Rk}When $a=2$, $(\nuw,\nub)$ can either be subcritical or critical. In the critical case (which includes the standard setting of \cite{le_gall_scaling_2011,borot_recursive_2012} and the sequence $\q^*$), the results of Theorem \ref{th:LocalLimits} and Proposition \ref{prop:LocalLimitSubcriticalDense} hold. However, Proposition \ref{prop:SpecialCaseA2} suggests that $\nub$ has a very heavy tail, meaning that $\partial\Minf$ has very large loops. In the subcritical case, Theorem \ref{th:LocalLimits} also holds provided that $\nub$ has no exponential moment. This can be proved by using the analogue of Proposition \ref{prop:SingularExpansionFs} to ensure that $\rsq=\rq F^2(\rq)$ (in this case, the slowly varying correction vanishes at infinity and the expansion is singular). Finally, we expect the scaling limit of the boundary to be a circle, but the normalizing sequence to be negligible compared to the perimeter $2k$ of the map (typically of order $k/\log(k)$).	
\end{Rk}

\medskip

\noindent\textbf{Enumerative results.} The sequence $\q^*$ is convenient because we have an explicit formula for the partition functions $(F_k : k\geq 0)$ by \cite[Lemma 14]{budd_geometry_2017} and \cite[Equation (7)]{budd_geometry_2017}:
\[F_k=\frac{3}{4} \frac{6^{k}}{(k+3/2)(k+1/2)}, \quad \text{and} \quad F(x)=\frac{1}{4x}-\frac{3}{4(6x)^{3/2}}(1-6x)\log\left( \frac{1+\sqrt{6x}}{1-\sqrt{6x}} \right).\] Consequently, $\rq=1/6$ and we deduce the asymptotic expansions as $x \rightarrow \rq^-$
\begin{align}\label{eqn:A2ExpansionF}
	F(x)&=\frac{3}{2}+\frac{3}{4}\left(1-\frac{x}{\rq}\right)\log\left(1-\frac{x}{\rq}\right)+\frac{3}{2}\left(1-\log(2)\right)\left(1-\frac{x}{\rq}\right)(1+o(1)),\\
	F'(x)&=-\frac{9}{2}(3-2\log(2))-\frac{9}{2}\log\left(1-\frac{x}{\rq}\right)+o(1).\label{eqn:A2ExpansionDF}
\end{align} Unlike the previous cases, an expansion of $\Fs$ is not sufficient; we rather need an expansion of its derivative. The function $P(x)=xF^2(x)$ is again continuous increasing with inverse $P^{-1}$. Moreover, we have as $x\rightarrow \rq^-$
\begin{equation}\label{eqn:A2ExpansionP}
	P(x)=P(\rq)+P(\rq)\left(1-\frac{x}{\rq}\right)\log\left(1-\frac{x}{\rq}\right)+P(\rq)(2\log(2)-1)\left(1-\frac{x}{\rq}\right)(1+o(1)).
\end{equation} We put $c^*:=2\log(2)-1$. Let us define the function $R$ and its inverse $R^{-1}$ both on $[0,1]$ by
\begin{equation}\label{eqn:A2InverseR}
	R(x):=\frac{1}{P(\rq)}\left(P(\rq)-P(\rq(1-x)) \right) \quad \text{and} \quad R^{-1}(y)=1-\frac{1}{\rq}P^{-1}\left(P(\rq)(1-y)\right).
\end{equation} The expansion of $R$ reads $R(x)=-x\log(x)-c^*x+o(x)$, as $x\rightarrow 1^-$. We now need the Lambert $W$ function, which is the multivalued inverse of $x\mapsto x e^x$. We use the lower branch $\Wm$, continuous decreasing from $[-1/e,0)$ onto $(-\infty,-1]$, which satisfies \begin{equation}\label{eqn:LambertW}
	\Wm(-x)=\log\left( \frac{-x}{\Wm(-x)} \right) \quad \text{and} \quad \Wm(x\log(x))=\log(x), \quad x\in (0,1/e]. 
\end{equation} We also have $\Wm(-x)=\log(x)-\log(-\log(x))+o(1)$ as $x\rightarrow 0^+$. We introduce the function
\[Q(x):=R\left( \frac{-x}{\Wm(-x)} \right), \quad x\in(0,1/e],\] which is continuous increasing. By \eqref{eqn:LambertW}, its inverse function $Q^{-1}$ satisfies 
\begin{equation}\label{eqn:A2FormulaInverseQ}
	Q^{-1}(y)=-R^{-1}(y)\log\left(R^{-1}(y)\right) \quad \text{and} \quad R^{-1}(y)=\frac{-Q^{-1}(y)}{\Wm(-Q^{-1}(y))}, \quad y\in(0,R(1/e)].
\end{equation} Using the above expansions, we get as $x,y \rightarrow 0^+$
\begin{equation}\label{eqn:A2ExpansionQ}
	Q(x)=x-c^*\frac{x}{\log(x)}+o\left(\frac{x}{\log(x)}\right) \quad \text{and} \quad Q^{-1}(y)=y-c^*\frac{y}{\log(y)}+o\left(\frac{
	y}{\log(y)}\right).
\end{equation} Together with \eqref{eqn:A2FormulaInverseQ} and the expansion of $W_{-1}$, this yields 
\begin{equation}\label{eqn:A2ExpansionRInverse}
	R^{-1}(y)=-\frac{y}{\log(y)}-\frac{y\log(-\log(y))}{\log^2(y)}-c^*\frac{y}{\log^2(y)}+o\left(\frac{y}{\log^2(y)}\right) \quad \text{as } y\rightarrow 0^+.
\end{equation} Finally, by \eqref{eqn:A2InverseR} we obtain
\begin{multline}\label{eqn:A2ExpansionPInverse}
	P^{-1}(y)=\rq +\rq\left(1-\frac{y}{P(\rq)}\right)\frac{1}{\log\left(1-\frac{y}{P(\rq)}\right)}+\rq\left(1-\frac{y}{P(\rq)}\right)\frac{\log\left(-\log\left(1-\frac{y}{P(\rq)}\right)\right)}{\log^2\left(1-\frac{y}{P(\rq)}\right)}\\
	+\rq c^* \left(1-\frac{y}{P(\rq)}\right)\frac{1}{\log^2\left(1-\frac{y}{P(\rq)}\right)}(1+o(1)) \quad \text{as } y\rightarrow P(\rq)^-.
\end{multline} This proves that $\rsq=P(\rq)$. By differentiating both sides in the equation of Lemma \ref{lem:RelationFFs} and using \eqref{eqn:A2ExpansionF}, \eqref{eqn:A2ExpansionDF} and \eqref{eqn:A2ExpansionPInverse} we obtain the wanted expansion of $\Fs'$: as $y\rightarrow P(\rq)^-$,
\begin{equation}\label{eqn:A2ExpansionFsD}
	\Fs'(y)=2+ \frac{2}{\log\left(1-\frac{y}{P(\rq)}\right)}+\frac{2\log\left(-\log\left(1-\frac{y}{P(\rq)}\right)\right)}{\log^2\left(1-\frac{y}{P(\rq)}\right)}-\frac{2(3-2\log(2))}{\log^2\left(1-\frac{y}{P(\rq)}\right)}(1+o(1)).
\end{equation}

\medskip

\noindent\textbf{The tree of components.} We are now interested in properties of the tails of $\nu$ and $\nub$. To do so, we need estimates on the derivative of $L_\nu$. Recalling \eqref{eqn:GFnu} and \eqref{eqn:A2ExpansionFsD}, we obtain
\begin{equation}\label{eqn:A2ExpansionLnuD}
	-L_\nu'(t)=1+ \frac{1}{\log(2t)}+\frac{\log\left(-\log\left(2t\right)\right)}{\log^2\left(2t\right)}-\frac{3-2\log(2)}{\log^2\left(2t\right)}+o\left(\frac{1}{\log^2\left(t\right)}\right), \quad \text{as } t\rightarrow 0^+.
\end{equation} Since $\nu$ is critical, the Laplace transform $L_{\bar{\nu}}$ of the size-biased measure $\bar{\nu}$ equals $-L_\nu'$. Thus, 
\begin{equation}\label{eqn:A2LimitLaplaceNuBar}
	\frac{L_{\bar{\nu}}\left( \frac{1}{\lambda x} \right)-L_{\bar{\nu}}\left( \frac{1}{ x} \right)}{\log^2(x)} \underset{x \rightarrow \infty}{\longrightarrow}\log(\lambda), \quad \forall \ \lambda>0.
\end{equation} Let us introduce a notation for the tail of $\bar{\nu}$, say $T(x):=\sum_{k\geq x}{k\nu(k)}$ for $x\in \R$. By de Haan's Tauberian theorem \cite[Theorem 3.9.1]{bingham_regular_1989}, \eqref{eqn:A2LimitLaplaceNuBar} is equivalent to 
\begin{equation}\label{eqn:A2LimitT}
	\frac{T(\lambda x) -T(x)}{\log^2(x)} \underset{x \rightarrow \infty}{\longrightarrow}\log(\lambda), \quad \forall \ \lambda>0.
\end{equation} The function $T$ is said to be in the class $\Pi_{\log^2}$ with index $1$. By an integration by parts,
\begin{equation}\label{eqn:A2IPP}
	x\nu((x,\infty))=T(x)-x\int_{x}^\infty{\frac{T(t)}{t^2}\textup{d}t}, \quad x>0.
\end{equation} This finally proves Proposition \ref{prop:SpecialCaseA2} by de Haan's Theorem \cite[Theorem 3.7.3]{bingham_regular_1989}, \eqref{eqn:A2LimitT} and \eqref{eqn:A2IPP}.

\begin{Ack}Many thanks to Grégory Miermont for enlightening discussions and for attentively reading this work. I would also like to thank warmly Erich Baur, Jérémie Bouttier, Timothy Budd, Nicolas Curien and Igor Kortchemski for very useful discussions and comments. Finally, I am deeply indebted to the anonymous referees for their careful rereading, crucial corrections and remarks.
\end{Ack}

\end{document}